\documentclass[11pt]{gsm-l}

\usepackage{extsizes}

\usepackage{tikz}
\usepackage{rotating}

\usepackage{enumitem}
\setlist[itemize]{topsep=0pt,itemsep=0pt}
\setlist[enumerate]{topsep=0pt,itemsep=0pt}

\usepackage{scalerel}
\usepackage{tkz-euclide}
\usetikzlibrary{arrows, arrows.meta, bending, decorations.pathmorphing, decorations.markings} 

\usepackage{auto-pst-pdf}

\usepackage{pict2e, amssymb,latexsym,amstext,amsmath,amsthm,epsfig,ifthen}
\usepackage{color}
\usepackage{blkarray}
\usepackage[capitalize, nameinlink, noabbrev, nosort]{cleveref}
\usepackage{youngtab}
\usepackage[boxsize=.35 em]{ytableau}

\usepackage[dvips,centering,includehead,width=12.61cm,height=22.1cm,paperheight=23.8cm,paperwidth=17cm]{geometry}

\input epsf


\includeonly{
rings,
bibliography
}


\makeatletter
\newcommand{\manuallabel}[2]{\def\@currentlabel{#2}\label{#1}}
\makeatother

\manuallabel{ex:SL_2}{1.1.2}
\manuallabel{sec:Ptolemy}{1.2}
\manuallabel{eq:grassmann-plucker-3term}{1.2.1}
\manuallabel{sec:baseaffine}{1.3}
\manuallabel{prop:Muir}{1.3.5}
\manuallabel{fig:moves}{1.10}
\manuallabel{fig:moves1}{1.7}

\manuallabel{def:Q(T)-polygon}{2.2.1}
\manuallabel{def:quiverwd}{2.3.1}
\manuallabel{sec:mut-double-wiring}{2.4}
\manuallabel{fig:markov-quiver}{2.10}

\manuallabel{eq:exch-rel-geom}{3.1.1}
\manuallabel{exercise:quiverexchange}{3.1.3}
\manuallabel{def:Tn}{3.1.4}
\manuallabel{def:cluster-algebra}{3.1.6}
\manuallabel{ex:base-affine-SL3}{3.2.1}
\manuallabel{example:A(1,2)}{3.2.7}
\manuallabel{thm:Laurent}{3.3.1}
\manuallabel{th:Laurent-sharper}{3.3.6}

\manuallabel{ex:mutation-acyclic}{4.1.5}
\manuallabel{def:clustersubalgebra}{4.2.6}

\manuallabel{th:finite-type-classification}{5.2.8}
\manuallabel{sec:type-A}{5.3}
\manuallabel{example:SL4}{5.3.8}
\manuallabel{sec:type-D}{5.4}
\manuallabel{def:arcs-Dn}{5.4.3}
\manuallabel{def:P-gamma}{5.4.9}
\manuallabel{eq:Omega}{5.3.2}
\manuallabel{eq:Adefn}{5.4.1}
\manuallabel{tab:cluster-numbers}{5.17}

\manuallabel{ch:plabic}{7}
\manuallabel{ch:Grassmannians}{8}

\newtheorem{theorem}{Theorem}[section]
\newtheorem{proposition}[theorem]{Proposition}

\newtheorem{corollary}[theorem]{Corollary}
\newtheorem{lemma}[theorem]{Lemma}

\theoremstyle{definition}

\newtheorem{remark}[theorem]{Remark}
\newtheorem{example}[theorem]{Example}
\newtheorem{definition}[theorem]{Definition}

\newtheorem{exercise}[theorem]{Exercise}

\numberwithin{section}{chapter}
\numberwithin{equation}{section}
\numberwithin{figure}{chapter}
\numberwithin{table}{chapter}

\def\endproof{\hfill$\square$\medskip}

\def\ZZ{\mathbb{Z}}

\def\CC{\mathbb{C}}

\def\TT{\mathbb{T}}

\def\AA{\mathcal{A}}
\def\FFcal{\mathcal{F}}

\def\xx{\mathbf{x}}

\def\zz{\mathbf{z}}
\def\Rcal{\mathcal{R}}
\def\Xcal{\mathcal{X}}

\def\Gr{\operatorname{Gr}}

\def\SL{\operatorname{SL}}
\def\GL{\operatorname{GL}}

\newcommand{\light}[1]{{#1}}
\newcommand{\dark}[1]{{#1}}

\newcommand{\matbr}[4]{\left[\!\!\begin{array}{cc}
#1 & #2 \\
#3 & #4 \\
\end{array}\!\!\right]}

\newcommand{\CGr}[2]{\CC[\widehat{\Gr}_{#1,#2}]}

\newcommand{\overunder}[2]{
\!\begin{array}{c}
\scriptstyle{#1}\\[-.1in]
-\!\!\!-\!\!\!-\\[-.1in]
\scriptstyle{#2}
\end{array}
\!
}

\newcommand{\notch}{\scriptstyle\bowtie}

\newcommand{\bmat}[4]{\left[\!\!\begin{array}{cc} 
#1 & #2 \\ #3 & #4 \\ 
\end{array}\!\!\right]}

\newsavebox{\digon}
\setlength{\unitlength}{4pt} 
\savebox{\digon}(10,10)[bl]{
\qbezier(5,0)(0,5)(5,10)
\qbezier(5,0)(10,5)(5,10)
\put(5,0){\circle*{1}} 
\put(5,5){\circle*{1}} 
\put(5,10){\circle*{1}} 
}

\newsavebox{\lowbar}
\savebox{\lowbar}(10,10)[bl]{
\put(5,0){\line(0,1){5}}
}

\newsavebox{\lowtag}
\setlength{\unitlength}{4pt} 
\savebox{\lowtag}(10,10)[bl]{
\put(5,3.5){\makebox(0,0){$\notch$}}
}

\newsavebox{\highbar}
\setlength{\unitlength}{4pt} 
\savebox{\highbar}(10,10)[bl]{
\put(5,5){\line(0,1){5}}
}

\newsavebox{\hightag}
\setlength{\unitlength}{4pt} 
\savebox{\hightag}(10,10)[bl]{
\put(5,6.5){\makebox(0,0){$\notch$}}
}

\definecolor{darkred}{rgb}{1,0,0}        
\definecolor{lightred}{rgb}{1,0.4,0}     
\definecolor{darkblue}{cmyk}{1,0.4,0,0.4}  
\definecolor{lightblue}{cmyk}{1,0.4,0,0}  
\definecolor{darkgreen}{cmyk}{1,0.5,1,0}  
\definecolor{lightgreen}{cmyk}{1,0,1,0}  



\makeindex

\begin{document}




\frontmatter

\title{
\textsf{\Huge \hbox{Introduction to Cluster Algebras}}\\[.1in]
\textsf{\Huge Chapter 6} \\[.2in]
{\rm\textsf{\LARGE (preliminary version)}}
}

\author{\Large \textsc{Sergey Fomin}}

\author{\Large \textsc{Lauren Williams}}

\author{\Large \textsc{Andrei Zelevinsky}}

\maketitle

\noindent
\textbf{\Huge Preface}

\vspace{1in}


\noindent
This is a preliminary draft of Chapter 6 of our forthcoming textbook
\textsl{Introduction to cluster algebras}, 
joint with Andrei Zelevinsky (1953--2013). 

Other chapters have been posted as 
\begin{itemize}[leftmargin=.2in]
\item  \texttt{arXiv:1608:05735} \hbox{(Chapters~1--3)}, 
\item \texttt{arXiv:1707.07190} (Chapters~4--5), 
and
\item \texttt{arXiv:2106.02160} (Chapter~7). 
\end{itemize}
We expect to post additional chapters in the not so distant future. 

\medskip

We thank Bernard Leclerc and Karen Smith for their invaluable advice. 
The algebraic version of the argument 
in the proof of Proposition~\ref{prop:cluster-criterion} 
was suggested by Karen. 

We are grateful to Colin Defant, Chris Fraser, Anne Larsen, 
Amal Mattoo, Hanna \linebreak[3]
Mularczyk, and Raluca Vlad  
for a number of comments on the earlier versions of this chapter, 
and for assistance with creating figures.

Our work was partially supported by the NSF grants DMS-1664722 and
DMS-1854512.

\medskip

Comments and suggestions are welcome. 

\bigskip

\rightline{Sergey Fomin}
\rightline{Lauren Williams}

\vfill

\noindent
2020 \emph{Mathematics Subject Classification.} Primary 13F60.

\bigskip

\noindent
\copyright \ 2020--2021 by 
Sergey Fomin, Lauren Williams, and Andrei Zelevinsky

\tableofcontents

\mainmatter

\setcounter{chapter}{5}



\chapter{Cluster structures in commutative rings}
\label{ch:rings}

\vspace{-1.5cm}

Cluster algebras are commutative rings endowed with a particular kind of combinatorial structure 
(a \emph{cluster structure}, as we call it). 
In this chapter, we study the problem of identifying a cluster structure 
in a given commutative ring, or equivalently the problem of verifying that 
certain additional data make a given ring a cluster algebra.  


Sections~\ref{sec:coordinate-rings-small-rank}--\ref{sec:models-classical} 
provide several examples of cluster structures in coordinate rings of affine algebraic varieties. 
General techniques used to verify that a given commutative ring is a cluster algebra
are introduced in Section~\ref{sec:starfish}. \linebreak[3]
These techniques are applied in Sections~\ref{sec:rings-baseaffine}--\ref{sec:plucker-rings}
to treat several important classes of cluster algebras: 
the basic affine spaces for~$\SL_k$ (Section~\ref{sec:rings-baseaffine}),
the coordinate rings of $\operatorname{Mat}_{k \times k}$ 
and~$\SL_k$ (Section~\ref{sec:rings-matrices}),
and the homogeneous coordinate rings of Grassmannians,
also called Pl\"ucker rings (Section~\ref{sec:plucker-rings}). 
An~in-depth study of the latter topic will be given later in Chapter~\ref{ch:Grassmannians}, 
following the development of the required combinatorial tools in Chapter~\ref{ch:plabic}. 
The problem of defining cluster algebras by generators and relations~is discussed in Section~\ref{sec:generators+relations}. 
%

Section~\ref{sec:coordinate-rings-small-rank} is based on 
\cite[Section~12.1]{ca2} and \cite[Section~2]{gls-preprojective}. 
Sections~\ref{sec:cluster-algebras-and-coordinate-rings} and~\ref{sec:models-classical} 
follow \cite[Sections~11.1 and~12]{ca2}.  
Section~\ref{sec:starfish} follows~\cite[Section~3]{fomin-pylyavskyy}, \linebreak[3]
which in turn extends the ideas used in the proofs of 
\cite[Theorem~2.10]{ca3}
and subsequently \cite[Proposition~7]{scott}. 
The constructions 
presented in Sections \ref{sec:rings-baseaffine} and~\ref{sec:rings-matrices} 
predate the general definition of cluster algebras; 
they essentially go back to~\cite{bfz-tp} and~\cite{fz-double}, respectively. 
Our development~of cluster 
structures in Grassmannians (Section~\ref{sec:plucker-rings}) 
is different from the original sources 
\cite{scott} and \cite[Section~3.3]{gsv-2003}. 
The material in Section~\ref{sec:generators+relations} is mostly new.

\newpage

\section{Introductory examples}
\label{sec:coordinate-rings-small-rank}

As a warm-up, we discuss several simple 
examples of cluster structures in commutative rings.

\begin{example}
Let $V = \CC^{2k}$, with $k\ge 3$, be an even-dimensional vector space with 
coordinates $(x_1,\dots, x_{2k})$.  Consider the nondegenerate
quadratic form $Q$ on $V$ given by 
\begin{equation}
\label{eq:quadric-2k}
Q(x_1,\dots, x_{2k}) = \sum_{i=1}^k (-1)^{i-1} x_i x_{2k+1-i}.
\end{equation}
Let $$\mathcal{C} = \{v \in V \ \vert \ Q(v)=0\}$$
be the isotropic cone and $\mathbb{P}(\mathcal{C})$ the 
corresponding smooth quadric in $\mathbb{P}(V)$.

The homogeneous coordinate ring of the quadric
(or equivalently the coordinate ring of $\mathcal{C}$) is the quotient 
\begin{equation}
\label{eq:quadric-ring}
\mathcal{A} = \CC[x_1,\dots, x_{2k}]/\langle Q(x_1,\dots, x_{2k})\rangle.
\end{equation}
To see that $\AA$ is a cluster algebra, we define, for $1\le s\le k-3$, the functions
$$p_s = \sum_{i=1}^{s+1} (-1)^{s+1-i} x_i x_{2k+1-i}.$$
Then $\mathcal{A}$ has cluster variables
$\{x_2, x_3,\dots, x_{k-1}\} \cup \{x_{k+2}, x_{k+3}, \dots, x_{2k-1}\}$
and frozen variables
$\{x_1, x_k, x_{k+1}, x_{2k}\} \cup \{p_s \ \vert \ 1 \leq s \leq k-3\}$.
It has $2^{k-2}$ clusters defined by choosing,
for each $i\in\{2,\dots, k-1\}$, precisely one of $x_i$ and $x_{2k+1-i}$.
The exchange relations are (here $2\le i\le k-1$): 
\begin{equation*}
x_i x_{2k+1-i} = 
\begin{cases}
p_{i-1}+p_{i-2} & \text{if $3 \leq i \leq k-2$;}\\
p_1 + x_1 x_{2k} & \text{if $i=2$ and $k\neq 3$;}\\
x_k x_{k+1} + x_1 x_{2k} & \text{if $i = 2$ and $k=3$}\\
x_k x_{k+1} + p_{k-3} & \text{if $i = k-1$ and $k\neq 3$}.
\end{cases}
\end{equation*}
This cluster algebra is of finite 
 type~$A_1^{k-2} = A_1 \times A_1 \times \dots \times A_1$.
\cref{fig:quadric} shows a seed of~$\AA$ in the case $k=5$.


 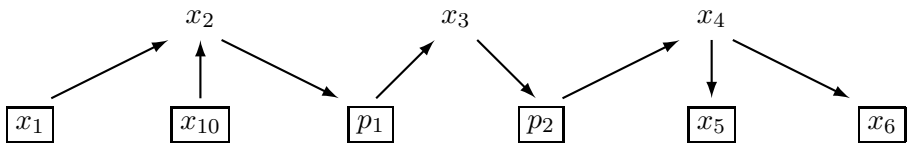
\begin{figure}[ht]
                \begin{center}
                        \setlength{\unitlength}{2pt}
                        \begin{picture}(160,20)(0,0)

                        \put(32,20){\makebox(0,0){$x_{2}$}}
                        \put(80,20){\makebox(0,0){$x_{3}$}}
                        \put(128,20){\makebox(0,0){$x_{4}$}}

                        \put(0,0){\makebox(0,0){$\boxed{x_{1}}$}}
                        \put(32,0){\makebox(0,0){$\boxed{x_{10}}$}}
                        \put(64,0){\makebox(0,0){$\boxed{p_{1}}$}}
                        \put(96,0){\makebox(0,0){$\boxed{p_{2}}$}}
                        \put(128,0){\makebox(0,0){$\boxed{x_{5}}$}}
                        \put(160,0){\makebox(0,0){$\boxed{x_{6}}$}}

\thicklines
\dark{
                        \put(4,5){\vector(2,1){22}}
                        \put(32,5){\vector(0,1){11}}
                        \put(36,16){\vector(2,-1){22}}

                        \put(65,5){\vector(1,1){11}}
                        \put(84,16){\vector(1,-1){11}}

                        \put(100,5){\vector(2,1){22}}
                        \put(128,16){\vector(0,-1){11}}
                        \put(132,16){\vector(2,-1){22}}
		}
                        \end{picture}
                \end{center}
\caption{A seed for the cluster structure on the ring~\eqref{eq:quadric-ring},
for $k=5$.
Here $p_1=-x_1x_{10}+x_2x_9\,$, $p_2=x_1x_{10}-x_2x_9+x_3x_8=x_4x_7-x_5x_6$. 
}
                \label{fig:quadric}
        \end{figure}

The quadric $\mathbb{P}(\mathcal{C})$ is a homogeneous space $G/P$ 
(a ``partial flag variety'') 
for the special orthogonal group attached to~$Q$.
The fact that~the coordinate ring $\CC[\mathcal{C}]$ is a cluster
algebra is a special case of a more general \hbox{phenomenon}. 
The (multi-)homogeneous coordinate ring of any type~$A$ partial flag \hbox{variety} 
carries a natural cluster algebra structure~\cite{gls-partialflag}. 
For generalizations to other semisimple Lie groups~$G$ and parabolic subgroups~$P\!\subset\! G$, 
see the survey~\cite{gls-survey-Lie}. 
\end{example}

\begin{example}
\label{ex:2x(n+1)}
Let $\AA=\CC[a_1,\dots,a_{n+1},b_1,\dots,b_{n+1}]$ be the coordinate ring 
of the affine space of $2\times (n+1)$ matrices
\begin{equation}
\label{eq:2x(n+1)}
\begin{bmatrix}
a_1&a_2&\cdots&a_{n+1}\\
b_1&b_2&\cdots&b_{n+1}
\end{bmatrix}.
\end{equation}
We will show that $\AA$ carries several pairwise non-isomorphic cluster algebra structures.

First, we can identify $\AA$ with a cluster algebra of type~$A_n$
as follows.  The cluster variables and frozen variables
are the $2(n+1)$ matrix entries
$a_1,\dots,a_{n+1},b_1,\dots,b_{n+1}$ together with the
$\binom{n+1}{2}$ minors
(Pl\"ucker coordinates)
$P_{ij}=a_ib_j-a_jb_i\,$.
The exchange relations are:
\begin{align*}
a_i \, b_j &= P_{ij}+ a_j\,b_i \quad (1\le i<j\le n+1),\\
a_j\,P_{ik} &= a_i\,P_{jk}+a_k\,P_{ij}\quad (1\le i<j<k\le n+1),\\
b_j\,P_{ik} &= b_i\,P_{jk}+b_k\,P_{ij}\quad (1\le i<j<k\le n+1),\\
P_{ik}\,P_{j\ell} &= P_{ij}\,P_{k\ell} +P_{i\ell}\,P_{jk}\quad (1 \leq i < j < k < \ell \leq n+1).
\end{align*}
By adding a column 
$\left[\begin{smallmatrix}
1\\  0
\end{smallmatrix}\right]$
at the beginning
and a column 
$\left[\begin{smallmatrix}
0\\  1
\end{smallmatrix}\right]$
at the end of the $2 \times (n+1)$ matrix,
	we obtain a full rank $2 \times (n+3)$ matrix,
	which we can view as an element of the Grassmannian 
	${\rm Gr}_{2,n+3}$.  
	Under this identification, matrix entries 
	of the $2 \times (n+1)$ matrix 
	are equal to Pl\"ucker coordinates of the corresponding 
	element of 
	${\rm Gr}_{2,n+3}$:  
	 $a_i = P_{i+1,n+3}$ and
	$b_i = P_{1, i+1}$.  Note also that $P_{1,n+3}=1$.  We can thus 
	identify $\AA$ with the quotient $R_{2,n+3}/\langle P_{1, n+3} - 1\rangle$
of the Pl\"ucker ring $R_{2,n+3}$.  Our cluster variables and frozen variables
	for $\AA$ are inherited from the cluster structure on $R_{2,n+3}$.  
The cluster algebra $\AA$ has rank $n$, with  $n+2$ frozen variables.
In the case $n=1$ we
recover  Example~\ref{ex:SL_2}.

On the other hand, 
subdividing a $2 \times (n+1)$ matrix~\eqref{eq:2x(n+1)}
into a $2 \times i$ matrix and a $2 \times (n+1-i)$ matrix,
we can make~$\AA$ into a cluster algebra of type $A_{i-1} \times A_{n-i}$.
The cluster and frozen variables would 
include all matrix entries as well as every $2 \times 2$ minor
contained in one of the two distinguished submatrices.
The total number of frozen variables in this cluster algebra is 
$(i+1) + (n+2-i) = n+3$.
For each $i$, this gives a cluster algebra structure 
of rank $(i-1)+(n-i) = n-1$, with $n+3$ frozen variables.

More generally, we can partition a $2 \times (n+1)$ matrix~\eqref{eq:2x(n+1)}
into $k$ matrices of sizes
$2 \times i_1$, \dots , $2 \times i_k$, 
where $i_1 + \dots + i_k = n+1$. \linebreak[3]
The cluster variables and frozen variables would  
include all matrix entries plus every $2 \times 2$ minor
contained in one of the $k$ distinguished submatrices.
This gives a cluster structure of type 
$A_{i_1-1} \!\times\! \dots\! \times \!A_{i_k-1}$ on the ring~$\AA$.
This cluster algebra has rank
$(i_1\!-\!1)\! +\! (i_2\!-\!1)\! +\! \dots \!+\! (i_k\!-\!1)\! =\! n\!-\!k\!+\!1$, 
and has $(i_1+1)+(i_2+1)+\dots + (i_k+1) = n\!+\!k\!+\!1$ frozen variables.
\end{example}

\section{Cluster algebras and coordinate rings} 
\label{sec:cluster-algebras-and-coordinate-rings}

Suppose a collection of regular functions
on an algebraic variety~$X$ satisfies relations 
which can be interpreted as exchange relations for a seed pattern. 
Then---subject to conditions articulated below---the coordinate ring of~$X$ can be
naturally identified with the corresponding cluster algebra:  
 
\begin{proposition} 
\label{pr:geom-realization-criterion} 
Let $\AA$ be a cluster algebra 
(of geometric type, over~$\CC$) of rank~$n$, 
with frozen variables $x_{n+1},\dots,x_m$. 
Let $\mathcal{X}$ denote the set of cluster variables in~$\AA$. 
Let $X$ be a rational affine irreducible algebraic variety 
of dimension~$m$. 
Suppose we are given a family of nonzero regular functions 
\[
\{\varphi_z : z \in \mathcal{X}\} \cup 
\{\varphi_{n+1},\dots,\varphi_m\}\subset \CC[X]
\]
satisfying the following conditions: 
\begin{eqnarray} 
\label{eq:phi-generate-X} 
&&\text{the functions $\varphi_z$ ($z\in \mathcal{X}$) and 
$\varphi_i$ ($n+1 \le i\le m$)  
generate~$\CC[X]$;}\\ 
\label{eq:same-relations} 
&&\text{replacing each cluster variable $z$ by~$\varphi_z$,
and each frozen variable~$x_i$}\\ 
\nonumber 
&&\text{by~$\varphi_i$ makes every exchange relation \eqref{eq:exch-rel-geom} into 
an identity in $\CC[X]$.} \hspace{-.2in} 
\end{eqnarray} 
Then there is a unique $\CC$-algebra isomorphism $\varphi:\AA\to\CC[X]$
such that \linebreak[3]
$\varphi(z)=\varphi_z$ for all $z\in\mathcal{X}$
and $\varphi(x_i)=\varphi_i$ for $i\in\{n+1,\dots,m\}$. 
\end{proposition}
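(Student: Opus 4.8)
The plan is to construct the isomorphism $\varphi$ directly as a ring homomorphism $\AA \to \CC[X]$, prove surjectivity and injectivity separately, and along the way exploit the fact that a cluster algebra is generated by the cluster variables and frozen variables, subject to nothing but the exchange relations in a suitable sense. First I would build $\varphi$: since $\AA$ is (by the Laurent phenomenon, \cref{thm:Laurent}) a subring of the field of rational functions in the variables of any fixed seed, it suffices to define $\varphi$ on the initial cluster $x_1,\dots,x_n$ and on the frozen variables $x_{n+1},\dots,x_m$ by $\varphi(x_i)=\varphi_i$ (or $\varphi_{x_i}$), check that these target functions are algebraically independent in $\CC[X]$ (hence in its fraction field) so that $\varphi$ extends to a field homomorphism on the ambient field $\FFcal = \CC(x_1,\dots,x_m)$, and then verify that $\varphi(\AA)\subseteq\CC[X]$. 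Algebraic independence of the $\varphi_i$ on the initial cluster follows from a dimension count: they are $m$ functions on an irreducible variety of dimension $m$, and by \eqref{eq:phi-generate-X} together with the exchange relations \eqref{eq:same-relations} every $\varphi_z$ is a rational (in fact Laurent) expression in them, so they generate the function field $\CC(X)$, which has transcendence degree $m$; $m$ generators of a degree-$m$ extension must be a transcendence basis.

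Next I would check that $\varphi$ carries cluster variables to the prescribed regular functions: this is where \eqref{eq:same-relations} does the work. Any cluster variable $z\in\mathcal{X}$ is reached from the initial seed by a sequence of mutations, and at each mutation step the new cluster variable is determined by an exchange relation \eqref{eq:exch-rel-geom}; applying $\varphi$ (a field homomorphism) to that relation and using the hypothesis that the $\varphi$'s satisfy the same relations, one shows by induction on the length of the mutation path that $\varphi(z)=\varphi_z$. Hence $\varphi(\AA)$ is the subring of $\CC[X]$ generated by all the $\varphi_z$ and $\varphi_i$, which by \eqref{eq:phi-generate-X} is all of $\CC[X]$: so $\varphi$ is surjective onto $\CC[X]$, and in particular $\varphi(\AA)\subseteq\CC[X]$ as needed.

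Finally, injectivity. Here $\varphi$ is a homomorphism of integral domains that restricts to an isomorphism on the ambient fields (it is the restriction of the field isomorphism $\FFcal\to\CC(X)$ induced by sending the initial cluster to the algebraically independent tuple $(\varphi_i)$, which is invertible because both sides are purely transcendental of the same degree and the $\varphi_i$ form a transcendence basis). A nonzero element of $\AA\subseteq\FFcal$ has nonzero image under a field isomorphism, so $\ker\varphi=0$. Uniqueness is immediate: the cluster and frozen variables generate $\AA$, so a $\CC$-algebra homomorphism out of $\AA$ is determined by its values on them.

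I expect the main obstacle to be the rigorous justification that $\varphi$ is \emph{well-defined} as a map on all of $\AA$ — i.e.\ that passing through the field of fractions of the initial seed is legitimate and that the inductive argument over mutation paths is consistent (different paths to the same cluster variable must give the same answer, which is automatic once one works inside the fixed ambient field $\FFcal$ but deserves care). The subtlety is that $\AA$ is defined as a subring of $\FFcal$, not by generators and relations, so one must argue at the level of $\FFcal$ throughout rather than trying to check relations abstractly; the dimension/transcendence-degree input is what makes the field-level statement rigorous, and the irreducibility and rationality hypotheses on $X$ are exactly what is needed for that input. An alternative, more algebraic route (the one apparently suggested by Karen Smith in the acknowledgments, though for a later proposition) would be to present $\AA$ via an explicit presentation and check relations lift, but the field-theoretic argument above is the cleanest given the tools available at this point in the text.
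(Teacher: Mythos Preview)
Your proposal is correct and follows essentially the same route as the paper: show that the images of an initial extended cluster form a transcendence basis of $\CC(X)$ (via the dimension/transcendence-degree count, using \eqref{eq:phi-generate-X} and \eqref{eq:same-relations} to see they generate $\CC(X)$), extend to a field isomorphism $\FFcal\to\CC(X)$, and then restrict to obtain the algebra isomorphism $\AA\to\CC[X]$. The paper's write-up is more compressed, but the logical skeleton is identical to yours.
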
 

\begin{remark}
We briefly comment on the general assumptions on the variety~$X$
made above. 
Irreducibility implies that the ring of regular functions $\CC[X]$ is a 
domain, so its fraction field is well defined 
(and coincides with the field $\CC(X)$ of
rational functions on~$X$). 
Rationality of~$X$ means that $\CC(X)$ 
is isomorphic to the field of rational functions 
over~$\CC$ in $\dim (X)$ independent~variables. 
In~a typical application, $X$~contains an open
subset isomorphic to an affine space, 
so this condition is satisfied. 
\end{remark}
 
\begin{proof}
The key assertion to be proved is that each cluster in $\mathcal{A}$
gives rise to a transcendence basis of the field of rational functions
$\CC(X)$.
Pick a seed in~$\AA$; let $\xx$ (resp.,~$\tilde\xx$) be the
corresponding cluster (resp., extended cluster). 
Every  cluster variable $z\in\mathcal{X}$ 
is expressed as a rational function in 
$\tilde \xx$ by iterating the exchange relations away from the chosen seed. 
By~\eqref{eq:same-relations}, we can apply the same 
procedure to express all functions $\varphi_z$ and $\varphi_i$ 
inside the field $\CC(X)$ as rational functions in the set 
\[
\Phi = \{\varphi_x: x \in \xx\} \cup \{\varphi_{n+1},\dots,\varphi_m\}.
\] 
Since $X$ is rational and $|\Phi| =m= \dim (X)$, we conclude from (\ref{eq:phi-generate-X}) 
that $\Phi$ is a transcendence basis of the field 
of rational functions $\CC(X)$, and that the correspondence  
\[
z \mapsto \varphi_z \ \  (z \in \mathcal{X}), 
\quad x_i \mapsto \varphi_i \ \  (n<i\le m) 
\]
extends uniquely to an isomorphism of fields $\FFcal \to \CC(X)$, and hence yields an 
isomorphism of algebras $\AA \to \CC[X]$. 
\end{proof}

\section{Examples of cluster structures of classical types 
}
\label{sec:models-classical}

Informally speaking,
Proposition~\ref{pr:geom-realization-criterion} tells us that in order to
identify a coordinate ring of a rational algebraic variety as a cluster algebra, 
it suffices to find elements of that ring that satisfy the requisite
exchange relations. 
In reality, this approach is only practical for cluster algebras of
finite type. 
In~this section, we present four examples of coordinate rings 
endowed with cluster structures of types $A_n$, $B_n$, $C_n$, and~$D_n$,
respectively. 
All four rings are closely related to each other;
the first two of them are actually identical (as commutative rings) even
though the cluster structures are different.

Our first example, the homogeneous coordinate ring of a 
Grassmannian of $2$-planes, has already been thoroughly examined 
in Sections~\ref{sec:Ptolemy} and~\ref{sec:type-A}. 

\begin{example} 
[\emph{Type~$A_n$}] 
\label{ex:realization-An} 
Let $X=\widehat{\Gr}_{2, n+3}$ 
be the affine cone over the Grassmannian ${\rm Gr}_{2,n+3}$ 
of 2-dimensional subspaces in $\CC^{n+3}$ 
taken in its Pl\"ucker embedding.  
Equivalently, $X$ can be viewed as the variety of nonzero decomposable bivectors: 
\[
X\cong\{u\wedge v\neq 0 \mid u,v\in\CC^{n+3}\}.
\] 
This is a $(2n+3)$-dimensional affine algebraic variety. 
Its coordinate ring is the Pl\"ucker ring $R_{2,n+3}=\CC[X]$. 
This ring is generated by the standard Pl\"ucker coordinates $P_{ab}\in\CC[X]$, for 
$1 \le a < b \leq n+3$. 

Alternatively, we can view the Pl\"ucker ring $R_{2,n+3}$ as
the ring of $\SL_2$-invariant polynomial functions 
on the space of $(n+3)$-tuples of vectors in~$\CC^2$. 
Representing these vectors as columns of a $2 \times (n+3)$ 
matrix $z= (z_{ab})$, one identifies the Pl\"ucker coordinates with 
the $2 \times 2$ minors of~$z$: 
\[
P_{ab} = z_{1a} z_{2b} - z_{1b} z_{2a} \quad (1 \leq a < b \leq
n+3).
\]

In Section~\ref{sec:type-A}, we constructed a seed pattern 
of type~$A_n$ in the field of rational functions~$\CC(X)$. 
The seeds in this pattern are labeled by triangulations of a
convex $(n+3)$-gon $\mathbf{P}_{n+3}$ by pairwise noncrossing diagonals.
Each cluster consists of the Pl\"ucker
coordinates $P_{ij}$ corresponding to the diagonals in a
given triangulation. 
The frozen variables are the Pl\"ucker coordinates associated with the sides of~$\mathbf{P}_{n+3}$. 
The exchange relations of the seed pattern are exactly the Grassmann-Pl\"ucker relations~\eqref{eq:grassmann-plucker-3term}. 
Thus, we can view this example as an instance of \cref{pr:geom-realization-criterion}. 

\pagebreak[3]

As a cluster algebra of type~$A_n$, the Pl\"ucker ring $R_{2,n+3}$ 
is generated by the cluster and coefficient variables, 
which are precisely the Pl\"ucker coordinates~$P_{ij}$.
It is moreover well known (and not hard to see) 
that the ideal of relations among the Pl\"ucker coordinates
is generated by the exchange relations, or more precisely by the polynomials 
\[
P_{ik}P_{jl}-P_{ij}P_{kl}-P_{il}P_{jk} \quad (1\le i<j<k<l\le n+3)
\]
(cf.~\eqref{eq:grassmann-plucker-3term}). 
We will see in Section~\ref{sec:generators+relations} 
that this phenomenon does not hold in general:
even when a cluster algebra is of finite type, some of the relations among its generators
may not lie in the ideal generated by the exchange relations. 
\end{example}

While the type~$A$ cluster structure on a Pl\"ucker ring
$R_{2,m}$ is perhaps the most natural one, we can also endow this ring 
with a type~$B$ cluster structure, as we now explain.

\begin{example} 
[\emph{Type~$B_n$}] 
\label{ex:realization-Bn} 
The two-element group 
$\ZZ/2\ZZ$ acts on the set of tagged arcs and boundary segments in 
the punctured polygon $\mathbf{P}_{n+1}^\bullet$ 
(see Definition~\ref{def:arcs-Dn}) by switching the tagging on radii, and leaving everything else intact.
Let us associate an element~$P_\gamma$ of the Pl\"ucker ring $R_{2,n+2}$ to every
$\ZZ/2\ZZ$-orbit~$\gamma$ 
as follows
(cf.\ Definition~\ref{def:P-gamma}): 
\[
P_\gamma\!=\!
\begin{cases}
P_{ab}
&\text{if $\gamma$ doesn't cross the cut, and has endpoints $a$ and $b>a$;}\\
P_{a \bar b}
&\text{if $\gamma$ crosses the cut, and has endpoints $a$ and $b>a$;}\\
P_{a,n+2} 
&\text{if $\gamma$ is an orbit of radii with endpoints $p$ and~$a$,}
\end{cases}
\]
where we use the notation 
\begin{equation}
\label{eq:P-a,bar-b}
P_{a \bar b}=P_{a,n+2} P_{b,n+2} - P_{ab} \,. 
\end{equation}
The cluster variables and frozen variables are the elements $P_{\gamma}$,
where $\gamma$ ranges over orbits of tagged arcs and boundary segments, respectively.

We use Proposition~\ref{pr:geom-realization-criterion} to show that 
this yields a cluster structure of type~$B_n$ in~$R_{2,n+2}$. 
The only nontrivial task is to check condition 
\eqref{eq:same-relations}, which amounts to verifying the 
following six identities: 
\begin{align} 
\label{eq:exchange-relations-in-deltas-Cn-1} 
P_{ac} \, P_{bd} = 
P_{ab}\, P_{cd} + 
P_{ad}\, P_{bc}  \qquad & (1 \leq a < b < c < d \leq n+1),\\ 
\label{eq:exchange-relations-in-deltas-Cn-2} 
P_{a \overline c} \, P_{bd} = 
P_{a \overline b}\, P_{cd} + 
P_{a \overline d}\, P_{bc} \qquad & (1 \leq a < b < c < d \leq n+1),\\ 
\label{eq:exchange-relations-in-deltas-Cn-3} 
P_{a \overline c} \, P_{b \overline d} = 
P_{ab}\, P_{cd} + 
P_{a \overline d}\, P_{b \overline c} \qquad & (1 \leq a < b 
< c < d \leq n+1),\\ 
\label{eq:exchange-relations-in-deltas-Cn-4} 
P_{ac} \, P_{a \overline b} = 
P_{ab}\, P_{a \overline c} + 
P_{a, n+2}^{2} \, P_{bc} \qquad & (1 \leq a < b < c \leq n+1),\\ 
\label{eq:exchange-relations-in-deltas-Cn-5} 
P_{a \overline b} \, P_{b \overline c} = 
P_{ab}\, P_{bc} + 
P_{b, n+2}^{2} \, P_{a \overline c} \qquad & 
(1 \leq a < b < c \leq n+1),\\ 
\label{eq:exchange-relations-in-deltas-Cn-6} 
P_{a,n+2} P_{b,n+2} = P_{ab} + P_{a\bar b}
\qquad &(1 \leq a < b \leq n+1) \, . 
\end{align} 
While these identities 
can be directly deduced from the Grassmann-Pl\"ucker relations, 
we prefer another route, 
presented below in a slightly informal~way. 

Consider the algebraic model of a seed pattern of type~$D_{n+1}$ 
described in Section~\ref{sec:type-D}. 
(Note that we are replacing $n$ by $n+1$.) 
Recall that it involves working with $n+1$ two-dimensional vectors $v_1,\dots,v_{n+1}$, 
two ``special'' vectors $a$ and~$\overline{a}$, 
and two scalars $\lambda$ and~$\overline{\lambda}$.
To get a seed pattern of type~$B_n$, we specialize the type~$D_{n+1}$
seed pattern as follows. 
The vectors $v_1,\dots,v_{n+1}$ are kept without change. 
We take vectors $a$ and $b$ satisfying 
\begin{equation}
\label{eq:ba=1}
\langle b,a\rangle=1  
\end{equation}
(we shall later identify $a$ with~$v_{n+2}$), 
set
\begin{align*}
\overline{a}&=a+\varepsilon b,\\
\lambda&=1,\\
\overline{\lambda}&=1+\varepsilon,
\end{align*}
and take the limit $\varepsilon\to 0$. 
We then have  
\[
a^{\notch}
=\frac{\overline{\lambda}-\lambda}{\langle
  \overline{a}, a\rangle} \,\overline{a}
=\frac{\varepsilon}{\langle
  a+\varepsilon b, a\rangle}\,(a+\varepsilon b)
\to a,
\]
so in the limit we get $a^{\notch}=a$ and
$\overline{\lambda}=\lambda$. 
This yields the folding of our type $D_{n+1}$ seed pattern into
a type $B_n$ pattern. 
It remains to check that the (folded) cluster variables of the type
$D_{n+1}$ pattern specialize to the cluster variables~$P_\gamma$
defined above. 
The only nontrivial case is the second one, wherein 
$P_\gamma=P_{a\bar b}$. 
The operator $A$ defined in~\eqref{eq:Adefn} specializes via
\[
A v = \frac{\overline{\lambda} \langle v,a \rangle \,\overline{a} - \lambda \langle
  v,\overline{a} \rangle \,a}{\langle \overline{a}, a\rangle}
\to 
\langle v,a\rangle a - \langle v,b\rangle a+\langle v,a\rangle b. 
\]
As a result, we get, 
using the Grassmann-Pl\"ucker relation and~\eqref{eq:ba=1}: 
\[
\langle w,Av\rangle 
\to \langle v,a\rangle\langle w,a\rangle - \langle v,b\rangle\langle
w,a\rangle + \langle v,a\rangle \langle w,b\rangle
=\langle v,a\rangle\langle w,a\rangle-\langle v,w\rangle, 
\]
So in particular $\langle v_j,Av_i \rangle\to \langle
v_i,a\rangle\langle v_j,a\rangle-\langle v_i,v_j\rangle$, 
matching~\eqref{eq:P-a,bar-b}. 
\end{example} 

\begin{remark}
Examples~\ref{ex:2x(n+1)} and~\ref{ex:realization-An}-\ref{ex:realization-Bn} demonstrate 
that a given ring can carry different non-isomorphic cluster structures. 
A~particularly striking example was given in \cite[Figure~20]{fomin-pylyavskyy}:
the ``mixed Pl\"ucker~ring" $\CC[V^3\times (V^*)^4]^{\SL(V)}$, 
with $V\cong\CC^3$, can carry a cluster structure of finite type $D_6$ or~$E_6$, 
or a cluster structure of an infinite mutation type (hence of infinite type). 
\end{remark}
 
\begin{example} 
[\emph{Type~$C_n$}] 
\label{ex:realization-Cn} 
Let $\operatorname{SO}_2$ be the group of complex matrices 
\[ 
\bmat{u}{-v}{v}{u} 
\] 
with $u^2+v^2 = 1$. 
Consider the ring $R = \CC[V^{n+1}]^{\operatorname{SO}_2}$ 
of $\operatorname{SO}_2$-invariant polynomial functions 
on the space of $(n+1)$-tuples of vectors
\begin{equation}
\label{eq:v1-vn+1}
(v_1,\dots,v_{n+1})\in V^{n+1},\quad V=\CC^2,
\end{equation}
or equivalently $\operatorname{SO}_2$-invariant polynomials
in the entries of a $2\times (n+1)$ matrix
\[
z=\begin{bmatrix}
z_{11} & \cdots & z_{1,n+1}\\ 
z_{21} & \cdots & z_{2,n+1}
\end{bmatrix}. 
\]
This ring is generated by the Pl\"ucker coordinates 
\[
P_{ab} 
= \langle v_a,v_b\rangle
= z_{1a} z_{2b} - z_{1b} z_{2a} 
\quad (1 \leq a < b \leq n+1)
\]
together with the polynomials
\[
P_{a \overline b} 
= \langle v_b,Mv_a\rangle
= z_{1a} z_{1b} + z_{2a} z_{2b} 
\quad (1 \leq a \leq b \leq n+1) \,,  
\]
where $M=\left[\begin{smallmatrix}
0 & -1\\
1 & 0
\end{smallmatrix}\right]\in\operatorname{SO}_2$. 
%
The ring $R=\CC[V^{n+1}]^{\operatorname{SO}_2}$ can 
also be viewed as the coordinate ring $\CC[X]$ of the variety 
$X$ of complex $(n+1) \!\times\! (n+1)$ matrices of rank $\leq 1$; 
even more geometrically, $X$~is the affine cone over the 
product of two copies of the projective space $\mathbb{CP}^n$ taken in the 
Segre embedding. 
Specifically, the~map 
\[ 
z=\begin{bmatrix}
z_{11} & \cdots & z_{1,n+1}\\ 
z_{21} & \cdots & z_{2,n+1}
\end{bmatrix}
\mapsto ((z_{1a} - i z_{2a})(z_{1b} + i z_{2b}))_{a,b\in\{1,\dots,n+1\}}
\in X 
\] 
induces an algebra isomorphism 
$\CC[X] \to \CC[V^{n+1}]^{\operatorname{SO}_2}$. 
(Note that 
\[
(z_{1a} - i z_{2a})(z_{1b} + i z_{2b})=P_{a\bar b} + i P_{ab}\,.)
\]

To construct a cluster algebra structure of type~$C_n$ in this ring,
let us 
associate an element~$P_\gamma\in R$ to every orbit $\gamma$ of the action 
of 
$\ZZ/2\ZZ$ on the set of diagonals and sides of a 
regular $(2n+2)$-gon 
 $\mathbf{P}_{2n+2}$.  
Specifically,  we~set
\[
P_\gamma\!=\!
\begin{cases}
P_{ab}
&\text{if $\gamma = 
\{(a,b), (a+n+1, b+n+1)\}$ for $a<b \leq n+1$;}\\
P_{a \bar b}
&\text{if $\gamma = 
\{(a,b+n+1), (a+n+1,b)\}$ for $a\leq b \leq n+1$.}
\end{cases}
\]
where we use the notation $(a,b)$ to denote 
a diagonal or side with endpoints $a$ and $b$.
The cluster variables and frozen variables are the elements $P_{\gamma}$,
where $\gamma$ ranges over orbits of diagonals and boundary segments, respectively.

 
The verification is similar to Example~\ref{ex:realization-Bn} above. 
The only substantive task is to check that the functions $P_{ab}$ and
$P_{a\bar b}$ satisfy the requisite exchange relations:
\begin{align} 
\label{eq:exchange-relations-in-deltas-Bn-1} 
P_{ac} \, P_{bd} = 
P_{ab}\, P_{cd} + 
P_{ad}\, P_{bc}  \qquad & (1 \leq a < b < c < d \leq n+1),\\ 
\label{eq:exchange-relations-in-deltas-Bn-2} 
P_{a \overline c} \, P_{bd} = 
P_{a \overline b}\, P_{cd} + 
P_{a \overline d}\, P_{bc} \qquad & (1 \leq a < b < c < d \leq n+1),\\ 
\label{eq:exchange-relations-in-deltas-Bn-3} 
P_{a \overline c} \, P_{b \overline d} = 
P_{ab}\, P_{cd} + 
P_{a \overline d}\, P_{b \overline c} \qquad & (1 \leq a < b 
< c < d \leq n+1),\\ 
\label{eq:exchange-relations-in-deltas-Bn-4} 
P_{ac} \, P_{a \overline b} = 
P_{ab}\, P_{a \overline c} + 
P_{a \overline a} \, P_{bc} \qquad & (1 \leq a < b < c \leq n+1),\\ 
\label{eq:exchange-relations-in-deltas-Bn-5} 
P_{a \overline b} \, P_{b \overline c} = 
P_{ab}\, P_{bc} + 
P_{b \overline b} \, P_{a \overline c} \qquad & 
(1 \leq a < b < c \leq n+1),\\ 
\label{eq:exchange-relations-in-deltas-Bn-6} 
P_{a\overline a}\, P_{b \overline b} = 
P_{ab}^2 + P_{a \overline b}^2 \qquad\qquad &(1 \leq a < b \leq n+1) \, . 
\end{align} 
This can either be done directly
 or via folding,
this time going from a cluster structure of type $A_{2n-1}$ in the
Pl\"ucker ring $R_{2,2n+2}$ to the type~$C_n$ cluster structure in~$\CC[X]$, as follows. 
Starting with the $(n+1)$-tuple~\eqref{eq:v1-vn+1}, 
we build the $(2n+2)$-tuple $(v_1,\dots,v_{2n+2})$ 
by setting $v_{n+1+a}=Mv_a$ for $a\in\{1,\dots,n+1\}$. 
In this specialization, 
using the fact that $M^2=-1$, we have
\begin{align*}
\langle v_{n+1+a}, v_{n+1+b} \rangle
&=\langle M v_{a}, M v_{b} \rangle
=\langle v_{a}, v_{b} \rangle,\\
\langle v_{b}, v_{n+1+a} \rangle
&=\langle v_{b}, M v_{a} \rangle
=\langle Mv_{b}, -v_{a} \rangle
=\langle v_{a}, v_{n+1+b} \rangle.
\end{align*}
This shows that two Pl\"ucker coordinates corresponding
to centrally symmetric diagonals in 
 $\mathbf{P}_{2n+2}$ are equal when evaluated at 
the $(2n+2)$-tuple $(v_1,\dots,v_{2n+2})$.   
Thus, the elements $P_{\gamma}\in \CC[X]$ defined earlier come from Pl\"ucker
coordinates in $R_{2,2n+2}$ via folding.
The exchange relations in question are obtained by specializing the exchange relations in the Pl\"ucker ring. 
\end{example} 

\begin{example} 
[\emph{Type~$D_n$}] 
\label{ex:realization-Dn} 
Let $\widehat{\Gr}_{2, n+2}$ 
 be the affine cone over the Grassmannian ${\rm
  Gr}_{2,n+2}$, taken in its Pl\"ucker embedding. 
Let $X$ be the ``Schubert'' divisor in $\widehat{\Gr}_{2,n+2}$ 
given by the equation 
$P_{n+1,n+2}= 0$; thus, we have 
$$\CC[X] = \CC[\widehat{\Gr}_{2,n+2}]/\langle P_{n+1,n+2} \rangle \ .$$ 
A cluster structure of type~$D_n$ in the coordinate ring $R=\CC[X]$ can be obtained by
associating an element $P_\gamma\in R$ to each tagged arc $\gamma$
in the punctured polygon~$\mathbf{P}_n^\bullet$, as follows
(cf.\ Definition~\ref{def:P-gamma}):
\[
P_\gamma\!=\!
\begin{cases}
P_{ab}
&\text{if $\gamma$ doesn't cross the cut} \\
& \qquad \text{and has endpoints $a$ and $b>a$;}\\
P_{a,n+1} P_{b,n+2} - P_{ab} 
&\text{if $\gamma$ crosses the cut} \\
& \qquad \text{and has endpoints $a$ and $b>a$;}\\
P_{a,n+1}
&\text{if $\gamma$ is a plain radius with endpoint~$a$;}\\
P_{a,n+2}
&\text{if $\gamma$ is a notched radius with endpoint~$a$.}
\end{cases}
\]
(We thus identify the two eigenvectors of~\eqref{eq:Adefn} 
with $v_{n+1}$ and~$v_{n+2}$, respectively.)
The verification is left to the reader; or see \cite[Example~12.15]{ca2}. 
\end{example} 
 
\begin{remark}
While the seed pattern of type~$D_n$ described in
Example~\ref{ex:realization-Dn} is much simpler than the one used in
Section~\ref{sec:type-D}, 
we did not use it there because the corresponding exchange matrices do
not have full rank. 
\end{remark}


\section{Starfish lemma}
\label{sec:starfish}

In what follows, we denote by $\AA(\tilde\xx,\tilde B)$ the cluster
algebra defined by a seed $(\tilde\xx,\tilde B)$ 
in some ambient field of rational functions freely generated by~$\tilde\xx$. 

Any cluster algebra, being a subring of a field, is an integral
domain (and under our conventions, a \hbox{$\CC$-algebra}). 
Conversely, given such a domain, one may be interested in
identifying it as a cluster algebra. 

For the remainder of this section, we let $\Rcal$ be an integral domain and a
$\CC$-algebra, and we  
denote by $\FFcal$ the quotient field of~$\Rcal$. 
The challenge is to find a seed $(\tilde\xx,\tilde B)$ in $\FFcal$ such
that $\AA(\tilde\xx,\tilde B)=\Rcal$. 
The difficulties here are two-fold.
To prove the inclusion $\AA(\tilde\xx,\tilde B)\supset \Rcal$,
we need to demonstrate that (a subset of) cluster variables in this
seed pattern, together with the frozen variables, generates~$\Rcal$. 
To prove the reverse inclusion 
$\AA(\tilde\xx,\tilde B)\subset \Rcal$,
we need to show that each cluster variable in the seed
pattern generated by $(\tilde\xx,\tilde B)$ is an element of~$\Rcal$ 
rather than merely a rational function in~$\FFcal$.
In this section, we give sufficient conditions that guarantee the
latter inclusion. 

Recall that $\Rcal$ is \emph{normal} if it is
integrally closed in~$\FFcal$. 
This property is in particular satisfied if $\Rcal$ is \emph{factorial} 
(or a \emph{unique factorization domain}). 

Recall that $\Rcal$ is called \emph{Noetherian} if any ascending chain of
ideals stabilizes. 
This is in particular satisfied if $\Rcal$ is \emph{finitely generated}
(over~$\CC$). 

All rings of interest to us will be factorial and finitely generated,
hence normal and Noetherian. 

Let us call two elements $r,r'\in \Rcal$ \emph{coprime} 
if they are not contained in the same prime ideal of height~1. 
If~$\Rcal$ is factorial, then such ideals are principal, 
and one recovers the usual definition
of coprimality 
($r$~and $r'$ are coprime if $\operatorname{gcd}(r,r')$ is a unit). 


\begin{proposition}[{``Starfish lemma''}]
\label{prop:cluster-criterion}
Let $\Rcal$ be a $\CC$-algebra and a 
normal Noetherian domain.
Let~$(\tilde\xx,\tilde B)$ be a seed of rank $n$  
in the fraction field $\FFcal$
with $\tilde\xx = (x_1,\dots, x_m)$ for $n \leq m$
such that
\begin{enumerate}[leftmargin=.3in]
\item[{\rm (1)}]
\label{item:starfish-1}
all elements of $\tilde\xx$ belong to~$\Rcal$; 
\item[{\rm (2)}]
\label{item:starfish-2}
the cluster variables in $\tilde\xx$ are pairwise coprime; 
\item[{\rm (3)}]
\label{item:starfish-3}
for each cluster variable $x_k\in\tilde\xx$, the seed mutation~$\mu_k$
replaces $x_k$ with an element~$x_k'$ 
(cf.~\eqref{eq:exch-rel-geom})
that lies in $\Rcal$ and is coprime to~$x_k$. 
\end{enumerate}
Then $\AA(\tilde\xx,\tilde B)\subset \Rcal$.
%
\end{proposition}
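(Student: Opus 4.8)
The plan is to show that every cluster variable produced anywhere in the seed pattern of $(\tilde\xx,\tilde B)$ lies in $\Rcal$, by induction on the distance (in the $n$-regular tree $\TT_n$) from the initial seed. The case of distance $0$ is hypothesis (1), and the case of distance $1$ is hypothesis (3), so the content is the inductive step: given a seed $(\tilde\yy,\tilde C)$ all of whose cluster variables lie in $\Rcal$, and which is related to the initial seed by a path of length $\ell$, I want to conclude that for every $k$ the mutation $\mu_k(\tilde\yy,\tilde C)$ also has all cluster variables in $\Rcal$. The only new element is $y_k'$, defined by the exchange relation $y_k y_k' = M^+ + M^-$, where $M^\pm$ are the two monomials in $\tilde\yy\setminus\{y_k\}$ prescribed by $\tilde C$; since $\tilde\yy\subset\Rcal$ by the inductive hypothesis, the right-hand side $M^+ + M^-$ certainly lies in $\Rcal$, and $y_k' = (M^+ + M^-)/y_k$ is at least an element of $\FFcal$. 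The task is to upgrade this to membership in $\Rcal$.

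The main obstacle — and the point where normality enters — is exactly this division step. Since $\Rcal$ is a normal Noetherian domain, an element $f\in\FFcal$ lies in $\Rcal$ as soon as it has no pole along any prime divisor, i.e. as soon as $f\in\Rcal_{\mathfrak p}$ for every height-$1$ prime $\mathfrak p$ (this is the algebraic form of ``regular in codimension one'' for normal Noetherian rings; it is what Karen Smith's suggested argument supplies, replacing the more geometric ``codimension $\ge 2$'' reasoning). So it suffices to fix a height-$1$ prime $\mathfrak p$ and check $y_k'\in\Rcal_{\mathfrak p}$. If $y_k\notin\mathfrak p$, then $y_k$ is a unit in $\Rcal_{\mathfrak p}$ and we are done since $M^+ + M^-\in\Rcal\subset\Rcal_{\mathfrak p}$. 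So assume $y_k\in\mathfrak p$. Here I would use coprimality: I claim that for a height-$1$ prime containing $y_k$, the element $y_k'$ (equivalently $M^++M^-$) is \emph{not} in $\mathfrak p$, whence again $y_k'$ is a unit in $\Rcal_{\mathfrak p}$ — but that is not quite what's needed; rather I need that the \emph{numerator} $M^++M^-$ is divisible (in $\Rcal_{\mathfrak p}$, a DVR) by $y_k$. Equivalently, writing $v=v_{\mathfrak p}$ for the valuation, I must show $v(M^++M^-)\ge v(y_k)$.

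To get this I would propagate coprimality along the tree. The key structural fact is that mutation in direction $k$ changes only the $k$-th cluster variable, so the cluster variables of $\mu_k(\tilde\yy,\tilde C)$ are $\{y_1,\dots,y_{k-1},y_k',y_{k+1},\dots\}$, and I would prove by the same induction the stronger statement: in every seed, the cluster variables (and their mutated partners) are pairwise coprime in $\Rcal$, i.e. no height-$1$ prime contains two of them. For the initial seed this is hypotheses (2) and (3). For the inductive step one checks that $y_k'$ is coprime to each $y_j$ ($j\ne k$): a height-$1$ prime $\mathfrak p$ containing $y_k'$ and some $y_j$ would, via the exchange relation $y_k y_k' = M^+ + M^-$ and the fact that $M^\pm$ are monomials in the $y_i$ ($i\ne k$) — so at least one of $M^+,M^-$ is divisible by $y_j$ hence lies in $\mathfrak p$ — force the other monomial, and therefore $y_k$, into $\mathfrak p$ unless that monomial is a unit mod $\mathfrak p$; a short case analysis using that $y_j$ and $y_k$ are coprime (inductive hypothesis) and $y_j,y_k'$ coprime rules this out. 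Granting this, return to the division step: with $\mathfrak p$ a height-$1$ prime and $y_k\in\mathfrak p$, coprimality of the cluster variables in $(\tilde\yy,\tilde C)$ says no other $y_i$ lies in $\mathfrak p$, so the monomials $M^+$ and $M^-$ are both units in the DVR $\Rcal_{\mathfrak p}$, whence $v(M^++M^-)\ge 0 = v(M^\pm)$ and in fact — and this is the crucial quantitative point — $v(y_k y_k') = v(M^++M^-)$ must be $\ge 0$; but we need $v(y_k')\ge 0$. Since $v(y_k)\ge 1$ and $v(M^++M^-)\ge v(M^+)=0$, this does \emph{not} yet give $v(y_k')\ge0$. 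The fix: $y_k'$ was already shown to lie in $\Rcal$ at distance $1$ for the initial seed, and for later seeds one argues that $y_k$ and $y_k'$ are coprime by the propagated statement, so no height-$1$ prime contains both; combined with $y_k y_k'=M^++M^-$ and the requirement $v(y_k')<0 \Rightarrow v(M^++M^-)<v(y_k)$, and the computation $v(M^++M^-)\ge \min(v(M^+),v(M^-))=0\ge$ the relevant bound when some $y_i$ with $v(y_i)>0$ appears — I would organize this as: if $v(y_k')<0$ then $y_k'$ has a pole at $\mathfrak p$, so $\mathfrak p$ is in the (finite, by Noetherianity) set of divisorial components of the pole locus; on each such $\mathfrak p$, analyze the exchange relation to derive a contradiction with coprimality of $\{y_i\}$. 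I expect assembling this valuation bookkeeping cleanly — making sure the coprimality that propagates is exactly strong enough to kill every potential pole — to be the delicate part; everything else (the tree induction, the fact that only one variable changes per mutation, reduction to codimension one via normality) is routine.
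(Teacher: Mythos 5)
There is a genuine gap, and it sits exactly where you flagged it yourself: the division step. Your plan is to induct along the tree $\TT_n$, showing at each mutation that $y_k'=(M^++M^-)/y_k$ lies in $\Rcal_{\mathfrak p}$ for every height-one prime $\mathfrak p$. When $y_k\in\mathfrak p$, coprimality of the cluster $\tilde\yy$ gives at best $v_{\mathfrak p}(M^\pm)=0$, hence $v_{\mathfrak p}(M^++M^-)\ge 0$; but you need $v_{\mathfrak p}(M^++M^-)\ge v_{\mathfrak p}(y_k)\ge 1$, i.e.\ that the sum of two units of the DVR $\Rcal_{\mathfrak p}$ is divisible by the uniformizer to the right order. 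No amount of pairwise-coprimality bookkeeping delivers this: it is a genuine cancellation statement about the binomial $M^++M^-$, and your proposed ``fix'' (rule out poles by contradiction with propagated coprimality) is circular, since the pole you are trying to exclude is precisely the failure of that divisibility. A second, related problem is that hypotheses (2) and (3) speak only about the initial seed and its $n$ neighbors; your induction needs coprimality statements at arbitrary distant seeds ($y_k$ coprime to $y_k'$, etc.), and establishing those would essentially presuppose that the distant cluster variables already lie in $\Rcal$ and are well behaved there --- which is the conclusion.

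The missing idea is the Laurent Phenomenon, and with it the argument becomes short and avoids any step-by-step division. Fix a distant cluster variable $z$ and a height-one prime $P$. The paper first proves the combinatorial ``starfish'' observation: among the $n+1$ products $x_1\cdots x_n$, $x_1'x_2\cdots x_n$, \dots, $x_1\cdots x_{n-1}x_n'$ (one for the initial cluster and one for each adjacent cluster), at least one lies outside $P$ --- this is exactly what hypotheses (2) and (3) buy, since a height-one prime can contain at most one $x_k$ and then cannot also contain $x_k'$. Choosing the corresponding cluster $\xx'$, the Laurent Phenomenon expresses $z$ as a Laurent polynomial in $\xx'$ with coefficients in $\CC[x_{n+1},\dots,x_m]$, so $z\in\Rcal_P$ because the denominator is a unit there. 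Normality plus Noetherianity then give $z\in\bigcap_{\operatorname{ht}P=1}\Rcal_P=\Rcal$. In other words, you had the right reduction to codimension one, but the pole at a given $P$ is killed not by analyzing the exchange relation at the seed where $z$ is born, but by switching to a Laurent expansion in one of the $n+1$ clusters adjacent to (or equal to) the initial one, where the denominator is invisible to $P$.
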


We will give two proofs of 
Proposition~\ref{prop:cluster-criterion}, one using commutative algebra,
and one using algebraic geometry.  The commutative 
algebra proof relies on two lemmas. 

For~$P$ a prime ideal in~$\Rcal$, let $\Rcal_P=\Rcal[(\Rcal\setminus P)^{-1}]$ denote the localization
of~$\Rcal$ at $\Rcal\setminus P$. 

\begin{lemma}
[{\cite[Theorem~11.5]{matsumura}}]
\label{lem:height-1}
For a normal Noetherian domain~$\Rcal$, 
the natural inclusion
$\Rcal\subset \bigcap_{\operatorname{ht} P=1} \Rcal_P$ (intersection over prime ideals~$P$ of height~$1$)
is an equality. 
\end{lemma}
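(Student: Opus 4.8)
The plan is to prove the two inclusions separately; the inclusion $\Rcal\subseteq\bigcap_{\operatorname{ht}P=1}\Rcal_P$ is immediate since $\Rcal\subseteq\Rcal_P$ for every prime~$P$, so all the content lies in the reverse inclusion, and the tool will be the \emph{denominator ideal} of a putative counterexample. Suppose $x\in\bigcap_{\operatorname{ht}P=1}\Rcal_P$; I want $x\in\Rcal$. Write $x=a/b$ with $a,b\in\Rcal$, $b\neq 0$, and set
\[
I=(\Rcal:_\Rcal x)=\{r\in\Rcal: rx\in\Rcal\}=\{r\in\Rcal: ra\in b\Rcal\}.
\]
Then $1\in I$ iff $x\in\Rcal$, so if $x\notin\Rcal$ the ideal $I$ is proper, and I will derive a contradiction. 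The hypothesis constrains where $I$ can sit: if $P$ is a height-$1$ prime, then $x\in\Rcal_P$ gives $x=c/s$ with $c\in\Rcal$, $s\in\Rcal\setminus P$, whence $sa=cb\in b\Rcal$, i.e.\ $s\in I\setminus P$; thus $I\not\subseteq P$ for \emph{every} prime $P$ of height~$1$.

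Next I would reduce the problem to a statement about associated primes. Since $\Rcal$ is Noetherian and $I$ is a proper ideal, $\Rcal/I$ is a nonzero finitely generated module, so $\operatorname{Ass}(\Rcal/I)\neq\emptyset$; pick $P\in\operatorname{Ass}(\Rcal/I)$. Such a $P$ contains the annihilator $I$ of $\Rcal/I$, so $I\subseteq P$, and by the previous paragraph this forces $\operatorname{ht}P\geq 2$. So it suffices to show that every associated prime of $\Rcal/I$ has height~$1$. For this, pass to $\Rcal/b\Rcal$: the $\Rcal$-linear map $\Rcal\to\Rcal/b\Rcal$, $r\mapsto ra\bmod b\Rcal$, has kernel exactly $I$, hence induces an embedding $\Rcal/I\hookrightarrow\Rcal/b\Rcal$, and therefore $\operatorname{Ass}(\Rcal/I)\subseteq\operatorname{Ass}(\Rcal/b\Rcal)$. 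It remains to prove that every prime in $\operatorname{Ass}(\Rcal/b\Rcal)$ has height~$1$.

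This last point is where normality enters. Minimal primes over $(b)$ have height exactly~$1$: at most $1$ by Krull's principal ideal theorem, and at least $1$ because $b\neq 0$ in the domain~$\Rcal$. That $\Rcal/b\Rcal$ has no \emph{embedded} associated primes is Serre's criterion in action: a Noetherian normal domain satisfies $(S_2)$, and since $b$ is a nonzerodivisor, $\Rcal/b\Rcal$ satisfies $(S_1)$. Concretely, if $Q$ properly contains a height-$1$ prime $P_0\supseteq(b)$, then $\operatorname{ht}Q\geq 2$, so $\operatorname{depth}\Rcal_Q\geq 2$ by $(S_2)$, hence $\operatorname{depth}(\Rcal/b\Rcal)_Q=\operatorname{depth}\Rcal_Q-1\geq 1$, so the maximal ideal of $(\Rcal/b\Rcal)_Q$ is not associated to it, i.e.\ $Q\notin\operatorname{Ass}(\Rcal/b\Rcal)$. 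Thus every $P\in\operatorname{Ass}(\Rcal/b\Rcal)$ is minimal over $(b)$ and has height~$1$. Combining with the two previous paragraphs: the prime $P\in\operatorname{Ass}(\Rcal/I)$ simultaneously has height~$1$ and height~$\geq 2$, a contradiction. Hence $x\in\Rcal$, completing the proof.

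The main obstacle is the height-$1$ claim for associated primes of $\Rcal/b\Rcal$: this is the only step that genuinely uses normality rather than formal manipulation of the denominator ideal, and it rests on the depth-theoretic form of Serre's normality criterion $(R_1)+(S_2)$ (as developed around the cited portion of Matsumura). An alternative packaging is to invoke directly that a Noetherian normal domain is a Krull domain, for which the asserted equality — with each $\Rcal_P$ ($\operatorname{ht}P=1$) a discrete valuation ring — is part of the standard structure theory; but unwinding that assertion reproves essentially the same $(S_2)$ input, so I would keep the associated-primes argument as the backbone.
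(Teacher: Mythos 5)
Your proof is correct. Note that the paper does not prove this lemma at all -- it is quoted directly from the literature (Matsumura, Theorem 11.5) -- so there is no in-paper argument to compare against; what you have written is essentially the standard proof underlying that citation: the denominator ideal $I=(\Rcal:_{\Rcal}x)$ cannot lie in any height-one prime, yet any associated prime of $\Rcal/I$ embeds into $\operatorname{Ass}(\Rcal/b\Rcal)$, whose members all have height one because normality gives $(S_2)$ (no embedded primes) and Krull's principal ideal theorem bounds the minimal ones. The only step you leave implicit is ruling out $\operatorname{ht}P=0$ for $P\in\operatorname{Ass}(\Rcal/I)$, which is immediate since $P\supseteq I\ni b\neq 0$ in a domain; otherwise every step checks out.
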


\begin{lemma}
\label{lem:not-in-P}
Let $P$ be a height~$1$ prime ideal in~$\Rcal$. 
Then at least one~of the $n+1$ products
\[
(x_1\cdots x_n), 
(x_1'x_2\cdots x_n),
\dots,
(x_1\cdots x_{n-1} x_n')
\]
does not belong~to~$P$. 
\end{lemma}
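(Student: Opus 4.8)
The plan is to argue by contradiction: suppose some height~$1$ prime $P$ contains all $n+1$ of the listed products. Since $P$ is prime, containing the product $(x_1\cdots x_n)$ forces $P$ to contain one of the factors $x_j$; after relabeling, we may assume $x_1 \in P$. Now consider the second product $(x_1' x_2 \cdots x_n)$, which also lies in $P$ by assumption; primality gives that $P$ contains either $x_1'$ or some $x_j$ with $2 \le j \le n$.

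The first branch is immediately impossible: if $x_1' \in P$ as well, then $P$ contains both $x_1$ and $x_1'$, contradicting hypothesis~(3) of the Starfish lemma, which says $x_1$ and $x_1'$ are coprime, i.e.\ not contained in a common height~$1$ prime. So instead $P$ contains some $x_j$ with $j \ge 2$, say $x_2 \in P$. But then $P$ contains both $x_1$ and $x_2$, two distinct cluster variables in $\tilde\xx$, contradicting hypothesis~(2), which asserts the cluster variables are pairwise coprime. Either way we reach a contradiction, so no such $P$ exists and at least one of the $n+1$ products avoids $P$.

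The argument is short and essentially purely formal once the coprimality hypotheses are in hand; there is no real calculation. The one point that deserves care — and is the only place the proof could go wrong — is the very first step, where I split the product $(x_1\cdots x_n)$: I should make sure that the relabeling is done once and fixed, so that in the second step I genuinely have a \emph{different} index (or the primed variable) forced into $P$, and thereby land on one of the two stated coprimality hypotheses rather than a vacuous statement. I would also remark explicitly that frozen variables play no role here: the products only involve the cluster variables $x_1,\dots,x_n$ and their mutations, so the coprimality hypotheses~(2) and~(3), which are stated precisely for cluster variables, are exactly what is needed.
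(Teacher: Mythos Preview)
Your proof is correct and takes essentially the same approach as the paper's: use primality on the first product to get some $x_k\in P$, then use the coprimality hypotheses~(2) and~(3) to rule out any other factor of the $k$th product lying in~$P$. The only difference is cosmetic---you phrase it as a proof by contradiction on all $n+1$ products, while the paper argues directly that if the first product lies in~$P$ then the $k$th one does not.
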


\begin{proof}
Suppose that $x_1\cdots x_n\in P$. 
Since $P$ is prime, we have $x_k\in P$ for some~$k\le n$. 
Since $\operatorname{ht} P=1$,
the coprimality assumption~\eqref{item:starfish-2} implies
that $x_j\notin P$ for $j\in\{1,\dots,n\}-\{k\}$.
Similarly, \eqref{item:starfish-3} implies that $x_k'\notin P$. 
Again using that $P$ is prime, we conclude that 
$x_1\cdots x_k'\cdots x_n\notin P$. 
\end{proof}

\begin{proof}[Algebraic proof of Proposition~\ref{prop:cluster-criterion}]
We need to prove that each cluster variable~$z$ 
from any seed 
mutation equivalent to $(\tilde\xx,\tilde B)$ belongs to~$\Rcal$. 
By Lemma~\ref{lem:height-1}, it suffices to show that $z\in \Rcal_P$ for each prime
ideal~$P$ of height~$1$. 
By Lemma~\ref{lem:not-in-P}, for any height $1$ prime $P$ in $\Rcal$,
there exists a cluster~$\xx'$ 
such that $\prod_{x\in\xx'} x\in \Rcal\setminus P$. 
By~the Laurent Phenomenon
(Theorems~\ref{thm:Laurent} and~\ref{th:Laurent-sharper}), 
the cluster variable $z$ can be expressed 
as a Laurent polynomial in the elements of~$\xx'$,
with coefficients in $\CC[x_{n+1},\dots,x_m]$. 
Thus $z\in \Rcal[(\Rcal \setminus P)^{-1}]=\Rcal_P$, as desired. 
\end{proof}


\begin{proof}[Geometric proof of Proposition~\ref{prop:cluster-criterion}]
Our assumptions~on the ring $\Rcal$ mean that 
it can be identified with the coordinate ring of an
(irreducible) normal affine complex 
algebraic variety $X\!=\!\operatorname{Spec}(\Rcal)$. 
Then the field of fractions of~$\Rcal$ is $\operatorname{Frac}(\Rcal)\!=\!\CC(X)$, the field of rational functions on~$X$. 
We need to show that each cluster variable~$z$ 
from any seed 
mutation equivalent to $(\tilde\xx,\tilde B)$ 
belongs to~$\Rcal$. 
The key property that we need is the algebraic version of
\emph{Hartogs' continuation principle} for normal varieties 
(see, e.g., \cite[Chapter~2, 7.1]{danilov}) which asserts that a
function on~$X$ that is regular outside a closed algebraic subset
of codimension~$\ge 2$ is in fact regular everywhere on~$X$. 

Consider the subvariety
\[
	Y=\bigcup_{1 \leq i<j\leq n}\{x_i=x_j=0\} \cup \bigcup_{1 \leq k\leq n} \{x_k=x_k'=0\}\subset X. 
\]
The coprimeness conditions imposed on~$(\tilde\xx,\tilde B)$ 
imply that $\operatorname{codim}(Y)\ge 2$.
By the algebraic Hartogs' principle mentioned above, 
it now suffices to show that $z$ is regular on~$X\setminus Y$. 

\pagebreak[3]
 
The complement $X\setminus Y$ consists of the points $x\in X$ such
that
\begin{itemize}[leftmargin=.2in]
\item
at most one of the cluster variables in $\tilde\xx$ vanishes at~$x$, and
\item
for each pair $(x_k,x_k')$ as above, either $x_k$ or $x_k'$ does not vanish
at~$x$. 
\end{itemize}
Hence there is a seed $(Q',\tilde\xx')$ 
(either the original seed $(\tilde\xx,\tilde B)$ or one of the adjacent seeds 
$\mu_k(\tilde\xx,\tilde B)$) none of whose cluster variables vanishes at~$x$;  
moreover $\tilde\xx'\subset\CC[X]$. 
Then the Laurent Phenomenon (Theorems~\ref{thm:Laurent}
and~\ref{th:Laurent-sharper}) implies that 
our distant cluster variable $z$ is regular at~$x$, as desired. 
\end{proof}

\begin{remark}
The arguments given above actually establish a stronger statement:
under the conditions of Proposition~\ref{prop:cluster-criterion},
the ring~$\Rcal$ contains the \emph{upper cluster algebra}
associated with $\AA(\tilde\xx,\tilde B)$ (see~\cite{ca3}),
or more precisely the subalgebra of  $\FFcal$ consisting of the elements
which, when expressed in terms of any extended cluster,
are Laurent polynomials in the cluster variables and 
ordinary polynomials in the coefficient variables. 
\end{remark}

\begin{remark}
The versions of the Starfish Lemma and the Laurent phenomenon given in~\cite{ca3} (implicit) 
and~\cite{Gross-Hacking-Keel} 
are predicated on invertibility of coefficient variables 
(that is, the ground ring is the ring of Laurent polynomials in the coefficient variables)
and concern the upper cluster algebra.
\end{remark}

\begin{corollary}
\label{cor:cluster-criterion}
Let $\Rcal$ be a finitely generated 
factorial $\CC$-algebra.
Let $(\tilde\xx,\tilde B)$ be a seed in the quotient field of $\Rcal$ 
such that 
all cluster variables of $\tilde\xx$ and all elements of clusters adjacent to~$\tilde\xx$ 
are irreducible elements of~$\Rcal$. 
Then $\AA(\tilde\xx,\tilde B)\subset \Rcal$. 
%
\end{corollary}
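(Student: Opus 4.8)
The plan is to deduce \cref{cor:cluster-criterion} from \cref{prop:cluster-criterion} by checking that the hypotheses of the latter are met. Since $\Rcal$ is finitely generated over $\CC$, it is Noetherian; since it is factorial, it is normal (integrally closed in its fraction field), so $\Rcal$ is a normal Noetherian $\CC$-algebra domain as required. It remains to verify conditions (1), (2), (3) of \cref{prop:cluster-criterion} for the given seed $(\tilde\xx,\tilde B)$.

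First I would address condition (1): the elements of $\tilde\xx$ must lie in $\Rcal$. This is immediate from the hypothesis, since irreducible elements of $\Rcal$ are in particular elements of $\Rcal$ — note the corollary assumes the cluster variables of $\tilde\xx$ are irreducible in $\Rcal$, and the frozen variables $x_{n+1},\dots,x_m$ are among ``all elements of clusters adjacent to $\tilde\xx$'' (they are unchanged under any mutation $\mu_k$), hence they too are assumed irreducible in $\Rcal$, so in particular they lie in $\Rcal$. For condition (3), the hypothesis directly gives that each $x_k'$ obtained by a mutation $\mu_k$ is an element of a cluster adjacent to $\tilde\xx$, hence is irreducible in $\Rcal$, in particular $x_k'\in\Rcal$.

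The heart of the matter is coprimality: I must translate ``irreducible'' into ``coprime'' in the sense defined just before \cref{prop:cluster-criterion}, i.e.\ not contained in a common height-one prime. In a factorial domain, height-one primes are principal, generated by irreducible elements, and two elements are coprime exactly when their gcd is a unit. So for condition (2) I need: distinct cluster variables $x_i,x_j\in\tilde\xx$ have no common irreducible factor. Since each $x_i$ is itself irreducible, a common irreducible factor would force $x_i$ and $x_j$ to be associates (equal up to a unit); but as elements of a cluster they are distinct cluster variables, and moreover they are algebraically independent in $\FFcal$ (being part of a free generating set $\tilde\xx$ of the ambient field), so $x_i$ and $x_j$ cannot be associates. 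The same reasoning handles condition (3)'s coprimality requirement: $x_k$ and $x_k'$ are both irreducible, and the exchange relation $x_k x_k' = (\text{monomial}) + (\text{monomial})$ in the remaining variables shows $x_k$ and $x_k'$ are not associates — indeed if $x_k' = u x_k$ for a unit $u$ then $u x_k^2$ would equal a binomial in the other, algebraically independent, variables, which is impossible by looking at the Laurent expansion, or simply because $x_k \nmid x_k'$ as $x_k'$ is irreducible and not an associate of $x_k$. Hence $x_k$ and $x_k'$ share no common irreducible factor and are coprime.

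The main obstacle, and the only real content, is the argument that an irreducible element cannot divide another irreducible element unless they are associates, combined with the observation that distinct cluster variables (resp.\ $x_k$ and $x_k'$) cannot be associates. The first part is standard in a UFD; the second part uses that $\tilde\xx$ freely generates the ambient field, so its members are algebraically independent, and that in the exchange relation for $x_k'$ the right-hand side is a genuine binomial not divisible by $x_k$. Once coprimality is established, conditions (1)--(3) of \cref{prop:cluster-criterion} all hold, and that proposition yields $\AA(\tilde\xx,\tilde B)\subset\Rcal$, completing the proof.
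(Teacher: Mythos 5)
Your proposal is correct and follows essentially the same route as the paper: reduce to Proposition~\ref{prop:cluster-criterion} (finitely generated $\Rightarrow$ Noetherian, factorial $\Rightarrow$ normal, irreducible $\Rightarrow$ in $\Rcal$), and obtain coprimality from the fact that in a factorial domain two irreducibles fail to be coprime only if they are associates, which is ruled out for elements of $\tilde\xx$ by their algebraic independence and for the pair $x_k,x_k'$ by the exchange relation. The paper's proof is just a terser version of exactly this argument.
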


\begin{proof}
The only conditions in 
Proposition~\ref{prop:cluster-criterion} that we need to check
are the ones concerning coprimality. 
Two elements of~$\tilde\xx$
cannot differ by a scalar factor since they are algebraically independent. 
Similarly, if $x_k$ and $x_k'$ were to differ
by a scalar factor, then the exchange relation~\eqref{eq:exch-rel-geom} would give 
an algebraic dependence in~$\tilde\xx$. 
\end{proof}

Suppose that a $\CC$-algebra~$\Rcal$ satisfies the conditions in the first sentence of 
Proposition~\ref{prop:cluster-criterion}
(or Corollary~\ref{cor:cluster-criterion}). 
In order to identify a cluster structure in~$\Rcal$, 
it suffices to exhibit a seed $(\tilde\xx,\tilde B)$ such that
\begin{itemize}[leftmargin=.3in]
\item[(i)]
all cluster variables
 of $\tilde\xx$ and of the clusters adjacent to~$\tilde\xx$ 
are irreducible elements of~$\Rcal$; 
\item[(ii)]
the seed pattern generated by $(\tilde\xx,\tilde B)$ contains a
generating set for~$\Rcal$. 
\end{itemize}
This is however easier said than done. 

Regarding condition~(ii) above, let us make the following simple observation. 

\begin{proposition}
\label{pr:fin-gen-by-cl-var}
Let $\Rcal$ be a cluster algebra that is finitely generated (over~$\CC$) as a $\CC$-algebra.  
Then $\Rcal$ is generated by a finite subset~of 
cluster and coefficient variables. 
\end{proposition}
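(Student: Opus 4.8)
The plan is to unwind the definition of a cluster algebra and then apply the soft fact that a finitely generated subalgebra of a directed union of subalgebras already lies in one member of the family.

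By Definition~\ref{def:cluster-algebra}, the $\CC$-algebra $\Rcal$ is generated by the set $S$ consisting of all its cluster variables together with all its coefficient variables. For a finite subset $S_0\subseteq S$, put $\Rcal_{S_0}=\CC[S_0]\subseteq\Rcal$. Since the union of two finite sets is finite, the subalgebras $\Rcal_{S_0}$ (indexed by finite $S_0\subseteq S$) form a directed family; and their union is all of $\Rcal$, because $S$ generates $\Rcal$ and every element of $\Rcal$ is a polynomial expression in finitely many elements of~$S$.

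Now I would invoke the hypothesis. Fix a finite set $g_1,\dots,g_N$ of $\CC$-algebra generators of $\Rcal$. For each $j$, the element $g_j\in\Rcal=\bigcup_{S_0}\Rcal_{S_0}$ lies in $\Rcal_{S_j}$ for some finite $S_j\subseteq S$. Set $S_0=S_1\cup\dots\cup S_N$, again a finite subset of~$S$. Then $g_1,\dots,g_N\in\Rcal_{S_0}$, so
\[
\Rcal=\CC[g_1,\dots,g_N]\subseteq\Rcal_{S_0}=\CC[S_0]\subseteq\Rcal,
\]
forcing $\Rcal=\CC[S_0]$. Thus the finite set $S_0$ of cluster and coefficient variables generates $\Rcal$, as claimed.

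I do not expect a genuine obstacle here: the only point needing a word is that each generator $g_j$ involves only finitely many elements of~$S$, which is immediate since a polynomial has finitely many monomials. It is worth recording, though, that the argument is purely existential — it neither bounds $|S_0|$ nor tells us which cluster variables must be retained — which is exactly why, as emphasized just above, arranging condition~(ii) in a concrete ring remains ``easier said than done''.
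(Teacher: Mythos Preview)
Your argument is correct and follows essentially the same approach as the paper's proof: each element of a finite generating set is a polynomial in finitely many cluster and coefficient variables, and the union of these finite subsets generates~$\Rcal$. The directed-union framing is a mild elaboration, but the mathematical content is identical (with the roles of the symbols $S$ and $\mathcal{X}$ swapped relative to the paper's version).
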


\begin{proof}
Let $S$ be a finite generating set for~$\Rcal$, 
and let $\mathcal{X}$ be the set of cluster and coefficient variables. 
Since each $s\in S$ can be written as a polynomial in the elements of a finite
subset $\mathcal{X}_s\!\subset\! \mathcal{X}$, 
we conclude that the finite set $\bigcup_{s\in S}\mathcal{X}_s\subset\Xcal$ generates~$\Rcal$. 
\end{proof}

We next review some general algebraic criteria that can be used 
to check that a given $\CC$-algebra~$\Rcal$ satisfies the conditions in 
Proposition~\ref{prop:cluster-criterion} 
or Corollary~\ref{cor:cluster-criterion}.

The fact that $\Rcal$ is a domain will usually be immediate,
e.g.\ when $\Rcal$ is given as a subring of a polynomial ring. 

Perhaps the most famous result concerning finite generation 
is (the modern version of) Hilbert's Theorem, 
see, e.g., \cite[Theorem~3.5]{popov-vinberg}: 

\begin{theorem} 
\label{th:hilbert}
Let $G$ be a reductive algebraic group acting on an affine algebraic variety~$X$. 
Then the ring of invariants $\CC[X]^G$ is finitely generated. 
\end{theorem}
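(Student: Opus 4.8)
The plan is to carry out Hilbert's classical argument in its modern form, via the Reynolds operator, after first reducing to the case of a \emph{linear} action of~$G$.

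\emph{Reduction to a linear action.} The ring $\CC[X]$ is a finitely generated $\CC$-algebra on which $G$ acts rationally, hence locally finitely, so I would first pick a finite-dimensional $G$-stable subspace $W\subset\CC[X]$ containing a set of algebra generators. The inclusion $W\hookrightarrow\CC[X]$ then extends to a surjection of $G$-algebras $A:=S(W)\twoheadrightarrow\CC[X]$, i.e.\ to a closed $G$-equivariant embedding $X\hookrightarrow W^*$ into a representation of~$G$. Writing $\CC[X]=A/I$ with $I$ a $G$-stable ideal, and using that for reductive~$G$ the functor $M\mapsto M^G$ is exact on rational $G$-modules, one gets $\CC[X]^G=A^G/(I\cap A^G)$. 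Hence it is enough to prove that $A^G$ is finitely generated; then $\CC[X]^G$, a quotient of it, is finitely generated too. Note that $A=S(W)$ is graded with $G$ acting by degree-preserving automorphisms, so $A^G=\bigoplus_{d\ge 0}(A_d)^G$ is a graded subalgebra with $(A_0)^G=\CC$.

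\emph{The Reynolds operator.} Since $G$ is reductive and we work in characteristic zero, every rational $G$-module $M$ splits $G$-equivariantly as $M=M^G\oplus M'$, where $M'$ is the sum of the nontrivial irreducible $G$-submodules; write $\Rcal_M\colon M\to M^G$ for the projection along~$M'$. This is functorial in~$M$. Applied to $M=A$: if $a\in A^G$ then multiplication by~$a$ is $G$-equivariant, whence $\Rcal_A(a\cdot m)=a\cdot\Rcal_A(m)$ for all $m\in A$; and since $G$ respects the grading of~$A$, the operator $\Rcal_A$ is homogeneous of degree~$0$.

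\emph{Hilbert's finiteness argument.} Let $A^G_+\subset A^G$ be the ideal generated by all homogeneous invariants of positive degree, and let $J=A\cdot A^G_+$ be the ideal of~$A$ generated by them. As $A$ is Noetherian, $J$ is a finitely generated ideal; since $J$ is generated by invariants, already finitely many homogeneous elements $f_1,\dots,f_r\in A^G_+$ generate~$J$. I claim that $A^G=\CC[f_1,\dots,f_r]$. Because $A^G$ is graded it suffices to treat a homogeneous invariant $f$ of degree~$d$, and I would induct on~$d$; the case $d=0$ holds since $(A_0)^G=\CC$. For $d>0$, we have $f\in A^G_+\subset J$, so $f=\sum_i a_i f_i$ with each $a_i\in A$ homogeneous of degree $d-\deg f_i<d$. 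Applying $\Rcal_A$ and using the two properties above, $f=\Rcal_A(f)=\sum_i \Rcal_A(a_i)\,f_i$, where each $\Rcal_A(a_i)$ is a homogeneous invariant of degree $<d$, hence lies in $\CC[f_1,\dots,f_r]$ by the inductive hypothesis. Therefore $f\in\CC[f_1,\dots,f_r]$, which completes the induction and the proof.

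I expect the one delicate point to be the existence of the Reynolds operator: this is exactly the complete reducibility of rational $G$-representations, which over~$\CC$ is equivalent to reductivity of~$G$, and it is the only place the hypothesis is used. Everything else is formal, given the equivariant embedding into a representation and the Hilbert basis theorem for the polynomial ring~$A$. (In positive characteristic complete reducibility fails, and finite generation of invariants — Nagata's theorem — requires a substantially different argument; for non-reductive groups, Hilbert's fourteenth problem shows it can fail altogether.)
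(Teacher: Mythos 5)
The paper does not prove this theorem at all: it is stated as a classical result and attributed to the literature (Popov--Vinberg), so there is no in-text argument to compare yours against. Your proof is the standard and correct one: reduce to a linear action via an equivariant closed embedding $X\hookrightarrow W^*$ (using local finiteness of the rational action to find a finite-dimensional $G$-stable generating subspace), pass to the polynomial ring $A=S(W)$ using exactness of invariants, and then run Hilbert's Noetherian induction with the Reynolds operator. All the steps check out, including the two properties of $\Rcal_A$ you actually use (the $A^G$-module property and degree preservation), and you correctly isolate complete reducibility of rational representations---equivalently, linear reductivity, which over $\CC$ is the same as reductivity---as the only place the hypothesis on $G$ enters. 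Your closing remarks about positive characteristic and non-reductive groups are accurate but not needed for the statement as posed, since the paper works over $\CC$ throughout.
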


For the purposes of applying Proposition~\ref{prop:cluster-criterion},
the following version is particularly useful.

\begin{theorem}[{\cite[Proposition~3.1]{dolgachev}}]
\label{th:hilbert-dolgachev}
Let $G$ be a reductive algebraic group acting algebraically on a
normal finitely generated $\CC$-algebra~$A$. 
Then $A^G$ is a normal finitely generated $\CC$-algebra.
\end{theorem}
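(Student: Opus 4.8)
Looking at the final statement, it is Theorem~\ref{th:hilbert-dolgachev}, attributed to Dolgachev, stating that if $G$ is a reductive algebraic group acting algebraically on a normal finitely generated $\CC$-algebra $A$, then $A^G$ is a normal finitely generated $\CC$-algebra.

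\smallskip

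The plan is to treat the two assertions separately, since they require different tools. For \emph{finite generation}, I would reduce to the classical Hilbert finiteness theorem (Theorem~\ref{th:hilbert}). Writing $A = \CC[X]$ with $X = \operatorname{Spec}(A)$ an affine variety, the hypothesis that $G$ acts algebraically on $A$ means precisely that $G$ acts on the affine variety $X$. Then $A^G = \CC[X]^G$ is finitely generated by Theorem~\ref{th:hilbert}. (Strictly, one should note that $A$ being finitely generated over $\CC$ is exactly what makes $X$ an affine variety of finite type, so the cited theorem applies verbatim.) So the finite-generation half is essentially immediate from the result already quoted in the excerpt.

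\smallskip

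For \emph{normality}, the argument is where the real content lies. The key point is that normality is detected by the valuative/Hartogs-type criterion: a finitely generated domain $B$ is normal iff $B = \bigcap_{\operatorname{ht}\mathfrak{p}=1} B_{\mathfrak{p}}$ (this is Lemma~\ref{lem:height-1} applied to $B$, once we know $B$ is Noetherian — which it is, being finitely generated). Alternatively, and more cleanly, I would use: a domain $B$ with fraction field $K_B$ is normal iff it is integrally closed in $K_B$. Now $A^G$ is a subring of the normal domain $A$, and its fraction field $\operatorname{Frac}(A^G)$ sits inside $\operatorname{Frac}(A)$; indeed $\operatorname{Frac}(A^G) = \operatorname{Frac}(A)^G$ (the field of $G$-invariant rational functions) for a reductive group acting rationally — this requires a small argument that an invariant rational function $f/g$ can be rewritten with invariant numerator and denominator, using that the $G$-translates of $g$ span a finite-dimensional space and one can take a suitable product or symmetrization. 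Granting this, suppose $x \in \operatorname{Frac}(A^G)$ is integral over $A^G$. Then $x$ is integral over $A$ as well, and $x \in \operatorname{Frac}(A)$, so by normality of $A$ we get $x \in A$. But $x \in \operatorname{Frac}(A^G) \subset \operatorname{Frac}(A)^G$, so $x$ is $G$-invariant, hence $x \in A \cap \operatorname{Frac}(A)^G = A^G$. Therefore $A^G$ is integrally closed in its fraction field, i.e.\ normal.

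\smallskip

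The main obstacle, and the step deserving the most care, is the identification $\operatorname{Frac}(A^G) = \operatorname{Frac}(A)^G$ — more precisely, the inclusion $\operatorname{Frac}(A)^G \subseteq \operatorname{Frac}(A^G)$, equivalently that every $G$-invariant element of $\operatorname{Frac}(A)$ is a ratio of two $G$-invariants in $A$. For this I would argue: given $G$-invariant $h = f/g$ with $f,g \in A$, consider the ideal $I = \{a \in A : a h \in A\}$ (the ``ideal of denominators''); it is a nonzero $G$-stable ideal, so because $G$ is reductive (linearly reductive in characteristic $0$) the invariant part $I^G$ is nonzero — here is where reductivity is genuinely used, via the Reynolds operator. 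Picking $0 \neq d \in I^G$, we get $dh \in A$ and $dh$ is $G$-invariant, so $h = (dh)/d$ with both $dh, d \in A^G$, as needed. Once this lemma is in hand, the rest of the normality argument is a routine chase through integral closures, and finite generation is quoted directly. I would present the proof in roughly this order: (1) reduce finite generation to Theorem~\ref{th:hilbert}; (2) establish the fraction-field identity via the ideal-of-denominators trick using the Reynolds operator; (3) deduce normality by the integral-closure argument above.
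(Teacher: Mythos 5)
The paper does not actually prove this statement: it is quoted verbatim from Dolgachev with a citation, so there is no in-text argument to compare against. Judged on its own, your proof is correct and is essentially the standard one. Two remarks. First, your normality argument never actually uses the hard inclusion $\operatorname{Frac}(A)^G \subseteq \operatorname{Frac}(A^G)$ that you single out as ``the main obstacle'': the chain ``$x\in\operatorname{Frac}(A^G)$ integral over $A^G$ $\Rightarrow$ $x$ integral over $A$ and $x\in\operatorname{Frac}(A)$ $\Rightarrow$ $x\in A$ by normality of $A$, and $x$ is $G$-fixed because it is a ratio of invariants, so $x\in A^G$'' uses only the trivial inclusion $\operatorname{Frac}(A^G)\subseteq\operatorname{Frac}(A)^G$. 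So the ideal-of-denominators/Reynolds-operator lemma, while correct, is superfluous here; it also makes it look as though reductivity is needed for normality, when in fact normality of $A^G$ holds for an arbitrary group acting on a normal domain, and reductivity enters only through the finite-generation half (Hilbert--Nagata, i.e.\ the paper's Theorem~\ref{th:hilbert}). Second, a minor point of hygiene: your argument tacitly assumes $A$ is a domain (you form $\operatorname{Frac}(A)$); if ``normal'' is meant in the non-domain sense one should first decompose $\operatorname{Spec}(A)$ into connected components permuted by $G$, but under the usual convention this is a non-issue.
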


Even when a group is not reductive, the ring of its invariants may be
finitely generated.
The most important case to us is the following. 

\begin{theorem}[{\cite[Theorem~5.4]{dolgachev}}]
\label{th:A^U-fin-generated}
Let $G$ be a reductive group 
acting rationally on a finitely generated $\CC$-algebra~$A$. 
Let $U$ be a maximal unipotent group of~$G$. 
Then the subalgebra $A^U$ of $U$-invariant elements in~$A$ is finitely
generated over~$\CC$. 
\end{theorem}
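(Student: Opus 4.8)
The plan is to deduce the statement from the reductive case recorded above as Theorem~\ref{th:hilbert}, by a transfer (``reductive exhaustion'') argument. The one genuinely nonformal ingredient is that the algebra $\CC[G]^U$ of left-$U$-invariant regular functions on~$G$ — equivalently, the coordinate ring of the quasi-affine homogeneous space $U\backslash G$ — is itself finitely generated; this is Hadžiev's theorem, and it expresses the fact that a maximal unipotent subgroup is a \emph{Grosshans subgroup}. I would treat this as the heart of the matter (see the final paragraph below) and first show how it yields the assertion.

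Granting finite generation of $\CC[G]^U$: since $G$ acts rationally on~$\CC[G]$ by right translations, $\CC[G]^U$ is a rational $G$-algebra, so $A\otimes_\CC\CC[G]^U$ is a finitely generated $\CC$-algebra equipped with a rational $G$-action, namely the diagonal of the given action on~$A$ and the right-translation action on~$\CC[G]^U$. As $G$ is reductive, Theorem~\ref{th:hilbert} gives that $(A\otimes_\CC\CC[G]^U)^G$ is finitely generated. It then remains to identify this invariant ring with~$A^U$. Viewing an element of $A\otimes_\CC\CC[G]^U$ as an $A$-valued regular function~$\varphi$ on $U\backslash G$, the diagonal $G$-action is $(g\cdot\varphi)(Ux)=g\cdot\varphi(Uxg)$, so a $G$-invariant~$\varphi$ satisfies $\varphi(Ug)=g^{-1}\cdot\varphi(Ue)$; this is well defined on cosets exactly when $a:=\varphi(Ue)$ lies in~$A^U$, and $\varphi\mapsto a$ is then a $\CC$-algebra isomorphism $(A\otimes_\CC\CC[G]^U)^G\cong A^U$, with inverse $a\mapsto(Ug\mapsto g^{-1}\cdot a)$. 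Hence $A^U$ is finitely generated. (Throughout, one uses that a finitely generated $\CC$-algebra with a rational $G$-action is a $G$-equivariant quotient of the polynomial algebra on a finite-dimensional $G$-module, so every ring in sight is of the kind to which Theorem~\ref{th:hilbert} applies.)

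To prove finite generation of $\CC[G]^U$, I would invoke the structure theory of reductive groups. The algebraic Peter–Weyl theorem decomposes $\CC[G]$ as a $(G\times G)$-module into $\bigoplus_{\lambda\in X^+}V(\lambda)\otimes V(\lambda)^*$; taking $U$-invariants on the side on which the highest-weight line is cut out leaves $\CC[G]^U\cong\bigoplus_{\lambda\in X^+}V(\lambda)$, a multiplicity-free sum over the monoid $X^+$ of dominant weights, in which the highest-weight vector $f_\lambda$ of the component $V(\lambda)$ can be normalized so that $f_\lambda f_\mu=f_{\lambda+\mu}$. Let $B\subseteq\CC[G]^U$ be the subalgebra generated by $V(\omega_1)\oplus\dots\oplus V(\omega_r)$, for $\omega_1,\dots,\omega_r$ the fundamental weights; it is $G$-stable. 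For any $\lambda=\sum_i c_i\omega_i\in X^+$ one gets $f_\lambda=\prod_i f_{\omega_i}^{\,c_i}\in B$, hence the $G$-submodule generated by $f_\lambda$, which is $V(\lambda)$, lies in $B$; thus $B=\CC[G]^U$, and since $B$ is generated by the finite-dimensional space $\bigoplus_i V(\omega_i)$ it is finitely generated. The main obstacle is concentrated precisely here: the multiplicity-one decomposition of $\CC[G]^U$, the multiplicativity $f_\lambda f_\mu=f_{\lambda+\mu}$ (equivalently, nonvanishing of Cartan multiplication), and the finite generation of the dominant-weight monoid; once these are in hand, the transfer principle and the reduction to the reductive case are routine. (Alternatively, one can realize $U\backslash G$ as an orbit in a $G$-module whose closure it fills outside a closed subset of codimension $\ge 2$ and invoke the algebraic Hartogs principle already used in Section~\ref{sec:starfish}, but the weight-space argument above is the most self-contained.)
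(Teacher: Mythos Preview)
The paper does not give its own proof of this theorem; it is simply quoted from \cite[Theorem~5.4]{dolgachev} and used as a black box. So there is nothing in the paper to compare your argument against.

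That said, your proof is correct and is precisely the standard ``transfer principle'' argument one finds in Dolgachev and in Grosshans: reduce $A^U$ to $(A\otimes\CC[G]^U)^G$ and invoke the reductive case, with Had\v{z}iev's theorem supplying the finite generation of~$\CC[G]^U$. Your verification of the isomorphism $(A\otimes\CC[G]^U)^G\cong A^U$ is clean, and your multiplicity-one decomposition of $\CC[G]^U$ together with the multiplicativity $f_\lambda f_\mu=f_{\lambda+\mu}$ is the right way to establish finite generation. One small caveat: your final paragraph tacitly assumes $G$ is semisimple when you speak of ``the fundamental weights $\omega_1,\dots,\omega_r$''; for a general reductive group you should also throw in generators for the character lattice of the central torus (equivalently, replace the $\omega_i$ by any finite generating set of the monoid~$X^+$, which is finitely generated regardless). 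With that adjustment the argument is complete.
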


We conclude this section with a couple of factoriality criteria,
see \cite[Theorem~3.17]{popov-vinberg},
\cite{willenbring-zuckerman} and references therein. 

\begin{proposition} 
\label{pr:voskresenskii}
Let $G$ be a connected, simply connected
semi\-simple complex Lie group.
Then the ring of regular functions $\CC[G]$ is factorial. 
\end{proposition}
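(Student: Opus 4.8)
## Proof Proposal for Proposition~\ref{pr:voskresenskii}

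The plan is to reduce the factoriality of $\CC[G]$ to a standard structural fact about the Picard group of a connected, simply connected semisimple group, namely that $\operatorname{Pic}(G) = 0$ in this case. Recall the general principle: for a smooth irreducible affine variety $X$ over $\CC$, the coordinate ring $\CC[X]$ is factorial if and only if $\operatorname{Pic}(X) = 0$ (equivalently, the divisor class group $\operatorname{Cl}(X)$ vanishes, which for smooth $X$ agrees with $\operatorname{Pic}(X)$). Since $G$ is a smooth affine variety, it suffices to prove $\operatorname{Pic}(G) = 0$.

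First I would reduce to a statement about characters. The key input is that $\operatorname{Pic}(G)$ for a connected linear algebraic group $G$ is isomorphic to $\operatorname{Pic}(G)$ computed from a Borel subgroup: more precisely, there is an exact sequence relating $\operatorname{Pic}(G)$ to the character group $X^*(T)$ of a maximal torus $T$ modulo the sublattice spanned by the characters extending to $\widetilde{G}$, or—more cleanly—one uses that $\operatorname{Pic}(G) \cong X^*(\ker(\widetilde{G} \to G))^\vee$ where $\widetilde{G}$ is the simply connected cover. When $G$ is already simply connected, this kernel is trivial, so $\operatorname{Pic}(G) = 0$. An alternative, more elementary route: decompose $G$ using the big cell $U^- \times T \times U^+ \hookrightarrow G$, which is an open subset isomorphic to an affine space times a torus; its complement is a union of the codimension-one Bruhat divisors $\overline{BsB}$ indexed by simple reflections, so $\operatorname{Cl}(G)$ is generated by the classes of these divisors, and the relations among them are governed by the fundamental weights—each fundamental weight $\varpi_i$ being a genuine character of $G$ (not merely of $\widetilde{G}$) precisely because $G$ is simply connected, which forces all these generators to be trivial in $\operatorname{Cl}(G)$.

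The main steps in order: (1) observe $G$ is smooth, affine, irreducible, so factoriality of $\CC[G]$ is equivalent to $\operatorname{Cl}(G) = 0$; (2) exhibit the big Bruhat cell as an open subvariety whose complement has pure codimension one with irreducible components $D_1, \dots, D_\ell$ (one per simple root), so that $\operatorname{Cl}(G)$ is generated by $[D_1], \dots, [D_\ell]$; (3) for each fundamental weight $\varpi_i$, use the corresponding matrix coefficient of the irreducible representation with highest weight $\varpi_i$—which is a regular function on $G$ since $\varpi_i \in X^*(T)$ lifts to a character of $G$ by simple-connectedness—to produce a principal divisor equal to $D_i$ up to the complement, showing $[D_i] = 0$; (4) conclude $\operatorname{Cl}(G) = 0$ and hence $\CC[G]$ is a UFD.

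The hard part will be step (3): making precise why simple-connectedness is exactly what guarantees the fundamental weights are characters of $G$ and not just of its cover, and then carefully tracking the divisor of the associated matrix coefficient to identify it with a single Bruhat divisor. This is where the representation theory of $G$ enters essentially, and one must be careful that the multiplicities in the divisor are all one. The remaining steps are standard facts about class groups of smooth varieties and about the Bruhat decomposition, so they can be cited rather than reproved. I would reference \cite[Theorem~3.17]{popov-vinberg} and \cite{willenbring-zuckerman} for the packaging of this argument, since the statement in the excerpt explicitly points there.
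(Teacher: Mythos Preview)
The paper does not actually prove this proposition: it is stated without proof as one of several ``general algebraic criteria,'' with a pointer to \cite[Theorem~3.17]{popov-vinberg} and \cite{willenbring-zuckerman}. So there is nothing to compare against in the paper itself.

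Your outline is essentially the standard argument (due to Popov and Voskresenski\u{\i}) that one finds by following those references: reduce factoriality of $\CC[G]$ to $\operatorname{Cl}(G)=0$ via smoothness, generate $\operatorname{Cl}(G)$ by the Bruhat divisors $D_i$ in the complement of the big cell, and kill each $[D_i]$ using the divisor of a highest-weight matrix coefficient for the fundamental representation $V(\varpi_i)$. This is correct and is exactly what the cited sources do.

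One imprecision worth fixing: in step~(3) you write that ``$\varpi_i \in X^*(T)$ lifts to a character of~$G$ by simple-connectedness.'' That is not quite the right statement---$\varpi_i$ is a character of~$T$, not of~$G$. What simple-connectedness buys you is that the character lattice $X^*(T)$ coincides with the full weight lattice, so that each fundamental weight $\varpi_i$ is the highest weight of an honest irreducible representation of~$G$ (not merely of its universal cover). The regular function you want is then the matrix coefficient $g \mapsto \langle v^*, g\cdot v\rangle$ where $v$ is a highest weight vector and $v^*$ a lowest weight covector; its divisor of zeros is exactly~$D_i$ with multiplicity one. With that correction, your plan is sound.
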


\begin{theorem} 
\label{th:inv-ring-is-factorial}
Let $G$ be a connected algebraic group 
acting on an affine algebraic variety~$X$. 
If $G$ has no nontrivial characters and $\CC[X]$ is factorial, then so is $\CC[X]^G$. 
\end{theorem}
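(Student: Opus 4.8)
Write $A=\CC[X]$ and $B=\CC[X]^G$; since $A$ is a domain (being factorial), so is the subring~$B$, and $\CC\subseteq B$. The plan is to apply the standard criterion (Kaplansky) that an integral domain is factorial if and only if every nonzero prime ideal contains a nonzero prime element. The key point, on which everything rests, will be the following \emph{lemma}: every irreducible element of~$A$ that divides some nonzero element of~$B$ already belongs to~$B$.

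Granting the lemma, the endgame is short. Given a nonzero prime ideal $\mathfrak p\subseteq B$, I~would pick a nonzero $f\in\mathfrak p$ (necessarily a non-unit) and factor $f=p_1\cdots p_k$ into irreducibles in the factorial ring~$A$. By the lemma each $p_i$ lies in~$B$, and since $\mathfrak p$ is prime some $p_i$ lies in~$\mathfrak p$. I~would then check that this $p_i$ is a prime element of~$B$: if $p_i\mid ab$ with $a,b\in B$, then $p_i\mid ab$ in~$A$, so $p_i\mid a$ in~$A$ (say), $p_i$ being prime in the factorial ring~$A$; and the quotient $a/p_i\in A$ is $G$-invariant, being a ratio of $G$-invariants inside the domain~$A$, so $p_i\mid a$ already in~$B$. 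Thus every nonzero prime of~$B$ contains a nonzero prime element, and $B$ is factorial.

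It remains to prove the lemma, which is where the hypotheses on~$G$ get used. I~would fix a nonzero $f\in B$ and look at the (finite) set of height-one primes of~$A$ containing~$f$, namely the principal ideals $p_1A,\dots,p_rA$ generated by the distinct irreducible factors of~$f$. Since $G$ acts on~$A$ by $\CC$-algebra automorphisms fixing~$f$, it permutes this finite set, giving a homomorphism $G\to S_r$. Because the $G$-action on~$A$ is rational---every element of~$A$ lies in a finite-dimensional $G$-stable subspace on which $G$ acts algebraically---the stabilizer in~$G$ of each line $\CC p_i$ is Zariski closed; hence the kernel of this homomorphism is a closed subgroup of finite index, and as $G$ is connected it must equal~$G$. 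So $g\cdot p_i=\chi_i(g)\,p_i$ for all $g\in G$, where $\chi_i\colon G\to\CC^{*}$ is a character (it is multiplicative, and a morphism of algebraic groups by rationality of the action); since $G$ has no nontrivial characters, $\chi_i\equiv 1$, i.e.\ $p_i\in B$. This proves the lemma.

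The main obstacle, as the above makes clear, is the lemma itself: the implication ``$G$ fixes~$f$'' $\Rightarrow$ ``$G$ fixes each irreducible factor of~$f$ up to a scalar'' forces one to use both the connectedness of~$G$ (to trivialize a permutation action of~$G$ on a finite set) and the rationality of the $G$-action on~$\CC[X]$ (to see the relevant stabilizers are closed and the resulting weights are characters); only after that does the no-nontrivial-characters hypothesis do its job, turning these semi-invariants into invariants. Everything else is routine unique-factorization bookkeeping. (An alternative packaging, parallel to the algebraic proof of \cref{prop:cluster-criterion}, would use the lemma to identify the irreducibles of~$B$ with the $G$-invariant irreducibles of~$A$ and then transport existence and uniqueness of factorizations from~$A$ to~$B$.)
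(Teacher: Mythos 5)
The paper does not actually prove this statement: it is quoted from the literature (it cites Popov--Vinberg, Theorem~3.17, and Willenbring--Zuckerman), so there is no in-text proof to compare against. Your argument is the standard proof of that result, and its architecture --- reduce to Kaplansky's criterion, and prove the key lemma that every irreducible factor of a nonzero invariant is itself invariant by using connectedness to trivialize the permutation of the factors and the absence of nontrivial characters to trivialize the resulting weights --- is exactly the right one.

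There is, however, one genuine gap, in the step where you pass from ``$g$ stabilizes the height-one prime $p_iA$'' to ``$g\cdot p_i=\chi_i(g)\,p_i$ with $\chi_i(g)\in\CC^{\times}$''. Unique factorization in $A=\CC[X]$ only gives that $g\cdot p_i$ is an \emph{associate} of $p_i$, i.e.\ $g\cdot p_i=u_i(g)\,p_i$ with $u_i(g)\in A^{\times}$, and a factorial affine coordinate ring can have non-scalar units (e.g.\ $\CC[t,t^{-1}]$); your identification of $\operatorname{Stab}(p_iA)$ with the stabilizer of the line $\CC p_i$ silently assumes $A^{\times}=\CC^{\times}$. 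In general $u_i$ is only a cocycle, $u_i(gh)=u_i(g)\,(g\cdot u_i(h))$, not a homomorphism to $\CC^{\times}$. To close the gap one invokes Rosenlicht's unit theorem: $A^{\times}/\CC^{\times}$ is a finitely generated free abelian group on which the connected group $G$ acts trivially, so $g\mapsto[u_i(g)]$ becomes a homomorphism from $G$ to a free abelian group, whose image is divisible and hence trivial; thus $u_i(g)\in\CC^{\times}$, $u_i=\chi_i$ is an honest character, and your argument concludes as written. (In the applications made in this chapter --- $A$ a polynomial ring, or $\CC[G]$ with $G$ semisimple and simply connected --- one has $A^{\times}=\CC^{\times}$ and the issue does not arise.) The remaining steps are correct; the only other point worth a line is that a nonzero element $f$ of a proper prime ideal of $B$ cannot be a unit of $A$, since $f\in B\cap A^{\times}$ forces $f^{-1}\in B$, so $f$ does admit an irreducible factorization in $A$.
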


\begin{remark}
The paper~\cite{gls-factorial} provides factoriality criteria for cluster algebras. 
It also shows that a cluster algebra contains no nontrivial units,
and all cluster variables are irreducible elements. 
\end{remark}

\newpage

\section{Cluster structure in the ring $\CC[\SL_k]^U$}
\label{sec:rings-baseaffine}

In this section, we identify a cluster algebra structure in the ring $\CC[\SL_k]^U$, 
the coordinate ring of the basic affine space for the special linear
group. This ring made its first appearance in
Section~\ref{sec:baseaffine}. 
We start by reviewing the key features of this construction. 

Let $V\cong\CC^k$ be a $k$-dimensional complex vector space.
After choosing a basis in~$V$, we can identify $\SL_k$ with 
the special linear group $G=\SL(V)$ of complex matrices with determinant~1.   
The subgroup $U\subset G$ of unipotent lower-triangular matrices
acts on~$G$ by left multiplication. 
This action induces the action of~$U$ on the coordinate ring~$\CC[G]$.
We will show that the ring $\CC[G]^U$ 
of $U$-invariant regular functions on~$G$ has a natural structure of a
cluster algebra. 

We note that $U$ is not reductive, so Theorem~\ref{th:hilbert} does not
apply. 
Still, $\CC[G]^U$ is finitely generated by
Theorem~\ref{th:A^U-fin-generated}. 
As mentioned in Section~\ref{sec:baseaffine}, 
this can be made explicit as follows. 
Recall that a \emph{flag minor} $P_J$ of a $k\times k$ matrix~$z$ 
(here $J\subset\{1,\dots,k\}$) 
is the determinant of the submatrix of~$z$ occupying the columns
labeled by~$J$ and the rows labeled $1, 2, \dots,|J|$. 

\begin{theorem}
\label{th:generating-G/U}
The ring of invariants $\CC[\SL_k]^U$ 
is generated by the $2^k-2$ \emph{flag minors}~$P_J$; 
here $J$ runs over nonempty proper subsets of~$\{1,\dots,k\}$. 
\end{theorem}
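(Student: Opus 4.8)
The plan is to show that the $U$-invariant functions on $G=\SL_k$ are precisely the functions that can be built from the flag minors. The starting point is the classical observation that $\CC[G]$ decomposes, as a $G\times G$-module under left and right translation, into a sum $\bigoplus_\lambda V_\lambda^*\otimes V_\lambda$ over dominant integral weights $\lambda$ (the Peter--Weyl / algebraic Borel--Weil decomposition); here $\lambda$ ranges over nonnegative integer combinations of the fundamental weights $\omega_1,\dots,\omega_{k-1}$, since $G$ is simply connected. Taking $U$-invariants on the \emph{left} kills the $V_\lambda^*$ factor down to its line of $U$-invariant (lowest-weight, or highest-weight depending on conventions) vectors, so that $\CC[G]^U\cong\bigoplus_\lambda V_\lambda$ as a right $G$-module, with each irreducible $V_\lambda$ occurring exactly once. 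This already tells us the answer has ``one irreducible summand for each dominant weight,'' and that $\CC[G]^U$ is a multiplicity-free $G$-algebra.

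The second step is to identify the flag minors inside this picture. For a nonempty proper subset $J\subset\{1,\dots,k\}$ with $|J|=j$, the minor $P_J$ is, up to the $U$-action on the left being trivial, the matrix coefficient of $G$ acting on $\Lambda^j V$ pairing the line $e_1\wedge\cdots\wedge e_j$ (which is $U^{-}$-stable, hence gives a $U$-invariant under left translation) against the vector $e_J\in\Lambda^j V$. Thus $P_J\in\CC[G]^U$, and as $J$ ranges over all $j$-subsets these $P_J$ span the copy of $V_{\omega_j}\cong\Lambda^j V$ sitting inside $\CC[G]^U$. Hence the subalgebra $R$ generated by all $P_J$ contains each fundamental summand $V_{\omega_j}$, $1\le j\le k-1$.

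The third and decisive step is to show that multiplying these fundamental pieces fills out everything: one needs that for each dominant $\lambda=\sum c_j\omega_j$ the Cartan component $V_\lambda\subset V_{\omega_1}^{\otimes c_1}\otimes\cdots\otimes V_{\omega_{k-1}}^{\otimes c_{k-1}}$ survives under the multiplication map $\CC[G]^U\otimes\CC[G]^U\to\CC[G]^U$, i.e.\ that the product of the highest-weight vectors in the factors is a nonzero highest-weight vector of weight $\lambda$. This is where the real content is, and I expect it to be the main obstacle; the clean way to see it is to note that on the open Bruhat cell $U^-\cdot B\subset G$ (or concretely, at the identity matrix and its $U^-$-translates) each generalized minor $P_{\omega_j}=P_{\{1,\dots,j\}}$ is nonvanishing, so the product of the corresponding highest-weight matrix coefficients is a nonzero function, forcing the Cartan component to be nonzero. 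Equivalently, one invokes that the principal (``initial'') flag minors $P_{\{1\}},P_{\{1,2\}},\dots,P_{\{1,\dots,k-1\}}$ are algebraically independent and that their monomials realize all the distinct extremal weights, so no fundamental summand can be lost.

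Finally, counting: $J$ runs over the $2^k-2$ nonempty proper subsets of $\{1,\dots,k\}$, which is the asserted number of flag minors. Assembling the three steps, $R=\bigoplus_\lambda V_\lambda=\CC[G]^U$, which is the claim. (Finite generation itself is already guaranteed by Theorem~\ref{th:A^U-fin-generated}, so the theorem is really the sharper statement that this particular finite set of invariants suffices.)
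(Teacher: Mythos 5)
Your argument is correct and is exactly the classical representation-theoretic route the paper intends: the text gives no detailed proof of this theorem but defers to the standard construction of the irreducible representations of $\SL_k$ (citing Fulton--Harris and Procesi), namely the multiplicity-free decomposition $\CC[G]^U\cong\bigoplus_\lambda V_\lambda$, the identification of the flag minors with column sets of size $j$ as spanning the fundamental summand $V_{\omega_j}\cong\Lambda^j V$, and the Cartan-component argument (nonvanishing of the principal minors on the open cell, or simply that $\CC[G]$ is a domain) to see that products of these fill out every $V_\lambda$. The same picture, including the explicit description of the $\lambda$-graded piece as the polynomials of multidegree $(d_1,d_2,\dots)$ in the flag minors, is recorded at the end of the paper's section on $\CC[\SL_k]^U$, so your proposal matches the paper's approach.
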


The ideal of relations satisfied by the flag minors is generated by
certain generalized Grassmann-Pl\"ucker relations
(which we will not rely upon). 

Theorem~\ref{th:generating-G/U} is a consequence of the classical
construction of irreducible representations of special linear groups,
see, e.g.,~\cite{fulton-harris, procesi-textbook}. 
This construction generalizes to an arbitrary connected, simply connected
semi\-simple complex Lie group~$G$ and its maximal unipotent
subgroup~$U$.
The role of flag minors is played by certain matrix elements in
fundamental representations of~$G$. 
See the end of this section for additional details. 

\begin{corollary}
The ring $\CC[\SL_k]^U$ is factorial.
\end{corollary}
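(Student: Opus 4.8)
The goal is to deduce factoriality of $\CC[\SL_k]^U$ from factoriality of $\CC[\SL_k]$ together with the structural results already collected in this section. The natural tool is Theorem~\ref{th:inv-ring-is-factorial}: if a connected algebraic group $H$ acts on an affine variety $Y$ with $\CC[Y]$ factorial and $H$ has no nontrivial characters, then $\CC[Y]^H$ is factorial. So the plan is to verify the two hypotheses of that theorem with $Y=\SL_k$ and $H=U$.

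First I would record that $\CC[\SL_k]$ is factorial: $\SL_k$ is a connected, simply connected semisimple complex Lie group, so this is exactly Proposition~\ref{pr:voskresenskii}. Next I would check that $U$, the group of unipotent lower-triangular matrices in $\SL_k$, is connected (it is isomorphic as a variety to an affine space $\CC^{\binom k2}$, via the entries strictly below the diagonal) and has no nontrivial characters. For the latter point, $U$ is generated by its one-parameter root subgroups, each isomorphic to the additive group $\mathbb{G}_a$; since any character of $\mathbb{G}_a$ is trivial and $U$ is generated by these subgroups, every character of $U$ is trivial. (Equivalently, $U$ is unipotent, hence has no nontrivial characters because $\mathbb{G}_a$ has none and $U$ has a composition series with $\mathbb{G}_a$ quotients.) With both hypotheses in hand, Theorem~\ref{th:inv-ring-is-factorial} applies directly and yields that $\CC[\SL_k]^U$ is factorial.

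There is essentially no main obstacle here: the corollary is a one-line consequence of Proposition~\ref{pr:voskresenskii} and Theorem~\ref{th:inv-ring-is-factorial}, and the only thing requiring a word of justification is that the unipotent group $U$ admits no nontrivial characters, which is standard. I would state the proof in two or three sentences, citing Proposition~\ref{pr:voskresenskii} for factoriality of $\CC[\SL_k]$, noting that $U$ is connected with trivial character group because it is unipotent, and invoking Theorem~\ref{th:inv-ring-is-factorial} to conclude.
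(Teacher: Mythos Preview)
Your proposal is correct and follows exactly the same route as the paper: apply Proposition~\ref{pr:voskresenskii} to get factoriality of $\CC[\SL_k]$, then invoke Theorem~\ref{th:inv-ring-is-factorial} with $H=U$. The only cosmetic difference is in how the triviality of characters of $U$ is justified: the paper observes that $\CC[U]$ is a polynomial ring and hence has no nontrivial units (so any character, being an invertible regular function, is constant), whereas you argue via the composition series of $U$ with $\mathbb{G}_a$ quotients; both are standard and equally valid.
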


\begin{proof}
This follows from Proposition~\ref{pr:voskresenskii} and 
Theorem~\ref{th:inv-ring-is-factorial}. 
(The polynomial ring $\CC[U]$ has no nontrivial units.) 
\end{proof}


\begin{definition}
\label{def:seedwd}
Let $D$ be a wiring diagram with $k$ strands.  
Let $\FFcal$  be the field of rational functions in the 
chamber minors of~$D$, cf.\ Section~\ref{sec:baseaffine}. 
We associate to $D$ the 
pair $(\tilde \xx(D), \tilde B(D))$, where
\begin{itemize}[leftmargin=.2in]
\item $\tilde \xx(D)$ consists of the {chamber minors}
of~$D$, listed so that the minors indexed by the bounded chambers precede the 
minors indexed by the unbounded ones;
\item $\tilde B(D)
$ 
is the signed adjacency matrix of the quiver $Q(D)$
from Definition~\ref{def:quiverwd}.
\end{itemize}
Note that $\tilde B(D)$ is 
a $\frac{(k-1)(k+2)}{2} \times \frac{(k-1)(k-2)}{2}$ 
integer matrix whose rows are indexed by all chambers
and whose columns are indexed by the bounded chambers.
The minors corresponding to the bounded (resp., unbounded) 
chambers are the cluster variables (resp., frozen variables) of this seed.
\end{definition}



\begin{theorem}
Let $G = \SL_k(\CC)$. 
\begin{enumerate}
\item
\label{item:SLk/U-1}
For any wiring diagram with $k$ strands, 
the pair $(\tilde \xx(D), \tilde B(D))$ is a seed in the
  field of fractions for $\CC[G]^U$. 

\item
\label{item:SLk/U-2}
All seeds $(\tilde \xx(D), \tilde B(D))$ are mutation equivalent
to each other.

\item
\label{item:SLk/U-3}
The seed pattern containing the seeds $(\tilde \xx(D), \tilde B(D))$
defines a cluster algebra structure in~$\CC[G]^U$. 
That is, $\AA(\tilde \xx(D), \tilde B(D))=\CC[G]^U$. 
\end{enumerate}
\end{theorem}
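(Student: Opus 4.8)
The plan is to prove the three parts in sequence, with Part~(3) being where all the real work happens. For Part~(1), I need to check that each $(\tilde\xx(D),\tilde B(D))$ is a genuine seed in the fraction field of $\CC[G]^U$. The chamber minors of a wiring diagram $D$ with $k$ strands are flag minors $P_J$, and by Theorem~\ref{th:generating-G/U} these lie in $\CC[G]^U$; I would recall from Section~\ref{sec:baseaffine} that the chamber minors of any single wiring diagram are algebraically independent and that their number $\binom{k-1}{2}+\binom{k}{2}-1 = \frac{(k-1)(k+2)}{2}\cdot\frac{?}{}$ — more precisely, that the total count equals $\dim(G/U) = \dim G - \dim U$, so that they form a transcendence basis of $\CC(G/U)$. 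Combined with the fact that $\tilde B(D)$, the signed adjacency matrix of $Q(D)$, has full rank equal to the number of bounded chambers, this gives that $(\tilde\xx(D),\tilde B(D))$ is a seed.

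For Part~(2), I would invoke the combinatorics of wiring diagrams developed earlier: any two wiring diagrams with $k$ strands are connected by a sequence of local moves (braid moves / the ``moves'' of \cref{fig:moves}), and each such local move corresponds to a seed mutation $\mu_k$ at the chamber being modified — this is exactly the content of Section~\ref{sec:mut-double-wiring} and the three-term Grassmann--Pl\"ucker relation \eqref{eq:grassmann-plucker-3term}, which is the exchange relation produced. So all the seeds $(\tilde\xx(D),\tilde B(D))$ lie in a single mutation class, and the seed pattern they generate is well-defined independent of the starting~$D$.

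For Part~(3), I would prove the two inclusions separately. The inclusion $\AA(\tilde\xx(D),\tilde B(D)) \subset \CC[G]^U$ follows from the Starfish Lemma (Proposition~\ref{prop:cluster-criterion}), or more conveniently its Corollary~\ref{cor:cluster-criterion}, since $\CC[G]^U$ is factorial (by the Corollary following Theorem~\ref{th:generating-G/U}) and finitely generated: I need only check that all chamber minors of $D$, and all chamber minors of the wiring diagrams adjacent to $D$ under a single move, are irreducible elements of $\CC[G]^U$. Flag minors $P_J$ are irreducible in $\CC[G]$ (a standard fact, e.g.\ via the remark on \cite{gls-factorial}, or directly), and their images in $\CC[G]^U$ remain irreducible; the new variable produced by a move is again a flag minor (an adjacent chamber minor), hence irreducible. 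For the reverse inclusion $\AA(\tilde\xx(D),\tilde B(D)) \supset \CC[G]^U$, I would use Theorem~\ref{th:generating-G/U}: $\CC[G]^U$ is generated by \emph{all} $2^k-2$ flag minors $P_J$, so it suffices to show every flag minor appears as a cluster or frozen variable somewhere in the seed pattern. Each $P_J$ is a chamber minor of \emph{some} wiring diagram with $k$ strands (every subset $J$ indexes a chamber in a suitably chosen diagram — this is part of the chamber-minor dictionary from Section~\ref{sec:baseaffine}), and since all such diagrams give seeds in one mutation class by Part~(2), every $P_J$ is a cluster or coefficient variable of $\AA(\tilde\xx(D),\tilde B(D))$.

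The main obstacle is the reverse inclusion in Part~(3): one must be certain that \emph{every} flag minor $P_J$ — not just those visible in a single wiring diagram — is realized as a chamber minor of some wiring diagram with $k$ strands. This is a combinatorial fact about wiring diagrams (each proper nonempty $J\subset\{1,\dots,k\}$ labels a chamber in an appropriate diagram, e.g.\ one can sort the strands so that $J$ is a ``prefix set'' above a chosen vertical line), and once it is in hand the result is immediate from Part~(2) together with Theorem~\ref{th:generating-G/U}. A secondary subtlety is verifying that the chamber minors of a single diagram really do have cardinality equal to $\dim(G/U)$ and are algebraically independent — but this was established in Section~\ref{sec:baseaffine}, so I would simply cite it.
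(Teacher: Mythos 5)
There is a genuine gap in your Part~(3), in the inclusion $\AA(\tilde\xx(D),\tilde B(D))\subset\CC[G]^U$. You assert that ``the new variable produced by a move is again a flag minor (an adjacent chamber minor), hence irreducible,'' but this conflates two different things: the local (braid) moves on wiring diagrams realize only \emph{some} of the mutations of the seed $(\tilde\xx(D),\tilde B(D))$, whereas the Starfish Lemma requires you to control the element $x_\ell'$ produced by mutation at \emph{every} mutable vertex of $Q(D)$. For the special wiring diagram the paper chooses, only the bottom-row mutations come from braid moves and yield flag minors; at the remaining mutable vertices the exchange relation produces the element
\[
\Omega=\dfrac{P_{Jabc}\,P_{Jcd}\,P_{Jb}+P_{Jab}\,P_{Jbcd}\,P_{Jc}}{P_{Jbc}}\,,
\]
which is a priori only a rational function and is certainly not a flag minor. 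The bulk of the paper's proof is devoted precisely to these vertices: one shows via Muir's Law (Proposition~\ref{prop:Muir}) that $\Omega=-P_{Ja}P_{Jbcd}+P_{Jb}P_{Jacd}$, hence $\Omega\in\CC[G]^U$, and then proves coprimality of $\Omega$ with $P_{Jbc}$ by a specialization argument (setting the top $|J|+2$ entries of column $a$ to zero and using irreducibility of $P_{Jbc}$). Without this step your verification of the hypotheses of Proposition~\ref{prop:cluster-criterion} (or Corollary~\ref{cor:cluster-criterion}) is incomplete; note also that the irreducibility route of the corollary is ill-suited here, since $\Omega$ is a quadratic expression in flag minors whose irreducibility is not evident, which is why the paper works with the coprimality version instead.

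The rest of your outline matches the paper. Parts (1) and (2) are handled the same way (the paper deduces algebraic independence of $\tilde\xx(D)$ from the generation statement of Theorem~\ref{th:generating-G/U} plus the count $|\tilde\xx(D)|=\dim(U\backslash G)$, essentially the dimension argument you sketch; the full-rank condition on $\tilde B(D)$ you mention is not needed), and the reverse inclusion $\CC[G]^U\subset\AA(\tilde\xx(D),\tilde B(D))$ is exactly as you describe: every flag minor is a chamber minor of some wiring diagram, and all such seeds are mutation equivalent. The ``main obstacle'' you identify is real but routine; the one you missed is where the actual work lies.
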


\begin{proof}
Statement~\eqref{item:SLk/U-2} is part of 
Exercise~\ref{exercise:quiverexchange}. 
We then conclude that any flag minor can be expressed 
as a rational function in the elements of a given extended cluster~$\tilde\xx(D)$. 
Since 
$|\tilde\xx(D)|=\frac{(k-1)(k+2)}{2}=\dim(U\backslash G)$, 
it follows that the elements of $\tilde\xx(D)$ are algebraically independent, proving
statement~\eqref{item:SLk/U-1}. 

It remains to prove statement~\eqref{item:SLk/U-3}.
Since each flag minor appears in~some  extended
cluster~$\tilde\xx(D)$, Theorem~\ref{th:generating-G/U} implies 
that $\AA(\tilde \xx(D), \tilde B(D))\supset\CC[G]^U$. 

We prove the inclusion $\AA(\tilde \xx(D), \tilde
B(D))\subset\CC[G]^U$
using Proposition~\ref{prop:cluster-criterion}. 
Let us choose the seed associated to the wiring diagram~$D$ 
of the kind shown in Figure~\ref{fig:special-wiring}. 
The quiver is shown in Figure \ref{fig:special-wiring-seed}.

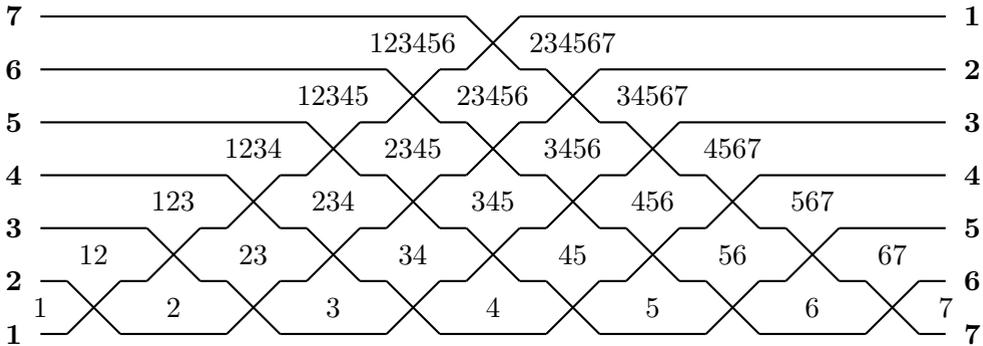
\begin{figure}[ht]
\setlength{\unitlength}{1pt}
\begin{center}
\begin{picture}(340,120)(0,5)
\thicklines
\dark{

  \put(0,0){\line(1,0){10}}
  \multiput(30,0)(60,0){5}{\line(1,0){40}}
  \put(330,0){\line(1,0){10}}

  \put(0,20){\line(1,0){10}}
  \multiput(30,20)(30,0){10}{\line(1,0){10}}
  \put(330,20){\line(1,0){10}}

  \put(0,40){\line(1,0){40}}
  \multiput(60,40)(30,0){8}{\line(1,0){10}}
  \put(300,40){\line(1,0){40}}

  \put(0,60){\line(1,0){70}}
  \multiput(90,60)(30,0){6}{\line(1,0){10}}
  \put(270,60){\line(1,0){70}}

  \put(0,80){\line(1,0){100}}
  \multiput(120,80)(30,0){4}{\line(1,0){10}}
  \put(240,80){\line(1,0){100}}

  \put(0,100){\line(1,0){130}}
  \multiput(150,100)(30,0){2}{\line(1,0){10}}
  \put(210,100){\line(1,0){130}}

  \put(0,120){\line(1,0){160}}
  \put(180,120){\line(1,0){160}}

  \multiput(10,0)(60,0){6}{\line(1,1){20}}
  \multiput(10,20)(60,0){6}{\line(1,-1){20}}
  \multiput(40,20)(60,0){5}{\line(1,1){20}}
  \multiput(40,40)(60,0){5}{\line(1,-1){20}}
  \multiput(70,40)(60,0){4}{\line(1,1){20}}
  \multiput(70,60)(60,0){4}{\line(1,-1){20}}
  \multiput(100,60)(60,0){3}{\line(1,1){20}}
  \multiput(100,80)(60,0){3}{\line(1,-1){20}}
  \multiput(130,80)(60,0){2}{\line(1,1){20}}
  \multiput(130,100)(60,0){2}{\line(1,-1){20}}
  \multiput(160,100)(60,0){1}{\line(1,1){20}}
  \multiput(160,120)(60,0){1}{\line(1,-1){20}}
}

  \put(-10,0){\makebox(0,0){$\mathbf{1}$}}
  \put(-10,20){\makebox(0,0){$\mathbf{2}$}}
  \put(-10,40){\makebox(0,0){$\mathbf{3}$}}
  \put(-10,60){\makebox(0,0){$\mathbf{4}$}}
  \put(-10,80){\makebox(0,0){$\mathbf{5}$}}
  \put(-10,100){\makebox(0,0){$\mathbf{6}$}}
  \put(-10,120){\makebox(0,0){$\mathbf{7}$}}

  \put(350,0){\makebox(0,0){$\mathbf{7}$}}
  \put(350,20){\makebox(0,0){$\mathbf{6}$}}
  \put(350,40){\makebox(0,0){$\mathbf{5}$}}
  \put(350,60){\makebox(0,0){$\mathbf{4}$}}
  \put(350,80){\makebox(0,0){$\mathbf{3}$}}
  \put(350,100){\makebox(0,0){$\mathbf{2}$}}
  \put(350,120){\makebox(0,0){$\mathbf{1}$}}

  \put(0,10){\makebox(0,0){$1$}}
  \put(50,10){\makebox(0,0){$2$}}
  \put(110,10){\makebox(0,0){$3$}}
  \put(170,10){\makebox(0,0){$4$}}
  \put(230,10){\makebox(0,0){$5$}}
  \put(290,10){\makebox(0,0){$6$}}
  \put(340,10){\makebox(0,0){$7$}}

  \put(20,30){\makebox(0,0){$12$}}
  \put(80,30){\makebox(0,0){$23$}}
  \put(140,30){\makebox(0,0){$34$}}
  \put(200,30){\makebox(0,0){$45$}}
  \put(260,30){\makebox(0,0){$56$}}
  \put(320,30){\makebox(0,0){$67$}}

  \put(50,50){\makebox(0,0){$123$}}
  \put(110,50){\makebox(0,0){$234$}}
  \put(170,50){\makebox(0,0){$345$}}
  \put(230,50){\makebox(0,0){$456$}}
  \put(290,50){\makebox(0,0){$567$}}

  \put(80,70){\makebox(0,0){$1234$}}
  \put(140,70){\makebox(0,0){$2345$}}
  \put(200,70){\makebox(0,0){$3456$}}
  \put(260,70){\makebox(0,0){$4567$}}

  \put(110,90){\makebox(0,0){$12345$}}
  \put(170,90){\makebox(0,0){$23456$}}
  \put(230,90){\makebox(0,0){$34567$}}

  \put(140,110){\makebox(0,0){$123456$}}
  \put(200,110){\makebox(0,0){$234567$}}

\end{picture}
\end{center}

\caption{A special wiring diagram for $k=7$, and its chamber minors.}
\label{fig:special-wiring}
\end{figure}


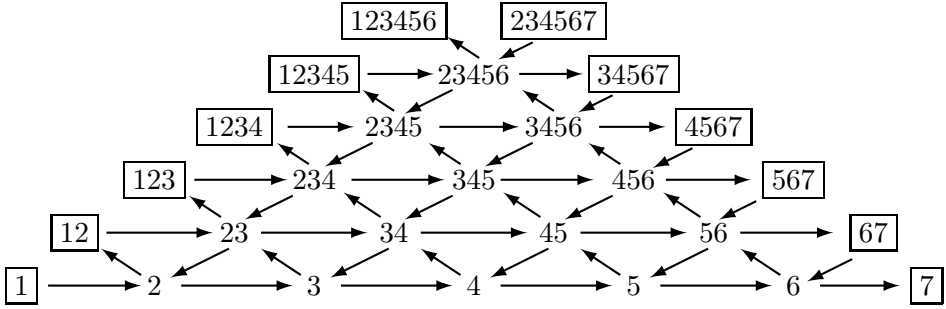
\begin{figure}[ht]
\setlength{\unitlength}{1pt}
\begin{center}
\begin{picture}(340,105)(0,10)
\thicklines
\dark{

\put(10,10){\vector(1,0){35}}
\put(60,10){\vector(1,0){42}}
\put(120,10){\vector(1,0){42}}
\put(180,10){\vector(1,0){42}}
\put(240,10){\vector(1,0){42}}
\put(300,10){\vector(1,0){30}}

\put(32,30){\vector(1,0){40}}
\put(90,30){\vector(1,0){40}}
\put(150,30){\vector(1,0){40}}
\put(210,30){\vector(1,0){40}}
\put(270,30){\vector(1,0){35}}

\put(65,50){\vector(1,0){35}}
\put(124,50){\vector(1,0){35}}
\put(180,50){\vector(1,0){35}}
\put(242,50){\vector(1,0){32}}

\put(100,70){\vector(1,0){25}}
\put(157,70){\vector(1,0){30}}
\put(217,70){\vector(1,0){25}}

\put(130,90){\vector(1,0){25}}
\put(187,90){\vector(1,0){24}}

\put(45,14){\vector(-3,2){15}}
\put(105,14){\vector(-3,2){15}}
\put(165,14){\vector(-3,2){15}}
\put(225,14){\vector(-3,2){15}}
\put(285,14){\vector(-3,2){15}}

\put(75,36){\vector(-3,2){12}}
\put(135,36){\vector(-3,2){14}}
\put(195,36){\vector(-3,2){14}}
\put(255,36){\vector(-3,2){14}}

\put(108,56){\vector(-3,2){12}}
\put(165,56){\vector(-3,2){12}}
\put(225,56){\vector(-3,2){12}}

\put(140,76){\vector(-3,2){12}}
\put(200,76){\vector(-3,2){12}}

\put(172,96){\vector(-3,2){12}}

\put(78,24){\vector(-2,-1){22}}
\put(138,24){\vector(-2,-1){22}}
\put(198,24){\vector(-2,-1){22}}
\put(258,24){\vector(-2,-1){22}}
\put(312,20){\vector(-2,-1){17}}

\put(102,44){\vector(-2,-1){18}}
\put(162,44){\vector(-2,-1){18}}
\put(222,44){\vector(-2,-1){18}}
\put(276,42){\vector(-2,-1){14}}

\put(132,64){\vector(-2,-1){18}}
\put(192,64){\vector(-2,-1){18}}
\put(252,62){\vector(-2,-1){15}}

\put(162,84){\vector(-2,-1){18}}
\put(222,82){\vector(-2,-1){13}}

\put(192,102){\vector(-2,-1){13}}

}

  \put(0,10){\makebox(0,0){$\boxed{1}$}}
  \put(50,10){\makebox(0,0){$2$}}
  \put(110,10){\makebox(0,0){$3$}}
  \put(170,10){\makebox(0,0){$4$}}
  \put(230,10){\makebox(0,0){$5$}}
  \put(290,10){\makebox(0,0){$6$}}
  \put(340,10){\makebox(0,0){$\boxed{7}$}}

  \put(20,30){\makebox(0,0){$\boxed{12}$}}
  \put(80,30){\makebox(0,0){$23$}}
  \put(140,30){\makebox(0,0){$34$}}
  \put(200,30){\makebox(0,0){$45$}}
  \put(260,30){\makebox(0,0){$56$}}
  \put(320,30){\makebox(0,0){$\boxed{67}$}}

  \put(50,50){\makebox(0,0){$\boxed{123}$}}
  \put(110,50){\makebox(0,0){$234$}}
  \put(170,50){\makebox(0,0){$345$}}
  \put(230,50){\makebox(0,0){$456$}}
  \put(290,50){\makebox(0,0){$\boxed{567}$}}

  \put(80,70){\makebox(0,0){$\boxed{1234}$}}
  \put(140,70){\makebox(0,0){$2345$}}
  \put(200,70){\makebox(0,0){$3456$}}
  \put(260,70){\makebox(0,0){$\boxed{4567}$}}

  \put(110,90){\makebox(0,0){$\boxed{12345}$}}
  \put(170,90){\makebox(0,0){$23456$}}
  \put(230,90){\makebox(0,0){$\boxed{34567}$}}

  \put(140,110){\makebox(0,0){$\boxed{123456}$}}
  \put(200,110){\makebox(0,0){$\boxed{234567}$}}

\end{picture}
\end{center}

\caption{The seed corresponding to the wiring diagram in Figure \ref{fig:special-wiring}.}
\label{fig:special-wiring-seed}
\end{figure}

All the elements of $\tilde\xx(D)$ are flag minors, so they belong
to~$\CC[G]^U$.
Moreover they are irreducible polynomials, hence irreducible elements
of~$\CC[G]^U$. 
This follows from the well-known fact that 
the determinant of a matrix of indeterminates
is an irreducible polynomial, see~\cite[Theorem~3.2]{denis-serre}. 

Let us compute the elements of the clusters adjacent
to~$\tilde\xx(D)$. 
Note that the chamber minors of $D$ are \emph{solid},
i.e., they have column sets of the form
\[
[a,d]=\{a,a+1,\dots,d-1,d\},
\]
for $1\le a\le d\le k$. 
Note that mutations at vertices in the bottom row of the quiver
can be understood using the braid moves we studied earlier, 
\emph{cf.\ } Figure~\ref{fig:moves1}.  To understand mutations 
at the other vertices of the quiver, note that 
a typical chamber minor $P_{[b,c]}\in\tilde \xx(D)$ 
is exchanged with the element $\Omega\in\operatorname{Frac}(\CC[G]^U)$ given by
\begin{equation}
\label{eq:Omega-Jbc}
\Omega=\dfrac{P_{Jabc}\,P_{Jcd}\,P_{Jb}+P_{Jab}\,P_{Jbcd}\,P_{Jc}}{P_{Jbc}},
\end{equation}
where we used the shorthand
\begin{align}
\nonumber
a&=b-1,\\
\nonumber
d&=c+1,\\
\nonumber
J&=[b+1,c-1],\\
\label{eq:Jbc}
Jbc&=J\cup\{b,c\}=[b,c], 
\end{align}
and similarly for $Jb$, $Jc$, etc. 
This can be seen by examining the quiver $Q(D)$ in the vicinity of the
vertex associated with $P_{[b,c]}=P_{Jbc}$,
see Figure~\ref{fig:special-wiring-Jabcd}.

\begin{figure}[ht]
\setlength{\unitlength}{1pt}
\begin{center}
\begin{picture}(340,120)(0,5)
\thicklines
\dark{

  \put(0,0){\line(1,0){10}}
  \multiput(30,0)(60,0){5}{\line(1,0){40}}
  \put(330,0){\line(1,0){10}}

  \put(0,20){\line(1,0){10}}
  \multiput(30,20)(30,0){10}{\line(1,0){10}}
  \put(330,20){\line(1,0){10}}

  \put(0,40){\line(1,0){40}}
  \multiput(60,40)(30,0){8}{\line(1,0){10}}
  \put(300,40){\line(1,0){40}}

  \put(0,60){\line(1,0){70}}
  \multiput(90,60)(30,0){6}{\line(1,0){10}}
  \put(270,60){\line(1,0){70}}

  \put(0,80){\line(1,0){100}}
  \multiput(120,80)(30,0){4}{\line(1,0){10}}
  \put(240,80){\line(1,0){100}}

  \put(0,100){\line(1,0){130}}
  \multiput(150,100)(30,0){2}{\line(1,0){10}}
  \put(210,100){\line(1,0){130}}

  \put(0,120){\line(1,0){160}}
  \put(180,120){\line(1,0){160}}

  \multiput(10,0)(60,0){6}{\line(1,1){20}}
  \multiput(10,20)(60,0){6}{\line(1,-1){20}}
  \multiput(40,20)(60,0){5}{\line(1,1){20}}
  \multiput(40,40)(60,0){5}{\line(1,-1){20}}
  \multiput(70,40)(60,0){4}{\line(1,1){20}}
  \multiput(70,60)(60,0){4}{\line(1,-1){20}}
  \multiput(100,60)(60,0){3}{\line(1,1){20}}
  \multiput(100,80)(60,0){3}{\line(1,-1){20}}
  \multiput(130,80)(60,0){2}{\line(1,1){20}}
  \multiput(130,100)(60,0){2}{\line(1,-1){20}}
  \multiput(160,100)(60,0){1}{\line(1,1){20}}
  \multiput(160,120)(60,0){1}{\line(1,-1){20}}
}

  \put(-10,20){\makebox(0,0){$a$}}
  \put(-10,40){\makebox(0,0){$b$}}
  \put(-10,100){\makebox(0,0){$d$}}
  \put(-10,80){\makebox(0,0){$c$}}

  \put(350,40){\makebox(0,0){$c$}}
  \put(350,20){\makebox(0,0){$d$}}
  \put(350,80){\makebox(0,0){$b$}}
  \put(350,100){\makebox(0,0){$a$}}

  \put(140,30){\makebox(0,0){$Jb$}}
  \put(200,30){\makebox(0,0){$Jc$}}

  \put(110,50){\makebox(0,0){$Jab$}}
  \put(170,50){\makebox(0,0){$Jbc$}}
  \put(230,50){\makebox(0,0){$Jcd$}}

  \put(140,70){\makebox(0,0){$Jabc$}}
  \put(200,70){\makebox(0,0){$Jbcd$}}
\end{picture}
\setlength{\unitlength}{2.5pt} 
\begin{picture}(60,30)(0,10) 
\thinlines 

\put(20,10){\makebox(0,0){$Jb$}}
\put(40,10){\makebox(0,0){$Jc$}}

\put(10,20){\makebox(0,0){$Jab$}}
\put(30,20){\makebox(0,0){$Jbc$}}
\put(50,20){\makebox(0,0){$Jcd$}}

\put(20,30){\makebox(0,0){$Jabc$}}
\put(42,30){\makebox(0,0){$Jbcd$}}

\thicklines 


\put(14,20){\vector(1,0){12}}
\put(34,20){\vector(1,0){12}}

\put(28,18){\vector(-1,-1){6}}
\put(38,28){\vector(-1,-1){6}}

\put(37,12){\vector(-1,1){5}}
\put(27,22){\vector(-1,1){5}}

\end{picture}
\end{center}

\caption{Chamber minors appearing in the exchange relation for a flag
  minor $P_{Jbc}$ in a special wiring diagram,
and part of the associated quiver.
Only the arrows incident to the vertex $Jbc$ are shown.}
\label{fig:special-wiring-Jabcd}
\end{figure}
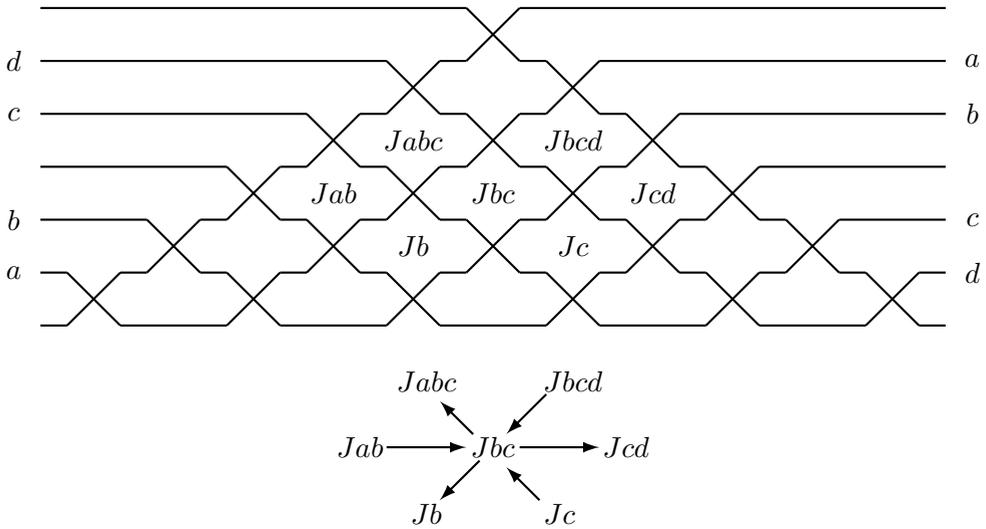

We verify that $\Omega\in\CC[G]^U$
by expressing $\Omega$ as a polynomial in flag minors: 

\begin{lemma}
The rational function $\Omega$ defined by~\eqref{eq:Omega-Jbc} satisfies
\begin{equation}
\label{eq:Omega-Jbc2}
\Omega 
= -P_{Ja} P_{Jbcd} + P_{Jb} P_{Jacd}.
\end{equation}
\end{lemma}

\begin{proof}
Follows from \eqref{eq:Omega}  by virtue of
Muir's Law 
(Proposition~\ref{prop:Muir}).
\end{proof}

It remains to prove that $\Omega$ is coprime to~$P_{Jbc}$. 
Let $f$ denote the specialization that sets the top \hbox{$|J|+2$} ent\-ries in column~$a$ to zero. 
Note that $f$ leaves $P_{Jbc}$ unchanged, and $f(P_{Ja})=0$. 
We know that $P_{Jbc}$ is irreducible. 
If $\Omega$ were divisible by~$P_{Jbc}$, then
$f(\Omega)=\pm P_{Jb}\cdot z_{|J|+3,a}\cdot P_{Jcd}$  
would also be divisible by~$P_{Jbc}$.
But $f(\Omega)$ is a product of three
irreducible polynomials none of which is a scalar multiple of~$P_{Jbc}$. 
\end{proof}

\pagebreak[3]

The special cases $G=\SL_3$ and $G=\SL_4$ of this construction
have been presented in 
Examples~\ref{ex:base-affine-SL3} and~\ref{example:SL4},
respectively.   We now discuss the case $G = \SL_5$.

\begin{example}\label{ex:SL5}
Let $G=\SL_5$.  
In this case, the cluster structure~in~$\CC[G]^U$
is of type~$D_6$, and accordingly has $36$ cluster variables, cf.\ Figure~\ref{tab:cluster-numbers}.  
There are $8$ coefficient variables, all of them flag minors. 

To compute all cluster variables, one can use the following method,
cf.\ \cite[Proposition~11.1(1)]{ca4}. 
Start with a seed coming from a wiring diagram. 
Apply mutations to obtain a seed whose quiver is a bipartite
orientation of the Dynkin diagram type~$D_6$. 
(That is, each vertex is either a source or a sink.) 
Then repeatedly alternate between mutating at all sources 
and mutating at all sinks until all $36$ cluster variables are computed.
After each mutation, one needs to represent the new cluster variable
as a regular function, not just a rational one.
At the end of this process, one determines that each of the $22=2^5-2-8$ 
flag minors which is not a coefficient variable is a cluster variable. 
The remaining 14 cluster variables are described as follows. 
Define
\begin{align*}
g(a,b|c,d) &= -P_a P_{bcd}+P_b P_{acd},\\
g(a,b|c,d|J) &= -P_{Ja} P_{Jbcd} + P_{Jb} P_{Jacd}, \\
h(a,b,c|d,e) &= -P_{ab} P_{cde} + P_{ac} P_{bde},\\
j(a,b|c,d,e) &= -P_a P_{bcde} + P_b P_{acde},
\end{align*}
where we use the shorthand $bcd = \{b,c,d\}$, $Ja = J \cup \{a\}$, etc.
Note that $g(a,b|c,d|J)$ is precisely the regular function $\Omega$
 from \eqref{eq:Omega-Jbc} and~\eqref{eq:Omega-Jbc2}. 
With this notation, the $14$ cluster variables in question turn out to be:
\begin{align}
\label{eq:g(1,2|3,4)...}
&g(1,2|3,4), g(2,3|4,5), g(4,5|1,2), g(1,3|4,5), g(1,2|3,5), \\
&g(1,2|3,4|5), g(2,3|4,5|1), g(4,5|1,2|3), g(1,3|4,5|2), g(1,2|3,5|4), \\
&h(1,2,3|4,5), h(5,4,3|2,1), \\
\label{eq:j(1,2|3,4,5)...}
&j(1,2|3,4,5), j(5,4|3,2,1).
\end{align}
\end{example}

For $k\ge 6$, the cluster structure on $\CC[\SL_k]^U$ 
that we described above is of infinite type. 
See Table \ref{tab:clustertype}.

\begin{table}[ht]
\begin{center}
\begin{tabular}{| l | l |}
\hline
Ring & Cluster type \\
\hline 
\hline
& \\[-.4cm]
$\CC[\SL_3]^U$ & $A_1$ \\[.1cm] \hline
& \\[-.4cm]
$\CC[\SL_4]^U$ & $A_3$ \\[.1cm] \hline
& \\[-.4cm]
$\CC[\SL_5]^U$ & $D_6$ \\[.1cm] \hline
& \\[-.4cm]
$\CC[\SL_k]^U$ for $k \geq 6$ & infinite type \\ \hline
\end{tabular}
\end{center}
\caption{The type of the standard cluster structure on $\CC[\SL_k]^U$.}
\label{tab:clustertype}
\end{table}

\begin{remark}
Let $U^+\subset G = \SL_k$ be the subgroup of unipotent \emph{upper-triangular} matrices.  
(Recall that we denoted by $U$ the subgroup of unipotent \emph{lower-triangular}
matrices.)  
Let $\varphi:\CC[G]^U \to \CC[U^+]$ be the ring map defined by 
restricting $U$-invariant functions on~$G$ to the subgroup~$U^+$.
Since every matrix entry of an element of $U^+$
can be written as a flag minor, $\varphi$~is onto.
The map $\varphi$ can be used to transform a cluster structure on~$\CC[G]^U$
into a cluster structure on~$\CC[U^+]$. 
(This boils down to removing the coefficient variables corresponding to the leading 
principal minors~$P_{1,\cdots,j}$, as $P_{1,\cdots,j}(u)=1$ for $u\in U^+$.)  

More generally, the coordinate ring of 
a maximal unipotent subgroup of any Kac-Moody group
is a cluster algebra \cite{GLS-kacmoody}.  
\end{remark}

In this section we have so far discussed the basic affine space
in the case of $G = \SL_k$.  We now 
give a quick review of basic affine spaces and their significance
for general semi\-simple Lie groups $G$. 
An excellent introduction is given in \cite[Section~2.1]{brion}. 
Additional material can be found
in~\cite{bezrukavnikov-braverman-positselskii, grosshans}. 
For general background on linear algebraic groups, see, e.g., 
\cite[Section~3.3]{dolgachev}~or~\cite{murnaghan}. 

\pagebreak[3]

Let $G$ be a simply connected semisimple complex algebraic group. 
Let~$U$ be a maximal unipotent subgroup of~$G$.
The variety $X=G/U$ is smooth and quasi-affine
(i.e., open in an affine variety). 
To be more specific, let~$\mathcal{O}(X)$ denote the ring of regular
functions on~$X$. 
Then $X$ embeds as an open subvariety into the affine (irreducible) 
variety $\overline{X}=\operatorname{Spec}(\mathcal{O}(X))$,
the ``affine completion'' of~$X$. 
The rings of regular functions on~$X$ and on~$\overline{X}$ 
coincide: $\mathcal{O}(X)=\mathcal{O}(\overline{X})$. 
Moreover these rings are naturally identified with the ring of
invariants~$\CC[G]^U$.
The variety $\overline{X}$ is called the \emph{basic affine space}
for~$G$.
It is normal and usually singular. 

In this section we proved that $\CC[\SL_k]^U$ is a cluster algebra.
More generally, for a simply-laced $G$, with $U$ as above, 
there is a cluster algebra contained inside 
the coordinate ring $\CC[G]^U$ \cite{gls-partialflag}; it conjecturally 
coincides with the coordinate ring after a certain localization, see \cite[Conjecture~10.4]{gls-partialflag}.
In~this context, some \emph{generalized minors}
\cite[Definition~1.4]{fz-dbc} (see also 
\cite[Definition~6.2]{MR}) play the role of flag minors,
and are used to define a collection of special seeds for the corresponding cluster algebra.

The significance of the basic affine space stems from the well known
fact that its coordinate 
ring $\CC[G]^U$ is a direct sum of all irreducible rational
representations of~$G$, each occurring with multiplicity~$1$.
A more detailed description is as follows. 
The subgroup $U$ is the unipotent radical of a Borel 
subgroup~$B\!\subset \!G$. 
The action of the Cartan subgroup $H\!=\!B/U$~on~$X$
commutes with the natural $G$-action. 
The $H$-action induces a grading of~$\CC[G]^U$ 
by the weight lattice of~$G$. 
The graded components are labeled by the~dominant weights~$\lambda$, 
and carry irreducible representations of~$G$ (of highest
weight~$\lambda$).

When $G=\SL_k$ is the special linear group, 
the irreducible representation with the highest weight
$d_1 \omega_1 + d_2 \omega_2+\cdots$ 
(here $d_1, d_2, \ldots$ are nonnegative integers, 
and $\omega_1, \omega_2, \dots$ are the fundamental
weights of $G$ in the standard order) 
appears as the space of polynomials in the flag minors
which are homogeneous of degree $d_1$ in the flag minors 
$P_1, P_2, P_3, \dots$, of degree $d_2$ in $P_{12}, P_{13}, P_{23},\dots$,
and so~on.
To~rephrase, these polynomials have degree $d_1+d_2+d_3+\cdots$ with
respect to the first row entries of a $k\times k$ matrix;
degree $d_2+d_3+\cdots$ with
respect to the second row entries; and so on. 

A monomial in the flag minors is called a \emph{cluster
monomial} if all these minors belong to the same extended cluster. 
In the finite type cases where $G$ is $\SL_3$, $\SL_4$, or~$\SL_5$,
the cluster monomials form a $\CC$-basis of $\CC[G]^U$. 
This is an instance of (the classical limit of) 
the \emph{dual canonical basis} of G.~Lusztig~\cite{Lusztig-canonical}, 
also known as the  \emph{upper global basis} of M.~Kashiwara~\cite{Kashiwara}. 
(In the case $G=\SL_3$, this basis was introduced and studied in detail in~\cite{gz}.) 
This description of the basis served as the key original motivation 
for the introduction of cluster algebras in~\cite{ca1}.

In infinite type, the picture turns out to be much more complicated. 
The fundamental result obtained in~\cite{kkko} asserts that in general,
the dual canonical (or upper global) basis contains all cluster monomials. 
The rest of the basis still awaits an explicit description. 

\newpage

\section{Cluster structure in the rings
$\CC[\operatorname{Mat}_{k \times k}]$ and  $\CC[\SL_k]$}
\label{sec:rings-matrices}

The coordinate ring $\CC[\operatorname{Mat}_{k \times k}]$ of  
the space of $k \times k$ complex matrices is the polynomial ring~$\CC[z_{ij}]$;
here $z = (z_{ij})$ denotes a generic $k \times k$ matrix.
We will use double wiring diagrams to describe a cluster structure in this ring.
An adaptation of this construction will then produce a cluster structure in
the coordinate ring $\CC[\SL_k]$ of the special linear group.

\begin{theorem}\label{th:Matkk}
The ring 
$\CC[\operatorname{Mat}_{k \times k}]$
has a cluster structure whose set of coefficient and cluster variables
includes all minors of a $k\times k$ matrix. 
\end{theorem}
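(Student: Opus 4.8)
The plan is to exhibit an explicit seed whose cluster and frozen variables are all minors, and then invoke the machinery built up earlier in this section. The natural seed to use is the one coming from a \emph{double wiring diagram} (cf.\ Section~\ref{sec:mut-double-wiring}): its chamber minors are indexed by the chambers of the diagram, and for a suitable ``standard'' double wiring diagram these chamber minors consist of the $\binom{k+1}{2}$ initial ``northwest'' solid minors together with a complementary family, giving an extended cluster $\tilde\xx(D)$ of size $k^2=\dim(\operatorname{Mat}_{k\times k})$; the exchange matrix $\tilde B(D)$ is the signed adjacency matrix of the associated quiver $Q(D)$. First I would check that $(\tilde\xx(D),\tilde B(D))$ really is a seed in $\FFcal=\CC(z_{ij})$, i.e.\ that its chamber minors are algebraically independent: this follows because the number of chamber minors equals $k^2=\dim\CC[\operatorname{Mat}_{k\times k}]$ and, by the results on double wiring diagrams, every matrix minor (in particular every $z_{ij}$) can be written as a subtraction-free rational expression in these chamber minors, so they generate the fraction field and hence, by the dimension count, form a transcendence basis.

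Next I would prove the inclusion $\AA(\tilde\xx(D),\tilde B(D))\subset\CC[\operatorname{Mat}_{k\times k}]$ using Corollary~\ref{cor:cluster-criterion} (the factorial version of the Starfish Lemma): the polynomial ring $\CC[z_{ij}]$ is a finitely generated factorial $\CC$-algebra, so it suffices to verify that every cluster variable in $\tilde\xx(D)$, and every cluster variable in each of the clusters adjacent to $\tilde\xx(D)$, is an irreducible element of $\CC[z_{ij}]$. For the variables in $\tilde\xx(D)$ itself this is the classical fact that a minor of a generic matrix is an irreducible polynomial (as cited, \cite[Theorem~3.2]{denis-serre}). For the adjacent clusters one must identify the element $x_k'$ produced by each mutation $\mu_k$: mutations at vertices of $Q(D)$ are governed by the three-term (short Plücker / Desnanot--Jacobi) determinantal identities that define the exchange relations of the double wiring diagram, and solving each such relation for $x_k'$ expresses it, via Muir's Law (Proposition~\ref{prop:Muir}) exactly as in the proof of the Lemma preceding Example~\ref{ex:SL5}, as a single minor of $z$ (possibly after a sign), which is again irreducible. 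This is the step I expect to be the main obstacle: one must check \emph{uniformly}, for every chamber of the standard double wiring diagram, that the mutated variable is a genuine minor and not merely a regular function, which requires a careful local analysis of $Q(D)$ around each vertex (the braid-move reasoning, cf.\ Figure~\ref{fig:moves1}, handles the bottom row, but interior chambers need the Muir's Law computation).

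Finally I would establish the reverse inclusion $\AA(\tilde\xx(D),\tilde B(D))\supset\CC[\operatorname{Mat}_{k\times k}]$: since $\CC[z_{ij}]$ is generated by the matrix entries $z_{ij}$, and each $z_{ij}$ is itself a $1\times1$ minor which occurs as a chamber minor of \emph{some} double wiring diagram $D'$, hence (using statement~\eqref{item:SLk/U-2}-style mutation-equivalence of all double-wiring-diagram seeds, cf.\ Exercise~\ref{exercise:quiverexchange}) as a cluster or frozen variable of the seed pattern generated by $(\tilde\xx(D),\tilde B(D))$, we get that this seed pattern contains a generating set for $\CC[z_{ij}]$. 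Combining the two inclusions yields $\AA(\tilde\xx(D),\tilde B(D))=\CC[\operatorname{Mat}_{k\times k}]$, and by construction the coefficient and cluster variables of this cluster structure include all minors of the $k\times k$ matrix, which is exactly the assertion of the theorem. The adaptation to $\CC[\SL_k]$, alluded to at the end of the section, is then obtained by specializing the determinant $P_{[1,k],[1,k]}$ to $1$, i.e.\ passing to the quotient by $\langle \det z - 1\rangle$ and dropping that frozen variable, much as $\AA$ was identified with $R_{2,n+3}/\langle P_{1,n+3}-1\rangle$ in Example~\ref{ex:2x(n+1)}.
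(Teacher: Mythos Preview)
Your overall architecture is correct and matches the paper's: build a seed from a double wiring diagram, get the inclusion $\CC[\operatorname{Mat}_{k\times k}]\subset\AA$ from the fact that each matrix entry is a chamber minor of some double wiring diagram (all such seeds being mutation equivalent), and get the reverse inclusion from the Starfish lemma. The paper does exactly this.

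There is, however, a genuine gap in your Starfish step. You plan to invoke Corollary~\ref{cor:cluster-criterion} by showing that every cluster variable $x_k'$ adjacent to the standard seed is a single minor, hence irreducible. This is false for the seed you describe. In the ``solid minors'' double wiring diagram (the one with northwest-justified solid minors), many mutable vertices of $Q(D)$ have degree~$6$; the resulting exchange relation is \emph{not} a three-term Desnanot--Jacobi identity, and the new variable $x_k'$ is not a minor. The Lemma you cite (the one preceding Example~\ref{ex:SL5}) does not produce a single minor either: it expresses $\Omega$ as $-P_{Ja}P_{Jbcd}+P_{Jb}P_{Jacd}$, a genuine quadratic polynomial in flag minors. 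So Muir's Law does not rescue irreducibility here, and Corollary~\ref{cor:cluster-criterion} cannot be applied as stated.

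The paper handles this by offering two separate fixes, and you have essentially merged them into one broken argument. The first fix keeps the solid-minors seed but switches from Corollary~\ref{cor:cluster-criterion} to Proposition~\ref{prop:cluster-criterion}: one shows not that $x_k'$ is irreducible but that it is \emph{coprime} to $x_k$, by adapting the specialization argument from Section~\ref{sec:rings-baseaffine} (set certain matrix entries to zero and observe that $x_k'$ factors into irreducibles none of which is a scalar multiple of~$x_k$). The second fix keeps Corollary~\ref{cor:cluster-criterion} but replaces the seed: one uses instead a specific ``grid'' double wiring diagram in which every mutable vertex has degree~$3$ or~$4$, so that every adjacent exchange relation genuinely \emph{is} a three-term Grassmann--Pl\"ucker relation and every $x_k'$ genuinely \emph{is} a minor. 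Either route works, but you must commit to one of them; the combination you propose (irreducibility criterion plus solid-minors seed) does not.
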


\begin{proof}
Recall from Section \ref{sec:mut-double-wiring} and 
Exercise \ref{exercise:quiverexchange} that one can 
associate a seed $(\tilde \xx, \tilde B)$ to every double wiring diagram.  
All cluster and frozen variables in such a seed are minors of~$z$.
In particular, the set of frozen variables consists of all minors of the form 
$\Delta_{I,J}$ or $\Delta_{J,I}$ where 
$I = \{1,2,\dots, i\}$ and $J = \{k-i+1,k-i+2,\dots,k\}$, 
with $i\in\{1,\dots, k\}$.

Recall that any two double wiring diagrams can be 
connected by local moves, cf.\ Figure~\ref{fig:moves}. 
Since these moves correspond to mutations of the corresponding seeds,
all such seeds define the same cluster algebra. 

Let $(\tilde \xx, \tilde B)$ be a seed associated to a double wiring diagram.
To show that $\CC[\operatorname{Mat}_{k \times k}]
 \subset \AA(\tilde \xx, \tilde B)$, it suffices to show that each matrix entry~$z_{ij}$ lies in 
$\AA(\tilde \xx, \tilde B)$.  
The latter statement follows from the fact that one can always construct a double
wiring diagram whose extended cluster~$\tilde\xx$ contains~$z_{ij}$. 

The inclusion $\AA(\tilde \xx, \tilde B) \subset \CC[\operatorname{Mat}_{k \times k}]$ can be shown using either of the two arguments outlined below.
These two arguments use two different seeds as well as two 
	different versions of the Starfish lemma.
The first argument relies on
\cref{prop:cluster-criterion}, which requires checking a coprimality condition on cluster variables; 
	 the second argument uses
\cref{cor:cluster-criterion}, which requires checking an irreducibility condition.

The polynomial ring $\CC[z_{ij}]$ is 
factorial and therefore normal.  In order to use 
	\cref{prop:cluster-criterion}, we first observe that 
$\tilde \xx \subset \CC[\operatorname{Mat}_{k \times k}]$.
Moreover, any two cluster variables 
in $\tilde \xx$ are pairwise coprime, since the determinant is an 
irreducible polynomial.  It remains to check 
	property~(3) of \cref{prop:cluster-criterion}.
Let us choose the seed $(\tilde \xx, \tilde B)$ shown in Figure~\ref{fig:solid-minors}.
(The figure shows the example for $k=4$ but the generalization
to an arbitrary~$k$ is clear from the picture.)
Note that the minors appearing in this seed are very simple:
they are solid minors that ``stick" to the left edge or 
the upper edge of the matrix.
We now need to check that for each minor~$x_{\ell}$ in the seed,
the new cluster variable~$x'_{\ell}$ obtained by an exchange with~$x_\ell$ 
is a polynomial in the matrix entries that is moreover coprime to~$x_{\ell}$.
Although some cluster variables $x'_{\ell}$ will no longer
be minors (in particular, those resulting from mutations 
at degree~$6$ vertices in the quiver), 
one can adapt the argument from Section~\ref{sec:rings-baseaffine} to prove that they are nevertheless
polynomials coprime~to~$x_{\ell}$.

\pagebreak[3]

\begin{figure}[ht]
\begin{center}
\setlength{\unitlength}{1.22pt}
\begin{picture}(240,80)(6,0)
\thicklines
\dark{
\linethickness{1.8pt}
  \put(-10,0){\line(1,0){140}}
  \put(150,0){\line(1,0){30}}
  \put(200,0){\line(1,0){30}}
  \put(250,0){\line(1,0){10}}
  
  \put(-10,20){\line(1,0){140}}
  \put(150,20){\line(1,0){5}}
  \put(175,20){\line(1,0){5}}
  \put(200,20){\line(1,0){5}}
  \put(225,20){\line(1,0){5}}
  \put(250,20){\line(1,0){10}}

  \put(-10,40){\line(1,0){165}}
  \put(175,40){\line(1,0){5}}
  \put(200,40){\line(1,0){5}}
  \put(225,40){\line(1,0){35}}

  \put(-10,60){\line(1,0){190}}
  \put(200,60){\line(1,0){60}}
  
  \put(130,0){\line(1,1){20}}
  \put(180,0){\line(1,1){20}}
  \put(230,0){\line(1,1){20}}
  \put(130,20){\line(1,-1){20}}
  \put(180,20){\line(1,-1){20}}
  \put(230,20){\line(1,-1){20}}

  \put(155,20){\line(1,1){20}}
  \put(155,40){\line(1,-1){20}}
  \put(205,20){\line(1,1){20}}
  \put(205,40){\line(1,-1){20}}

  \put(180,40){\line(1,1){20}}
  \put(180,60){\line(1,-1){20}}


  \put(265,-6){$\mathbf{4}$}
  \put(265,14){$\mathbf{3}$}
  \put(265,34){$\mathbf{2}$}
  \put(265,54){$\mathbf{1}$}

  \put(-18,-6){$\mathbf{1}$}
  \put(-18,14){$\mathbf{2}$}
  \put(-18,34){$\mathbf{3}$}
  \put(-18,54){$\mathbf{4}$}
}

\light{
\thinlines

  \put(-10,2){\line(1,0){10}}
  \put(20,2){\line(1,0){30}}
  \put(70,2){\line(1,0){30}}
  \put(120,2){\line(1,0){140}}
  \put(-10,22){\line(1,0){10}}
  \put(20,22){\line(1,0){5}}
  \put(45,22){\line(1,0){5}}
  \put(70,22){\line(1,0){5}}
  \put(95,22){\line(1,0){5}}
  \put(120,22){\line(1,0){140}}
  \put(-10,42){\line(1,0){35}}
  \put(45,42){\line(1,0){5}}
  \put(70,42){\line(1,0){5}}
  \put(95,42){\line(1,0){165}}
   \put(-10,62){\line(1,0){60}}
   \put(70,62){\line(1,0){190}}

  \put(0,2){\line(1,1){20}}
  \put(50,2){\line(1,1){20}}
  \put(100,2){\line(1,1){20}}

  \put(25,22){\line(1,1){20}}
  \put(75,22){\line(1,1){20}}

  \put(50,42){\line(1,1){20}}


  \put(0,22){\line(1,-1){20}}
  \put(50,22){\line(1,-1){20}}
  \put(100,22){\line(1,-1){20}}

  \put(25,42){\line(1,-1){20}}
  \put(75,42){\line(1,-1){20}}

  \put(50,62){\line(1,-1){20}}


  \put(265,2){${1}$}
  \put(265,22){${2}$}
  \put(265,42){${3}$}
  \put(265,62){${4}$}

  \put(-18,2){${4}$}
  \put(-18,22){${3}$}
  \put(-18,42){${2}$}
  \put(-18,62){${1}$}


  \put(-4,10){$_{\light{4},\mathbf{\dark{1}}}$}
  \put(30,10){$_{\light{3},\mathbf{\dark{1}}}$}
  \put(80,10){$_{\light{2},\mathbf{\dark{1}}}$}
  \put(120,10){$_{\light{1},\mathbf{\dark{1}}}$}
  \put(160,10){$_{\light{1},\mathbf{\dark{2}}}$}
  \put(210,10){$_{\light{1},\mathbf{\dark{3}}}$}
  \put(250,10){$_{\light{1},\mathbf{\dark{4}}}$}

  \put(5,30){$_{\light{34},\mathbf{\dark{12}}}$}
  \put(55,30){$_{\light{23},\mathbf{\dark{12}}}$}
  \put(115,30){$_{\light{12},\mathbf{\dark{12}}}$}
  \put(180,30){$_{\light{12},\mathbf{\dark{23}}}$}
  \put(230,30){$_{\light{12},\mathbf{\dark{34}}}$}

  \put(25,50){$_{\light{234},\mathbf{\dark{123}}}$}
  \put(115,50){$_{\light{123},\mathbf{\dark{123}}}$}
  \put(205,50){$_{\light{123},\mathbf{\dark{234}}}$}

  \put(115,70){$_{\light{1234},\mathbf{\dark{1234}}}$}

}
\end{picture}
\end{center}
\caption{A double wiring diagram~$D$ whose extended cluster consists of solid minors.
In the corresponding quiver, 
many mutable vertices 
will have degree~$6$. 
Mutation at such a vertex will result in a cluster variable which is not a minor.}
\label{fig:solid-minors}
\end{figure}
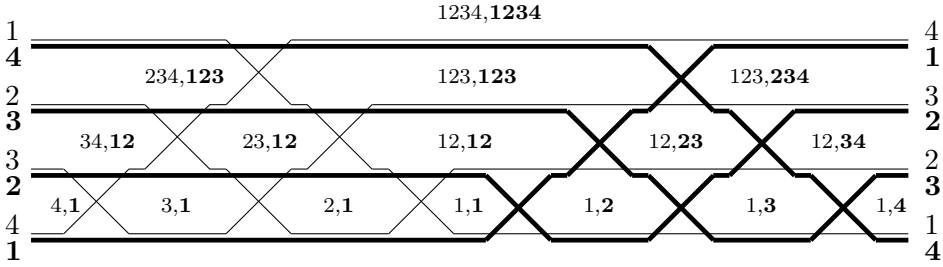

\medskip

An alternative approach relies on \cref{cor:cluster-criterion}
to establish the inclusion $\AA(\tilde \xx, \tilde B) \subset \CC[\operatorname{Mat}_{k \times k}]$.
Here we use a 
``grid seed" for $\CC[\operatorname{Mat}_{k \times k}]$, see 
Figures~\ref{fig:better} and~\ref{fig:gridquiver}.
%
This seed has the property that every mutable vertex in its quiver
has degree three or four, and the corresponding exchange relation is 
a three-term Grassmann-Pl\"ucker relation.  It follows that for each 
cluster variable $x_{\ell}$ the adjacent cluster variable $x'_{\ell}$
is a minor, and hence
an irreducible polynomial.  
The claim $\AA(\tilde \xx, \tilde B) \subset \CC[\operatorname{Mat}_{k \times k}]$ follows.
\end{proof}

\begin{figure}[ht]
\begin{center}
\setlength{\unitlength}{1.22pt}
\begin{picture}(240,80)(6,0)
\thicklines
\dark{
\linethickness{1.8pt}
  \put(-10,0){\line(1,0){95}}
  \put(105,0){\line(1,0){100}}
  \put(225,0){\line(1,0){35}}
  
  \put(-10,20){\line(1,0){35}}
  \put(45,20){\line(1,0){40}}
  \put(105,20){\line(1,0){40}}
  \put(165,20){\line(1,0){40}}
  \put(225,20){\line(1,0){35}}

  \put(-10,40){\line(1,0){35}}
  \put(45,40){\line(1,0){40}}
  \put(105,40){\line(1,0){40}}
  \put(165,40){\line(1,0){40}}
  \put(225,40){\line(1,0){35}}

  \put(-10,60){\line(1,0){95}}
  \put(105,60){\line(1,0){100}}
  \put(225,60){\line(1,0){35}}
  
  \put(85,0){\line(1,1){20}}
  \put(205,0){\line(1,1){20}}
  \put(85,20){\line(1,-1){20}}
  \put(205,20){\line(1,-1){20}}

  \put(25,20){\line(1,1){20}}
  \put(25,40){\line(1,-1){20}}
    \put(145,20){\line(1,1){20}}
  \put(145,40){\line(1,-1){20}}

  \put(85,40){\line(1,1){20}}
  \put(85,60){\line(1,-1){20}}
  \put(205,40){\line(1,1){20}}
  \put(205,60){\line(1,-1){20}}


  \put(265,-6){$\mathbf{4}$}
  \put(265,14){$\mathbf{3}$}
  \put(265,34){$\mathbf{2}$}
  \put(265,54){$\mathbf{1}$}

  \put(-18,-6){$\mathbf{1}$}
  \put(-18,14){$\mathbf{2}$}
  \put(-18,34){$\mathbf{3}$}
  \put(-18,54){$\mathbf{4}$}
}

\light{
\thinlines

  \put(-10,2){\line(1,0){35}}
  \put(45,2){\line(1,0){100}}
  \put(165,2){\line(1,0){95}}

  \put(-10,22){\line(1,0){35}}
  \put(45,22){\line(1,0){40}}
  \put(105,22){\line(1,0){40}}
  \put(165,22){\line(1,0){40}}
  \put(225,22){\line(1,0){35}}
    \put(-10,42){\line(1,0){35}}
  \put(45,42){\line(1,0){40}}
  \put(105,42){\line(1,0){40}}
  \put(165,42){\line(1,0){40}}
  \put(225,42){\line(1,0){35}}
  
  \put(-10,62){\line(1,0){35}}
  \put(45,62){\line(1,0){100}}
  \put(165,62){\line(1,0){95}}

  \put(25,2){\line(1,1){20}}
  \put(145,2){\line(1,1){20}}

  \put(85,22){\line(1,1){20}}
  \put(205,22){\line(1,1){20}}

  \put(25,42){\line(1,1){20}}
  \put(145,42){\line(1,1){20}}

  \put(25,22){\line(1,-1){20}}
  \put(145,22){\line(1,-1){20}}

  \put(85,42){\line(1,-1){20}}
  \put(205,42){\line(1,-1){20}}

  \put(25,62){\line(1,-1){20}}
  \put(145,62){\line(1,-1){20}}

  \put(265,2){${1}$}
  \put(265,22){${2}$}
  \put(265,42){${3}$}
  \put(265,62){${4}$}

  \put(-18,2){${4}$}
  \put(-18,22){${3}$}
  \put(-18,42){${2}$}
  \put(-18,62){${1}$}

  \put(-5,10){$_{\light{4},\mathbf{\dark{1}}}$}
  \put(50,10){$_{\light{3},\mathbf{\dark{1}}}$}
  \put(115,10){$_{\light{3},\mathbf{\dark{3}}}$}
  \put(175,10){$_{\light{1},\mathbf{\dark{3}}}$}
  \put(230,10){$_{\light{1},\mathbf{\dark{4}}}$}
  
  \put(-5,30){$_{\light{34},\mathbf{\dark{12}}}$}
  \put(50,30){$_{\light{34},\mathbf{\dark{13}}}$}
  \put(115,30){$_{\light{13},\mathbf{\dark{13}}}$}
  \put(175,30){$_{\light{13},\mathbf{\dark{34}}}$}
  \put(230,30){$_{\light{12},\mathbf{\dark{34}}}$}

  \put(-5,50){$_{\light{234},\mathbf{\dark{123}}}$}
  \put(50,50){$_{\light{134},\mathbf{\dark{123}}}$}
  \put(115,50){$_{\light{134},\mathbf{\dark{134}}}$}
  \put(175,50){$_{\light{123},\mathbf{\dark{134}}}$}
  \put(230,50){$_{\light{123},\mathbf{\dark{234}}}$}

  \put(115,70){$_{\light{1234},\mathbf{\dark{1234}}}$}

}
\end{picture}
\end{center}
\caption{A double wiring diagram for $\CC[\operatorname{Mat}_{4 \times 4}]$
whose associated quiver is the ``grid quiver" shown in \cref{fig:gridquiver}.} 
\label{fig:better}
\end{figure}


\begin{figure}[ht]
\begin{center}
\setlength{\unitlength}{2pt} 
\begin{picture}(160,60)(0,0) 

\put(80,60){\makebox(0,0){$\boxed{1234,\mathbf{1234}}$}} 

\put(-1,40){\makebox(0,0){$\boxed{234,\mathbf{123}}$}} 
\put(40,40){\makebox(0,0){${134,\mathbf{123}}$}} 
\put(80,40){\makebox(0,0){${134,\mathbf{134}}$}} 
\put(120,40){\makebox(0,0){${123,\mathbf{134}}$}} 
\put(160,40){\makebox(0,0){$\boxed{123,\mathbf{234}}$}} 

\put(-1,20){\makebox(0,0){$\boxed{34,\mathbf{12}}$}}
\put(40,20){\makebox(0,0){${34,\mathbf{13}}$}} 
\put(80,20){\makebox(0,0){${13,\mathbf{13}}$}} 
\put(120,20){\makebox(0,0){${13,\mathbf{34}}$}} 
\put(160,20){\makebox(0,0){$\boxed{12,\mathbf{34}}$}} 

\put(-1,0){\makebox(0,0){$\boxed{4,\mathbf{1}}$}} 
\put(40,0){\makebox(0,0){${3,\mathbf{1}}$}} 
\put(80,0){\makebox(0,0){${3,\mathbf{3}}$}} 
\put(120,0){\makebox(0,0){${1,\mathbf{3}}$}} 
\put(160,0){\makebox(0,0){$\boxed{1,\mathbf{4}}$}}

\put(72,54){\vector(-2,-1){22}}
\put(88,54){\vector(2,-1){22}}

\put(29,40){\vector(-1,0){17}}
\put(52,40){\vector(1,0){17}}
\put(108,40){\vector(-1,0){17}}
\put(132,40){\vector(1,0){15}}

\put(10,20){\vector(1,0){21}}
\put(71,20){\vector(-1,0){22}}
\put(89,20){\vector(1,0){22}}
\put(149,20){\vector(-1,0){20}}

\put(33,0){\vector(-1,0){26}}
\put(47,0){\vector(1,0){27}}
\put(113,0){\vector(-1,0){26}}
\put(127,0){\vector(1,0){25}}

\put(40,24){\vector(0,1){12}}
\put(40,16){\vector(0,-1){12}}

\put(80,44){\vector(0,1){11}}
\put(80,36){\vector(0,-1){12}}
\put(80,4){\vector(0,1){12}}

\put(120,24){\vector(0,1){12}}
\put(120,16){\vector(0,-1){12}}

\end{picture} 
\end{center}
\caption{A grid seed for $\CC[\operatorname{Mat}_{4 \times 4}]$, cf.\ \cref{fig:better}.
Every mutable vertex in the quiver has degree three or four.}
\label{fig:gridquiver}
\end{figure}

\begin{theorem}\label{th:SLk}
The coordinate ring 
$\CC[\SL_k]$ of the special linear group has a cluster structure 
whose set of coefficient and cluster variables
includes all minors of a $k\times k$ matrix, except for the determinant of the matrix. 
\end{theorem}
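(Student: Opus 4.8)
The plan is to deduce the cluster structure on $\CC[\SL_k]$ from the one on $\CC[\operatorname{Mat}_{k\times k}]$ built in the proof of \cref{th:Matkk}, by specializing the determinant to~$1$. Write $\CC[\SL_k]=\CC[z_{ij}]/\langle\det z-1\rangle$, and recall that $\det z=\Delta_{[1,k],[1,k]}$ occurs as a frozen variable in every seed $(\tilde\xx,\tilde B)$ attached to a double wiring diagram (it is the $i=k$ member of the list of frozen minors in \cref{th:Matkk}). Deleting $\det z$ from the extended cluster~$\tilde\xx$ and deleting the corresponding frozen row from~$\tilde B$ produces a pair $(\tilde\xx^\circ,\tilde B^\circ)$, which I claim is a seed in $\CC(\SL_k)$ whose seed pattern realizes $\CC[\SL_k]$. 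That $(\tilde\xx^\circ,\tilde B^\circ)$ is a genuine seed follows from a transcendence count: $|\tilde\xx^\circ|=k^2-1=\dim\SL_k$, and each matrix entry $z_{ij}$, being a Laurent polynomial in $\tilde\xx$ over $\CC[\det z,\dots]$, becomes a rational function of $\tilde\xx^\circ$ once $\det z=1$ is imposed; hence $\tilde\xx^\circ$ generates $\CC(\SL_k)$ and, having the right cardinality, is algebraically independent.

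For the inclusion $\CC[\SL_k]\subseteq\AA(\tilde\xx^\circ,\tilde B^\circ)$ I would argue exactly as in the proof of \cref{th:Matkk}: every matrix entry $z_{ij}$ lies in the extended cluster of some double wiring diagram; any two double wiring diagrams are connected by local moves, i.e.\ by mutations that never touch the frozen vertex $\det z$; and deleting that vertex commutes with such mutations. Consequently every $z_{ij}$ lies in $\AA(\tilde\xx^\circ,\tilde B^\circ)$, and since the $z_{ij}$ generate $\CC[\SL_k]$, so does the whole cluster algebra.

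For the reverse inclusion I would use \cref{cor:cluster-criterion}. The ring $\CC[\SL_k]$ is finitely generated and, by \cref{pr:voskresenskii}, factorial, hence normal and Noetherian. Take for $(\tilde\xx^\circ,\tilde B^\circ)$ the grid seed of \cref{fig:better}--\cref{fig:gridquiver} with the vertex $\det z$ removed. Removing a vertex cannot increase the degree of any other vertex, so in the reduced quiver every mutable vertex still has degree three or four and every exchange relation is still a three-term Grassmann--Pl\"ucker relation (with $\det z$, wherever it appeared, now replaced by~$1$); therefore every cluster variable of this seed and of every adjacent cluster is a minor $\Delta_{I,J}$ with $(I,J)\neq([1,k],[1,k])$. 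Applying \cref{cor:cluster-criterion} then reduces everything to a single point: each such non-maximal minor must be an irreducible element of~$\CC[\SL_k]$.

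This last point is the only genuinely new ingredient, and I expect it to be the main obstacle. Via row and column permutations one reduces to the principal minor $\Delta_{[1,r],[1,r]}$ with $1\le r\le k-1$, so the task is to show that $\langle\Delta_{[1,r],[1,r]},\,\det z-1\rangle$ is prime in $\CC[z_{ij}]$, equivalently that the section $\{g\in\SL_k:\Delta_{[1,r],[1,r]}(g)=0\}$ is irreducible and reduced. One route: the hypersurface $\{\Delta_{[1,r],[1,r]}=0\}\subset\operatorname{Mat}_{k\times k}$ is irreducible (irreducibility of the generic determinant) and normal (its singular locus, where the $r\times r$ block drops to rank $\le r-2$, has codimension $\ge 2$ in it); it is moreover a cone, so all nonzero fibers of $\det z$ on it are isomorphic by scaling, which reduces the fiber over~$1$ to a generic fiber, giving reducedness and, once $\det z$ is seen not to be a composite function on this hypersurface, irreducibility. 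A cleaner shortcut works when the restricted polynomial is visibly homogeneous and irreducible: for $r=1$ one checks directly that $\det z\bmod z_{11}$ is irreducible, whence $\det z-1\bmod z_{11}$ is too; for the remaining minors one may either run a specialization argument in the spirit of the coprimality computation at the end of \cref{sec:rings-baseaffine}, or appeal to known irreducibility statements for determinantal sections. Once irreducibility is in place, \cref{cor:cluster-criterion} yields $\AA(\tilde\xx^\circ,\tilde B^\circ)\subseteq\CC[\SL_k]$, and together with the previous paragraph this identifies $\CC[\SL_k]=\AA(\tilde\xx^\circ,\tilde B^\circ)$.
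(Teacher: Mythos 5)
Your proposal is correct and follows essentially the same route as the paper, whose proof of Theorem~\ref{th:SLk} simply adapts the argument for $\CC[\operatorname{Mat}_{k\times k}]$: invoke Proposition~\ref{pr:voskresenskii} for factoriality (hence normality) of $\CC[\SL_k]$, keep the same seeds, and drop the determinant from the frozen variables since it equals~$1$. You go further than the paper in explicitly isolating and sketching the one genuinely new verification---that the non-maximal minors remain irreducible in $\CC[\SL_k]=\CC[z_{ij}]/\langle\det z-1\rangle$---which the paper leaves implicit in the phrase ``adapt the above arguments.''
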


\begin{proof}
To adapt the above arguments  to the case of $\CC[\SL_k]$, we use 
Proposition~\ref{pr:voskresenskii} to show that the ring $\CC[\SL_k]$ is
factorial, and hence  normal.  
The cluster variables, frozen variables, and clusters are the same as for 
$\CC[\operatorname{Mat}_{k \times k}]$, except that the 
$k \times k$ determinant of the entire matrix is no longer a frozen variable 
(as it is now equal to~1).
\end{proof}

\begin{remark}
One might expect that the cluster structures on 
$\CC[\operatorname{Mat}_{k \times k}]$ and $\CC[\SL_k]$ described in this section 
can be modified to yield a cluster structure in the coordinate ring
of a general linear group~$\GL_k$.  
However this cannot be achieved without tweaking the basic definitions,  
because the inverse of the determinant $\det^{-1}\in\CC[\GL_k]$ 
is a regular function that does not lie in the cluster algebra. 
(The ground ring for a cluster algebra is the polynomial ring
generated by the coefficient variables; it does not include their inverses.) 
As noted in Definition~\ref{def:cluster-algebra}, a common alternative
is to change the ground ring, adjoining the inverses of the coefficient variables 
(or ``localizing at coefficients''). 
With this convention, the coordinate ring $\CC[\GL_k]$ becomes a cluster algebra.
\end{remark}

\begin{remark}
The constructions presented above allow multiple generalizations and variations.  
In particular, one can replace $\SL_k$ by any connected, 
simply connected semisimple complex Lie group~$G$
and/or consider various subvarieties of~$G$, such as those related to \emph{double Bruhat cells}, 
see~\cite{ca3}.
\end{remark}



\section{The cluster structure in the ring $\CGr{a}{b}$}
\label{sec:plucker-rings}




The Grassmannian  $\operatorname{Gr}_{a,b}$ of $a$-dimensional subspaces
in~$\CC^b$ can be embedded in the projective space of dimension $\binom{b}{a}-1$ 
via the Pl\"ucker embedding; see, e.g., \cite[Corollary~2.3]{dolgachev}. 
Let $\widehat{\Gr}_{a,b}$ denote the affine cone over ${\rm Gr}_{a,b}$ 
taken in this embedding. 
The ring $\CGr{a}{b}$  
(the homogeneous coordinate ring of~$\operatorname{Gr}_{a,b}$) 
is generated by the Pl\"ucker coordinates~$P_J$,
where $J$ ranges over all $a$-element subsets of $\{1,\dots,b\}$).
These generators satisfy the quadratic \emph{Grassmann-Pl\"ucker relations}.

\begin{example}[cf.\ Section~\ref{sec:Ptolemy}]
The homogeneous
 coordinate ring $\CGr{2}{4}$ is generated by the six Pl\"ucker coordinates
$P_{12},P_{13},P_{14}, P_{23}, P_{24}, P_{34}$, 
which are subject to the single Grassmann-Pl\"ucker relation
\begin{equation}
\label{eq:13*24=}
P_{13} P_{24} = P_{12} P_{34} + P_{14} P_{23}\,. 
\end{equation}
This ring carries the structure of a cluster algebra of rank~1 in which
\begin{itemize}[leftmargin=.2in]
\item
the ambient field is the field $\CC(P_{12},P_{13},P_{14}, P_{23}, P_{34})$ 
of rational functions in five algebraically independent variables; 
\item 
the frozen variables are $P_{12},P_{23},P_{34},P_{14}$; 
\item
the cluster variables are $P_{13}$ and $P_{24}$; 
\item
the single exchange relation is \eqref{eq:13*24=}. 
\end{itemize}
\end{example}

The ring $\CGr{a}{b}$ is an archetypal object of classical 
invariant theory; see, e.g., \cite[Chapter~2]{dolgachev}, 
\cite[\S9]{popov-vinberg},
\cite[Section~11]{procesi-textbook},
and~\cite{weyl}. 
In invariant theory, this ring is typically given a somewhat different
description: 

\begin{definition}
Let $V\cong\CC^a$ be an $a$-dimensional complex vector space equipped with a volume form. 
The special linear group $\SL(V)\cong \SL_a(\CC)$ 
naturally acts on the vector space~$V^b$
of $b$-tuples of vectors, 
hence on its coordinate (polynomial) ring. \linebreak[3]
The \emph{Pl\"ucker ring}~$R_{a,b}$ is the ring 
\begin{equation}
\label{eq:Rkb}
R_{a,b}=\CC[V^b]^{\SL(V)}
\end{equation}
of $\SL(V)$-invariant polynomials on~$V^b$. 
As a subring of a polynomial ring, $R_{a,b}$ is a domain. 

In coordinate notation, the Pl\"ucker ring is described as follows.
Consider a matrix $z=(z_{ij})$ of size $a\times b$
filled with indeterminates.
The ring $R_{a,b}$ consists of polynomials in these $ab$
variables that are invariant under the transformations $z\mapsto gz$,
for $g\in\SL_a(\CC)$. 
One example of such a polynomial is a \emph{Pl\"ucker coordinate}~$P_J$
where $J$ is an $a$-element subset of columns in~$z$;
by definition, $P_J$ is the $a\times a$ minor of~$z$ 
occupying the columns in~$J$. 
\end{definition}

The First Fundamental Theorem of invariant theory,
which goes back to A.~Clebsch and H.~Weyl, 
states the following. 

\begin{theorem}
\label{th:FFT}
The Pl\"ucker ring $R_{a,b}$ is generated by the 
$\binom{b}{a}$ Pl\"ucker coordinates~$P_J$. 
\end{theorem}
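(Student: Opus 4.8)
The plan is to read $R_{a,b}$ off from the Cauchy (Schur--Weyl) decomposition of a polynomial ring and then use irreducibility of rectangular Schur modules to see that the Pl\"ucker coordinates generate it; the only substantial ingredient is the Cauchy decomposition itself, which I would simply quote.

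First I would dispose of the degenerate range $b<a$: a $b$-tuple of linearly independent vectors extends to a basis, and $\SL(V)$ acts transitively on such tuples (the extra basis vectors absorb the determinant), so $V^b$ carries a dense $\SL(V)$-orbit and $R_{a,b}=\CC$, matching the empty set of Pl\"ucker coordinates ($\binom{b}{a}=0$). So assume $b\ge a$ and write $W=\CC^b$, so that $V^b\cong V\otimes W$ and
\[
\CC[V^b]=\operatorname{Sym}(V^*\otimes W^*)
\]
as a module over $\GL(V)\times\GL(W)$, with $\SL(V)\subset\GL(V)$ acting through the first factor. Fixing a volume form $\omega$ on $V$ identifies $\Lambda^aV^*$ with $\CC$; under the resulting identification the Pl\"ucker coordinate $P_J$, for an $a$-subset $J=\{j_1<\dots<j_a\}$ of columns, corresponds to $\omega\otimes e_{j_1}^*\wedge\dots\wedge e_{j_a}^*$, so that the span of all $P_J$ is precisely the $\GL(W)$-submodule $\Lambda^aV^*\otimes\Lambda^aW^*\subset\operatorname{Sym}^a(V^*\otimes W^*)$, which we may identify with $\Lambda^aW^*$.

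Next I would invoke the Cauchy decomposition
\[
\operatorname{Sym}(V^*\otimes W^*)\;\cong\;\bigoplus_{\lambda}\mathbb{S}_\lambda(V^*)\otimes\mathbb{S}_\lambda(W^*)
\]
as $\GL(V)\times\GL(W)$-modules (sum over partitions $\lambda$ with at most $\min(a,b)=a$ rows, $\mathbb{S}_\lambda$ the Schur functor; see, e.g., \cite{fulton-harris, procesi-textbook}). The restriction to $\SL(V)$ of the irreducible $\GL(V)$-module $\mathbb{S}_\lambda(V^*)$ is again irreducible, and it is the trivial module precisely when $\lambda$ is a rectangle $(c^a)$ with $a$ rows, in which case $\mathbb{S}_{(c^a)}(V^*)\cong\CC$. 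Taking $\SL(V)$-invariants therefore gives
\[
R_{a,b}=\CC[V^b]^{\SL(V)}=\bigoplus_{c\ge0}\mathbb{S}_{(c^a)}(W^*),
\]
a direct sum of irreducible $\GL(W)$-modules whose $c$-th summand sits in polynomial degree $ac$; the degree-$a$ summand is $\mathbb{S}_{(1^a)}(W^*)=\Lambda^aW^*=\operatorname{span}\{P_J\}$. Now let $R'\subseteq R_{a,b}$ be the subring generated by the $P_J$. A product $P_{J_1}\cdots P_{J_c}$ is $\SL(V)$-invariant and homogeneous of degree $ac$, hence lies in $\mathbb{S}_{(c^a)}(W^*)$; thus the degree-$ac$ part of $R'$ is a $\GL(W)$-submodule of the irreducible module $\mathbb{S}_{(c^a)}(W^*)$, and it is nonzero since it contains $P_{1\cdots a}^c$ (recall $\CC[V^b]$ is a domain). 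Hence it equals $\mathbb{S}_{(c^a)}(W^*)$ for every $c$, and summing over $c$ yields $R'=R_{a,b}$.

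\textbf{Main obstacle and alternative.} Everything rests on the Cauchy decomposition (equivalently, Schur--Weyl duality for $\GL(V)$), which is itself a form of the First Fundamental Theorem and is the genuinely nontrivial input; I would just cite it. A self-contained classical alternative is Weyl's polarization method: the torus $(\CC^*)^b$ rescaling the $b$ vectors reduces the problem to multihomogeneous invariants, and full polarization reduces it further to \emph{multilinear} $\SL(V)$-invariants on $V^N$; a weight count forces $a\mid N$, and the core assertion --- that every multilinear $\SL(V)$-invariant is a $\CC$-linear combination of products of brackets $\det(v_{i_1},\dots,v_{i_a})$ --- is the classical multilinear FFT, proved by a dimension count (again via Schur--Weyl) together with a straightening argument. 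Restitution then turns products of brackets into products of Pl\"ucker coordinates, closing the argument. Either way, the essential difficulty is this same statement about multilinear invariants.
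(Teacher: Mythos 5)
Your argument is correct, but note that the paper does not actually prove Theorem~\ref{th:FFT}: it states the First Fundamental Theorem as a classical result and points the reader to \cite{weyl}, \cite[Chapter~2]{dolgachev}, \cite[\S9]{popov-vinberg}, and \cite[Section~11]{procesi-textbook}. So there is no proof in the text to compare against; what you have written supplies an actual argument where the paper offers a citation. The route you chose --- the Cauchy decomposition of $\operatorname{Sym}(V^*\otimes W^*)$ as a $\GL(V)\times\GL(W)$-module, identification of the $\SL(V)$-trivial constituents with the rectangular partitions $\lambda=(c^a)$, and then irreducibility of $\mathbb{S}_{(c^a)}(W^*)$ to force the subring generated by the $P_J$ to exhaust each graded piece --- is standard and all steps check out: the span of the $P_J$ is the $\GL(W)$-stable subspace $\Lambda^aV^*\otimes\Lambda^aW^*$ of $\operatorname{Sym}^a(V^*\otimes W^*)$, so the subring $R'$ it generates is $\GL(W)$-stable, and its degree-$ac$ part is a nonzero submodule of the irreducible module $\mathbb{S}_{(c^a)}(W^*)$ (nonzero because $P_{1\cdots a}^c\neq 0$ in a domain), hence equal to it. The disposal of the range $b<a$ via a dense $\SL(V)$-orbit is also fine. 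You are right that the entire weight of the argument rests on the Cauchy formula, which is comparable in strength to the FFT itself; quoting it is legitimate (it has independent proofs via the Cauchy identity for Schur functions together with complete reducibility) and is no worse than what the paper does. Your sketched alternative via polarization and the multilinear bracket FFT is the classical route of \cite{weyl} and would serve equally well if a more self-contained treatment were desired.
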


Theorem~\ref{th:FFT} implies that for $a \leq b$, 
the Pl\"ucker ring $R_{a,b}$ is isomorphic to~$\CGr{a}{b}$, 
the homogeneous coordinate ring  of the 
Grassmannian~$\Gr_{a,b}\,$.  Therefore we can talk interchangeably about these
two rings.

We note that the fact that the Pl\"ucker ring is finitely generated
is a special case of Theorem~\ref{th:hilbert}.

\begin{remark}
The Second Fundamental Theorem, which we will not need, 
describes the ideal of relations among the generators~$P_J$ of the Pl\"ucker ring~$R_{a,b}$. 
As mentioned above, this ideal is generated by certain quadratic relations, 
the Grassmann-Pl\"ucker relations. 
The 3-term Grassmann-Pl\"ucker relations are among the exchange relations
of the standard cluster structure on~$R_{a,b}$ described below.
When $3\le a\le b-3$, the Grassmann-Pl\"ucker relations include some 
longer quadratic relations which are not generated by the 3-term ones, 
cf.~Example~\ref{ex:Gr(3,6)} below. 
\end{remark}

\begin{corollary}[{\cite[Section~1.6b]{popov-vinberg-quasihomogeneous}}]
The Pl\"ucker ring $R_{a,b}$ is
factorial.
\end{corollary}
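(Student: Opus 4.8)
The plan is to deduce factoriality of $R_{a,b}$ directly from Theorem~\ref{th:inv-ring-is-factorial}, which asserts that if a connected algebraic group $G$ with no nontrivial characters acts on an affine variety $X$ and $\CC[X]$ is factorial, then $\CC[X]^G$ is factorial as well. By the defining formula~\eqref{eq:Rkb} we have $R_{a,b}=\CC[V^b]^{\SL(V)}$, so I would take $X=V^b$, the affine space of $a\times b$ matrices, and $G=\SL(V)\cong\SL_a(\CC)$, and simply check that these satisfy the hypotheses of the theorem.

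First I would record that $\CC[X]=\CC[z_{ij}: 1\le i\le a,\ 1\le j\le b]$ is a polynomial ring in $ab$ indeterminates, hence a unique factorization domain; in particular it is factorial. Next I would verify that $G=\SL_a(\CC)$ is a connected algebraic group with no nontrivial characters. Connectedness is standard. For the characters: any algebraic homomorphism $\chi\colon\SL_a(\CC)\to\CC^\times$ has abelian image, hence factors through the abelianization $\SL_a/[\SL_a,\SL_a]$; since $\SL_a$ equals its own commutator subgroup (equivalently, it is semisimple), $\chi$ must be trivial. Applying Theorem~\ref{th:inv-ring-is-factorial} with these choices then gives that $R_{a,b}=\CC[V^b]^{\SL(V)}$ is factorial, which is the assertion.

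There is no genuine obstacle here: the corollary is a one-line consequence of the cited factoriality criterion once the hypotheses are unwound, and the only point requiring even a moment's thought is that $\SL_a(\CC)$ carries no nontrivial characters, which is routine given $\SL_a=[\SL_a,\SL_a]$. (One could alternatively note that via Theorem~\ref{th:FFT} this also describes the homogeneous coordinate ring $\CGr{a}{b}$ of the Grassmannian for $a\le b$, but the invariant-theoretic route above is the most direct and is the one I would use.)
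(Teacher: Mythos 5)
Your proof is correct and is exactly the paper's argument: apply Theorem~\ref{th:inv-ring-is-factorial} to $X=V^b$ (whose coordinate ring is a polynomial ring, hence factorial) and $G=\SL(V)$, which is connected and, being semisimple, has no nontrivial characters. Nothing is missing.
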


\begin{proof}
This follows from Theorem~\ref{th:inv-ring-is-factorial} and~\eqref{eq:Rkb}. 
(Being semisimple, $\SL_a$~has no nontrivial characters.)
\end{proof}

We next describe a cluster structure in the Pl\"ucker ring~$R_{a,b}$
\cite{scott}.
While canonical up to a ring automorphism, 
this structure will depend on the choice of a \emph{cyclic ordering} of the $b$
vectors. 

The set of coefficient variables for this cluster structure in~$R_{a,b}$
consists of the $b$ Pl\"ucker coordinates~$P_J$ where $J$ is a contiguous segment modulo~$b$.   
For example, the coefficient variables for $R_{3,7}$ are the Pl\"ucker coordinates
\[
P_{123}, P_{234}, P_{345}, P_{456}, P_{567}, P_{167}, P_{127}. 
\]

We will work with some distinguished seeds: 
the \emph{rectangles seed} $\Sigma_{a,b}$, together with its cyclic shifts
$\Sigma_{a,b}^i$ for $1 \leq i \leq b-1$.
To define the rectangles seed $\Sigma_{a,b}$, we first
construct a quiver $Q_{a,b}$ whose vertices are labeled by the 
rectangles contained in an $a \times (b-a)$ rectangle~$R$,
including the empty rectangle~$\varnothing$.
The frozen vertices of $Q_{a,b}$ are labeled by 
the rectangles of sizes $a\times j$ (with $1\le j\le b-a$), 
rectangles of sizes $i\times (b-a)$ (with $1\le i\le a$), and the empty rectangle. 
The arrows from an $i\times j$ rectangle go to rectangles of sizes $i\times (j+1)$, 
$(i+1)\times j$, and $(i-1)\times (j-1)$ (assuming those rectangles have nonzero 
dimensions, fit inside~$R$, and the arrow does not connect two frozen vertices).
There is also an arrow from the frozen vertex labeled by~$\varnothing$
to the vertex labeled by the $1\times 1$ rectangle. 
See Figure~\ref{fig:G37-Le-quiver2}.  

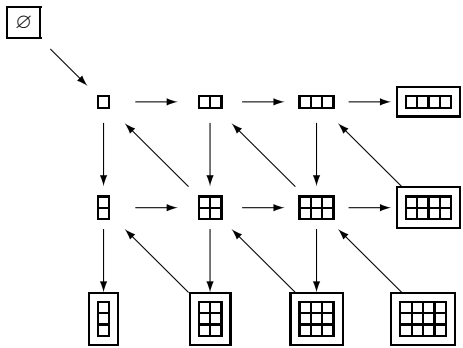
\begin{figure}[ht]
\begin{center}
\setlength{\unitlength}{2pt} 
\begin{picture}(80,55)(0,-1) 
\put(20,20){\makebox(0,0){${\ydiagram{1,1}}$}} 
\put(20,40){\makebox(0,0){${\ydiagram{1}}$}} 
\put(40,20){\makebox(0,0){${\ydiagram{2,2}}$}} 
\put(40,40){\makebox(0,0){${\ydiagram{2}}$}} 
\put(60,20){\makebox(0,0){${\ydiagram{3,3}}$}} 
\put(60,40){\makebox(0,0){${\ydiagram{3}}$}} 

\put(5,55){\makebox(0,0){$\boxed{\scriptstyle\varnothing}$}} 
\put(20,-1){\makebox(0,0){$\boxed{\ydiagram{1,1,1}}$}} 
\put(40,-1){\makebox(0,0){$\boxed{\ydiagram{2,2,2}}$}} 
\put(60,-1){\makebox(0,0){$\boxed{\ydiagram{3,3,3}}$}} 
\put(80,-1){\makebox(0,0){$\boxed{\ydiagram{4,4,4}}$}} 
\put(81,20){\makebox(0,0){$\boxed{\ydiagram{4,4}}$}} 
\put(81,40){\makebox(0,0){$\boxed{\ydiagram{4}}$}} 

\put(20,16){\vector(0,-1){12}}
\put(40,36){\vector(0,-1){12}}

\put(20,36){\vector(0,-1){12}}
\put(60,36){\vector(0,-1){12}}

\put(26,20){\vector(1,0){8}}
\put(66,20){\vector(1,0){8}}
\put(46,40){\vector(1,0){8}}

\put(26,40){\vector(1,0){8}}
\put(66,40){\vector(1,0){8}}
\put(46,20){\vector(1,0){8}}

\put(10,50){\vector(1,-1){7}}

\put(40,16){\vector(0,-1){12}}
\put(60,16){\vector(0,-1){12}}

\put(36, 4){\vector(-1,1){12}}
\put(56, 4){\vector(-1,1){12}}
\put(76, 4){\vector(-1,1){12}}
\put(36, 24){\vector(-1,1){12}}
\put(56, 24){\vector(-1,1){12}}
\put(76, 24){\vector(-1,1){12}}
\end{picture} 
\end{center}
\caption{The quiver $Q_{3,7}$. 
The vertices are labeled by rectangles contained in a $3 \times 4$ rectangle,
and arranged in a (triangulated) grid. 
The width and height of the rectangles increase
from left to right and from top to bottom, respectively.  
}
\label{fig:G37-Le-quiver2}
\end{figure}

We map each rectangle $r$ contained in the $a \times (b-a)$ rectangle
$R$ to an $a$-element subset of 
$\{1,2,\dots, b\}$ (representing a Pl\"ucker coordinate), as follows. 
We justify $r$  
so that its upper left corner coincides with the upper left corner
of $R$.  There is a path of length $b$ from the northeast corner of 
$R$ to the southwest corner of $R$ which cuts out the 
smaller rectangle $r$;  we label the steps of this path from $1$ to~$b$. 
We then map $r$ to the set of labels $J(r)$ of the vertical steps on this path. 
This construction allows us to assign to each vertex of the quiver $Q_{a,b}$ 
a particular Pl\"ucker coordinate. 
We set 
\begin{equation*}
\tilde\xx^{a,b} = 
 \{P_{J(r)} \ \vert \ \text{ $r$ is a rectangle contained in an
$a \times (b-a)$ rectangle}\} , 
\end{equation*}
and then define the \emph{rectangles seed} $\Sigma_{a,b}=(\tilde\xx^{a,b}, \tilde B(Q_{a,b}))$.
See Figure~\ref{fig:G37-Le-quiver}.

\begin{figure}[ht]
\begin{center}
\setlength{\unitlength}{2pt} 
\begin{picture}(80,65)(0,0) 
\put(20,20){\makebox(0,0){${457}$}} 
\put(20,40){\makebox(0,0){${467}$}} 
\put(40,20){\makebox(0,0){${347}$}} 
\put(40,40){\makebox(0,0){${367}$}} 
\put(60,20){\makebox(0,0){${237}$}} 
\put(60,40){\makebox(0,0){${267}$}} 

\put(-1,61){\makebox(0,0){$\boxed{567}$}} 
\put(20,-1){\makebox(0,0){$\boxed{456}$}} 
\put(40,-1){\makebox(0,0){$\boxed{345}$}} 
\put(60,-1){\makebox(0,0){$\boxed{234}$}} 
\put(80,-1){\makebox(0,0){$\boxed{123}$}} 
\put(81,20){\makebox(0,0){$\boxed{127}$}} 
\put(81,40){\makebox(0,0){$\boxed{167}$}} 

\put(20,16){\vector(0,-1){12}}
\put(40,36){\vector(0,-1){12}}

\put(20,36){\vector(0,-1){12}}
\put(60,36){\vector(0,-1){12}}

\put(26,20){\vector(1,0){8}}
\put(66,20){\vector(1,0){8}}
\put(46,40){\vector(1,0){8}}

\put(26,40){\vector(1,0){8}}
\put(66,40){\vector(1,0){8}}
\put(46,20){\vector(1,0){8}}

\put(4,56){\vector(1,-1){12}}

\put(40,16){\vector(0,-1){12}}
\put(60,16){\vector(0,-1){12}}

\put(36, 4){\vector(-1,1){12}}
\put(56, 4){\vector(-1,1){12}}
\put(76, 4){\vector(-1,1){12}}
\put(36, 24){\vector(-1,1){12}}
\put(56, 24){\vector(-1,1){12}}
\put(76, 24){\vector(-1,1){12}}
\end{picture} 
\end{center}
\caption{The rectangles seed $\Sigma_{3,7}$.}
\label{fig:G37-Le-quiver}
\end{figure}
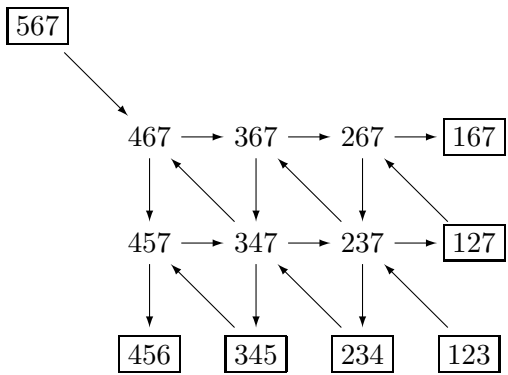

\begin{remark}
When $a=2$, the rectangles seed $\Sigma_{2,b}$ coincides with the 
seed associated to the triangulation of the polygon $\mathbf{P}_b$
that uses all of the diagonals incident to the vertex~$b$,
cf.\ \cref{ex:realization-An} and Definition~\ref{def:Q(T)-polygon}.
\end{remark}

Given an $a$-element subset $J=\{j_1,j_2, \dots, j_a\}\subset\{1,2,\dots, b\}$ 
and a positive integer $i$, 
we define 
\[
(J+i)\bmod b = \{j_1+i, j_2+i, \dots, j_a+i\}, 
\]
where the sums are taken modulo~$b$.  
We define a quiver $Q_{a,b}^i$ and seed $\Sigma_{a,b}^i$ 
by replacing each vertex 
label
$J$ in $Q_{a,b}$ by $(J+i)\bmod b$.  (The quivers $Q_{a,b}$ and $Q_{a,b}^i$ are 
exactly the same; only their vertex labels are different.)

\begin{exercise}\label{ex:cyclic}
Start from the seed $\Sigma_{a,b}$ and mutate at each of the mutable vertices 
of $Q_{a,b}$ exactly once, in the following order:
mutate each row from left to right, 
starting from the bottom row and ending at the top row. 
Show that at the end of this process, one obtains the seed~$\Sigma_{a,b}^1$.  
For example, in Figure~\ref{fig:G37-Le-quiver}, 
mutating at $457$, $347$, $237$, $467$, $367$, $267$ (in this order) 
recovers the same quiver but with each label cyclically shifted.

The same mutation sequence transforms the seed $\Sigma_{a,b}^i$ into 
$\Sigma_{a,b}^{i+1}$.
Therefore the rectangles seed and all its cyclic shifts are 
mutation equivalent.
\end{exercise}

\begin{theorem}
\label{th:Grassmain}
The seed pattern defined by $\Sigma_{a,b}$ 
(or by any of its cyclic shifts, cf.\ Exercise \ref{ex:cyclic}) gives
the Pl\"ucker ring $R_{a,b} = \CGr{a}{b}$ 
the structure of a cluster algebra.  
\end{theorem}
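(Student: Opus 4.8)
The plan is to prove the two inclusions $R_{a,b}\subseteq\AA(\Sigma_{a,b})$ and $\AA(\Sigma_{a,b})\subseteq R_{a,b}$ separately, in the style used throughout this section: the forward inclusion will follow once every Pl\"ucker coordinate is exhibited as a cluster or coefficient variable, together with the First Fundamental Theorem (Theorem~\ref{th:FFT}); the reverse inclusion will come from the Starfish Lemma (Proposition~\ref{prop:cluster-criterion}). By Exercise~\ref{ex:cyclic} all the cyclic shifts $\Sigma_{a,b}^i$ are mutation equivalent to $\Sigma_{a,b}$, so it suffices to treat $\Sigma_{a,b}$ itself. At the outset I would record that $R_{a,b}$ is finitely generated (Theorem~\ref{th:hilbert}, or Theorem~\ref{th:FFT}) and factorial, hence a normal Noetherian domain, so the standing hypotheses on the ground ring in Proposition~\ref{prop:cluster-criterion} hold; moreover $\operatorname{trdeg}_\CC \operatorname{Frac}(R_{a,b})=\dim\widehat{\Gr}_{a,b}=a(b-a)+1$, which equals the number $(a-1)(b-a-1)+b$ of vertices of $Q_{a,b}$.

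For $R_{a,b}\subseteq\AA(\Sigma_{a,b})$ I would first check that the exchange relations out of $\Sigma_{a,b}$ are genuine identities among Pl\"ucker coordinates. A direct inspection of $Q_{a,b}$ shows that each mutable vertex, a proper rectangle $r=i\times j$ with $1\le i\le a-1$ and $1\le j\le b-a-1$, has equal in- and out-degree, namely $2$ or $3$, with the value $3$ occurring exactly for the interior rectangles having $2\le i\le a-1$ and $2\le j\le b-a-1$. At a degree-$4$ vertex the exchange relation is a $3$-term Grassmann-Pl\"ucker relation \eqref{eq:grassmann-plucker-3term} and the new variable is again a Pl\"ucker coordinate; at a degree-$6$ vertex it reads $P_{J(r)}\,P'_{J(r)}=P_{J(r_1)}P_{J(r_2)}P_{J(r_3)}+P_{J(s_1)}P_{J(s_2)}P_{J(s_3)}$, and I would identify the new variable as a quadratic expression $P'_{J(r)}=P_AP_B-P_CP_D$ in Pl\"ucker coordinates --- exactly the way the function $\Omega$ of \eqref{eq:Omega-Jbc} was evaluated in \eqref{eq:Omega-Jbc2} using Muir's Law (Proposition~\ref{prop:Muir}). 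In particular every cluster variable one step away from $\Sigma_{a,b}$ lies in $R_{a,b}$. I would then show, using the cyclic shifts $\Sigma_{a,b}^i$ together with a bounded number of further explicit mutation sequences, that every Pl\"ucker coordinate $P_J$ (with $J$ an $a$-element subset of $\{1,\dots,b\}$) occurs as a cluster or coefficient variable of the seed pattern. Since then every Pl\"ucker coordinate is a rational function in the extended cluster $\tilde\xx^{a,b}$ and $|\tilde\xx^{a,b}|=a(b-a)+1=\operatorname{trdeg}_\CC \operatorname{Frac}(R_{a,b})$, the set $\tilde\xx^{a,b}$ is algebraically independent and generates $\operatorname{Frac}(R_{a,b})$, so $\Sigma_{a,b}$ is a genuine seed there. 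Finally every Pl\"ucker coordinate lies in $\AA(\Sigma_{a,b})$, and by Theorem~\ref{th:FFT} the Pl\"ucker coordinates generate $R_{a,b}$, whence $R_{a,b}\subseteq\AA(\Sigma_{a,b})$.

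For the reverse inclusion $\AA(\Sigma_{a,b})\subseteq R_{a,b}$ I would apply Proposition~\ref{prop:cluster-criterion} with $\Rcal=R_{a,b}$ and the seed $\Sigma_{a,b}$. Condition~(1) holds since $\tilde\xx^{a,b}$ consists of Pl\"ucker coordinates. For condition~(2), distinct Pl\"ucker coordinates are coprime: each $P_J$ is an irreducible polynomial in the matrix entries $z_{ij}$ (a generic maximal minor is irreducible, cf.\ \cite[Theorem~3.2]{denis-serre}), hence an irreducible element of $R_{a,b}$, and no two are associates. For condition~(3), the new variable $x_\ell'$ at a mutable vertex $r$ lies in $R_{a,b}$ by the previous paragraph, and it remains to see that it is coprime to $x_\ell=P_{J(r)}$. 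If $r$ has degree $4$, then $x_\ell'$ is a Pl\"ucker coordinate distinct from $P_{J(r)}$, hence irreducible and non-associate, so coprime. If $r$ has degree $6$, then $x_\ell'=P_AP_B-P_CP_D$, and since $P_{J(r)}$ is prime it is enough to show $P_{J(r)}\nmid x_\ell'$; here I would use a specialization of the $z_{ij}$ that leaves $P_{J(r)}$ unchanged while killing one of the two monomials of $x_\ell'$ and turning the other into a transparent product of irreducibles, none of them a scalar multiple of $P_{J(r)}$ --- precisely the device used at the end of Section~\ref{sec:rings-baseaffine}. Proposition~\ref{prop:cluster-criterion} then gives $\AA(\Sigma_{a,b})\subseteq R_{a,b}$, and combined with the forward inclusion this yields $\AA(\Sigma_{a,b})=R_{a,b}=\CGr{a}{b}$; the same holds for each $\Sigma_{a,b}^i$ by mutation equivalence.

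I expect the main obstacle to be the degree-$6$ interior vertices of $Q_{a,b}$: one must pin down explicitly the ``long'' Pl\"ucker identity realizing the corresponding exchange relation (the Muir's-Law evaluation giving $P'_{J(r)}=P_AP_B-P_CP_D$), and then run the specialization argument establishing coprimality of $P'_{J(r)}$ and $P_{J(r)}$. The surrounding combinatorics --- confirming that the rectangle labels encode all exchange relations of $\Sigma_{a,b}$ correctly, and that the cyclic shifts together with finitely many mutations reach every one of the $\binom{b}{a}$ Pl\"ucker coordinates --- is routine but needs care. Alternatively, once all cluster variables are known to be regular on $\widehat{\Gr}_{a,b}$, one could package the forward inclusion and the identification of the ambient field through Proposition~\ref{pr:geom-realization-criterion}, using that $\widehat{\Gr}_{a,b}$ is rational of dimension $a(b-a)+1$; but the route sketched above is self-contained.
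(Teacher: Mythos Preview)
Your reverse inclusion $\AA(\Sigma_{a,b})\subseteq R_{a,b}$ via the Starfish Lemma is essentially identical to the paper's: the degree-$4$/degree-$6$ split, the identification of the long exchange product as a quadratic $P_AP_B-P_CP_D$, and the specialization argument for coprimality are exactly what the paper does (in fact the paper writes the degree-$6$ relation out explicitly as $P_{ijkS}P'_{ijkS}=P_{ikfS}P_{ijdS}P_{jkeS}+P_{ikdS}P_{ijeS}P_{jkfS}$ with $P'_{ijkS}=P_{ikfS}P_{jdeS}-P_{jkdS}P_{iefS}$, and specializes four matrix entries to zero).

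The forward inclusion is where your proposal has a real gap. You write that ``the cyclic shifts $\Sigma_{a,b}^i$ together with a bounded number of further explicit mutation sequences'' will reach every Pl\"ucker coordinate, but you do not supply these sequences, and there is no obvious uniform bound: the number of Pl\"ucker coordinates is $\binom{b}{a}$, while the cyclic shifts of the rectangles seed only produce Pl\"ucker coordinates whose index sets are cyclic shifts of rectangle labels. The paper closes this gap with a different device: an induction on~$a$ via the \emph{Muir embedding} $R_{a-1,b-1}\hookrightarrow R_{a,b}$, $P_I\mapsto P_{I\cup\{b\}}$ (Lemma~\ref{lem:Muir-embedding}). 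One checks (Lemma~\ref{lem:subpattern}) that deleting the bottom row of $Q_{a,b}$, freezing the new bottom row, and stripping the index~$b$ from every label produces $Q_{a-1,b-1}$; hence the seed pattern of $\Sigma_{a-1,b-1}$ embeds as a subpattern of that of $\Sigma_{a,b}$. By induction every $P_I\in R_{a-1,b-1}$ is a cluster variable, so every $P_{I\cup\{b\}}\in R_{a,b}$ is as well, and then a single application of Exercise~\ref{ex:cyclic} (cyclic shift) shows that \emph{every} $P_J$ is a cluster variable. This inductive step is the missing idea in your argument; once you have it, the rest of your outline goes through.
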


Theorem~\ref{th:Grassmain} was first proved in \cite{scott}, using results from~\cite{postnikov}.  
Below in this section we give a different (and self-contained) proof.

\begin{remark}
There is a well known isomorphism $R_{a,b}\to R_{b-a,b}$
defined by $P_J\mapsto P_{J^c}$, where $J^c = \{1,2,\dots,b\} \setminus J$.
This isomorphism extends to an isomorphism 
between respective seed patterns in $R_{a,b}$ and~$R_{b-a,b}$. 
\end{remark}

\begin{lemma}\label{lem:Muir-embedding}
There is an injective ring homomorphism 
$R_{a-1,b-1} \to R_{a,b}$ which sends
$P_I$ to $P_{I \cup \{b\}}$.
\end{lemma}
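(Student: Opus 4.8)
The plan is to obtain the homomorphism as a composition of three maps, each of which is visibly injective, so that the only content is an index bookkeeping. First I would use the complementation isomorphism recorded in the preceding Remark, applied with ground set $\{1,\dots,b-1\}$: this is an isomorphism
\[
c_{b-1}\colon R_{a-1,b-1}\ \xrightarrow{\ \sim\ }\ R_{b-a,\,b-1},\qquad P_I\longmapsto P_{\{1,\dots,b-1\}\smallsetminus I}.
\]
Dually, complementation with ground set $\{1,\dots,b\}$ gives an isomorphism $c_{b}\colon R_{b-a,\,b}\xrightarrow{\ \sim\ }R_{a,b}$, $P_L\mapsto P_{\{1,\dots,b\}\smallsetminus L}$.

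The middle map is the only one that requires an argument, and it is short. Let $\pi\colon (\CC^{b-a})^{b}\to(\CC^{b-a})^{b-1}$ be the projection that forgets the last vector. It is a surjective linear map, hence dominant, so its comorphism $\pi^{*}$ is an injective $\CC$-algebra homomorphism between the coordinate rings of these affine spaces. Since $\pi$ is equivariant for the diagonal action of $\SL_{b-a}$, the map $\pi^{*}$ sends invariants to invariants and therefore restricts to an injective ring homomorphism $\iota\colon R_{b-a,\,b-1}\hookrightarrow R_{b-a,\,b}$. Moreover, for any $(b-a)$-element subset $K\subseteq\{1,\dots,b-1\}$, the Pl\"ucker coordinate $P_K$ of a $(b-1)$-tuple of vectors involves only the vectors indexed by $K$, none of which is discarded by $\pi$; hence $\iota(P_K)=P_K$, now read as a Pl\"ucker coordinate in $R_{b-a,b}$. (Theorem~\ref{th:FFT} enters only to say that the $P_K$ generate $R_{b-a,b-1}$, which makes the formula for $\iota$ on all of $R_{b-a,b-1}$ transparent.)

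Setting $\Phi=c_{b}\circ\iota\circ c_{b-1}$, I then get an injective ring homomorphism $R_{a-1,b-1}\to R_{a,b}$ as a composite of injections, and it remains only to evaluate it on a generator. With $K=\{1,\dots,b-1\}\smallsetminus I$, the element $P_I$ is sent by $c_{b-1}$ to $P_{K}$, by $\iota$ to $P_{K}$, and by $c_{b}$ to $P_{\{1,\dots,b\}\smallsetminus K}$; since $\{1,\dots,b\}\smallsetminus K=\big(\{1,\dots,b-1\}\smallsetminus K\big)\cup\{b\}=I\cup\{b\}$, we conclude $\Phi(P_I)=P_{I\cup\{b\}}$, as desired.

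I do not anticipate a genuine obstacle here; the only subtlety is orientation. The naive construction — adjoining a fixed last vector and restricting functions — runs the wrong way, producing a surjection $R_{a,b}\twoheadrightarrow R_{a-1,b-1}$ (with $P_{I\cup\{b\}}\mapsto P_I$ and $P_J\mapsto 0$ for $b\notin J$) rather than an embedding; while the honest ``quotient $\CC^{a}$ by the last vector'' map is only rational and its comorphism sends $P_I$ to $P_{I\cup\{b\}}$ divided by a matrix entry, which would require an extra rescaling organized through the multigrading. Routing through complementation converts ``add a row and a column'' into the trivial operation ``forget a vector'' and sidesteps this. If one preferred, one could instead establish well-definedness of $\Phi$ directly via Muir's Law (Proposition~\ref{prop:Muir}) applied to the Grassmann--Pl\"ucker relations, and then deduce injectivity from the observation that the restriction map $R_{a,b}\to R_{a-1,b-1}$ mentioned above is a left inverse of $\Phi$.
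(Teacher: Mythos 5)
Your argument is correct, but it is a genuinely different route from the paper's. The paper's entire proof is a one-line appeal to Muir's Law (Proposition~\ref{prop:Muir}): any polynomial identity among minors remains an identity after each minor is extended by the same additional row and column, so the assignment $P_I\mapsto P_{I\cup\{b\}}$ respects all relations and hence defines a ring homomorphism (injectivity is left implicit there). You instead factor the map as $c_b\circ\iota\circ c_{b-1}$, using the complementation isomorphism of the preceding Remark twice and the pullback along the ``forget the last vector'' projection in between. This buys you two things: well-definedness comes for free from the fact that each factor is a bona fide ring map (no appeal to Muir's Law or to knowledge of the relation ideal), and injectivity is manifest since you compose two isomorphisms with an injection --- whereas the paper's proof, read literally, establishes only the homomorphism property. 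The cost is that your argument leans on the Remark's assertion that complementation is an isomorphism sending $P_J$ to $P_{J^c}$ \emph{with no signs}; if one insists on the honest duality $P_J\mapsto\pm P_{J^c}$, the composite could a priori acquire an $I$-dependent sign, and one must note that the sign $\sgn(J,J^c)$ can be absorbed by rescaling the $b$ vectors by $\pm1$, so the sign-free version of the Remark is legitimate and your computation $\{1,\dots,b\}\smallsetminus(\{1,\dots,b-1\}\smallsetminus I)=I\cup\{b\}$ goes through exactly as written. Your closing observation --- that injectivity can alternatively be read off from the left inverse given by restricting to tuples with $v_b$ fixed and the remaining vectors in a hyperplane --- is the natural supplement to the paper's own Muir's-Law argument and is worth recording.
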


\begin{proof}
The fact that the correspondence $P_I\mapsto P_{I \cup \{b\}}$ 
extends to a ring homomorphism follows from Muir's law (Proposition~\ref{prop:Muir}). 
\end{proof}

We call the map $R_{a-1,b-1} \to R_{a,b}$ described above the \emph{Muir embedding}.

Recall the notion of a seed subpattern from 
Definition~\ref{def:clustersubalgebra}. 

\begin{lemma}
\label{lem:subpattern}
The Muir embedding sends the seed pattern in $R_{a-1,b-1}$ defined by $\Sigma_{a-1,b-1}$ 
to a subpattern of the seed pattern in $R_{a,b}$ defined by~$\Sigma_{a,b}$.
\end{lemma}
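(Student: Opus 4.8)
The plan is to realize the Muir embedding, at the level of seeds, as a ``freeze-and-delete'' operation applied to $\Sigma_{a,b}$, and then to propagate it through mutations using the definition of a seed subpattern. The first step is to match up the underlying rectangle combinatorics. Since $(b-1)-(a-1)=b-a$, the ground rectangle $R'$ of $\Sigma_{a-1,b-1}$ is naturally the top $a-1$ rows of the ground rectangle $R$ of $\Sigma_{a,b}$, which identifies the vertex set of $Q_{a-1,b-1}$ with the set of vertices of $Q_{a,b}$ indexed by rectangles of height at most $a-1$. For such a rectangle $r$, the cut-out path in $R$ agrees with the cut-out path in $R'$ up to the lower-left corner of $r$ and then performs exactly one more downward step before terminating; reading off which steps are vertical, this shows that the label $J(r)$ computed inside $\Sigma_{a,b}$ equals the label $J(r)$ computed inside $\Sigma_{a-1,b-1}$ together with the extra element~$b$. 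Hence the Muir embedding $P_I\mapsto P_{I\cup\{b\}}$ carries $\tilde\xx^{a-1,b-1}$ bijectively onto the subset of $\tilde\xx^{a,b}$ indexed by rectangles of height at most $a-1$.

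The second step is the quiver comparison. The arrow rules defining $Q_{a,b}$ and $Q_{a-1,b-1}$ are the same local rules ($i\times j$ emits arrows to $i\times(j+1)$, $(i+1)\times j$, $(i-1)\times(j-1)$, plus the arrow out of~$\varnothing$), so on the common vertex set the arrows coincide; the only discrepancy in the frozen/mutable split is the row of vertices of size $(a-1)\times j$ with $1\le j\le b-a-1$, which is mutable in $Q_{a,b}$ but frozen in $Q_{a-1,b-1}$. One then records that no arrow of $Q_{a,b}$ joins a vertex of height exactly $a$ to a vertex that remains mutable in $Q_{a-1,b-1}$ (all such vertices have height at most $a-2$), so the vertices of height $a$ are disconnected from the retained mutable part. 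Consequently $Q_{a-1,b-1}$ is obtained from $Q_{a,b}$ by freezing that one row of vertices and then deleting the height-$a$ frozen vertices (equivalently, deleting the frozen variables $P_{J(r)}$ indexed by rectangles $r$ of height~$a$), an operation that leaves the exchange matrices attached to the surviving mutable vertices intact.

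Combining the two steps, under the Muir identification of Pl\"ucker coordinates, freezing the indicated vertices of $\Sigma_{a,b}$ and deleting the indicated frozen variables produces precisely the seed $\Sigma_{a-1,b-1}$; by the definition of a seed subpattern (Definition~\ref{def:clustersubalgebra}), mutating this seed only at the retained mutable vertices yields a subpattern of the seed pattern of $\Sigma_{a,b}$, and since those mutations and their exchange relations agree with the ones generating the seed pattern of $\Sigma_{a-1,b-1}$ (by the quiver comparison), the Muir embedding carries the whole $\Sigma_{a-1,b-1}$-pattern onto that subpattern. I expect the main obstacle to be the bookkeeping in the second step --- keeping track of which vertices change frozen/mutable status and checking that the discarded height-$a$ part really is disconnected from the mutable part --- rather than the label identity, which is a short direct computation with the path description of $J(r)$.
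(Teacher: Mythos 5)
Your proposal is correct and follows essentially the same route as the paper's proof, which simply describes the freeze-and-delete operation on $Q_{a,b}$ (delete the bottom row, freeze the new bottom row, delete arrows between frozen vertices, strip the index $b$ from every label) to recover $Q_{a-1,b-1}$. Your write-up supplies the details the paper leaves implicit --- the path computation showing $J(r)$ in $\Sigma_{a,b}$ equals the corresponding label in $\Sigma_{a-1,b-1}$ union $\{b\}$, and the check that the height-$a$ vertices attach only to frozen vertices of the restricted seed --- and both details check out.
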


\begin{proof}
Delete the bottom row of vertices in the rectangles quiver~$Q_{a,b}$. 
Freeze the vertices of the new bottom row of the resulting quiver. 
Delete any arrows connecting two frozen vertices.  
Then remove the index~$b$ from every label. 
This will produce the rectangles quiver $Q_{a-1,b-1}$, with its standard labeling. 
\end{proof}

\begin{remark}
Similarly,  the seed pattern  in $R_{a,b-1}$ defined by $\Sigma_{a,b-1}$
is isomorphic to a seed subpattern of the seed pattern in $R_{a,b}$ defined by~$\Sigma_{a,b}$. 
(This involves deleting the rightmost column of the quiver~$Q_{a,b}$.) 
\end{remark}

\begin{lemma}
\label{lem:PluckerInCluster}
$R_{a,b} \subset \mathcal{A}(\Sigma_{a,b})$. 
\end{lemma}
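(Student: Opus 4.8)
The goal is to show that every Pl\"ucker coordinate $P_J$ (with $J$ an $a$-element subset of $\{1,\dots,b\}$) lies in the cluster algebra $\AA(\Sigma_{a,b})$; since by Theorem~\ref{th:FFT} the $P_J$ generate $R_{a,b}$, this gives the desired inclusion. The plan is to argue by induction on $b$ (with $a$ also varying), using the two ``functorial'' tools already set up: the Muir embedding (Lemma~\ref{lem:Muir-embedding}) together with the seed-subpattern compatibility (Lemma~\ref{lem:subpattern}), and the cyclic-shift symmetry of the rectangles seed (Exercise~\ref{ex:cyclic}).

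First I would dispose of the base cases: when $a=1$ or $a=b-1$ (equivalently, after the isomorphism $R_{a,b}\cong R_{b-a,b}$, when $a=1$), all $P_J$ are either frozen or cluster variables of $\Sigma_{a,b}$ directly, so the statement is immediate; the case of small $b$ is likewise trivial. For the inductive step, fix $J\subsetneq\{1,\dots,b\}$ with $|J|=a$. I would split into two cases according to whether $J$ omits some index or contains some index that lets us reduce the ``size'' of the problem. Concretely: if $b\notin J$, then $J$ is an $a$-subset of $\{1,\dots,b-1\}$, and by the Remark following Lemma~\ref{lem:subpattern} (deleting the rightmost column of $Q_{a,b}$) the seed pattern of $\Sigma_{a,b-1}$ sits inside that of $\Sigma_{a,b}$; by the inductive hypothesis $P_J\in\AA(\Sigma_{a,b-1})\subset\AA(\Sigma_{a,b})$. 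If instead $b\in J$, write $J = I\cup\{b\}$ with $I$ an $(a-1)$-subset of $\{1,\dots,b-1\}$; by Lemma~\ref{lem:subpattern} the Muir embedding carries the seed pattern of $\Sigma_{a-1,b-1}$ into that of $\Sigma_{a,b}$, sending $P_I\mapsto P_{I\cup\{b\}}=P_J$, and by induction $P_I\in\AA(\Sigma_{a-1,b-1})$, hence $P_J\in\AA(\Sigma_{a,b})$.

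The one gap in the above dichotomy is that it only handles $P_J$ for which either $b\notin J$ or $b\in J$ — which is of course every $J$, but one must check the induction is genuinely well-founded: the first case decreases $b$ keeping $a$ fixed, the second decreases both $a$ and $b$ by one, so in all cases the pair $(a,b)$ strictly decreases in, say, the lexicographic order on $b$ then $a$, and we never leave the range $1\le a\le b-1$ (when $b\notin J$ we have $a\le b-1$ automatically since $J\subsetneq\{1,\dots,b-1\}\cup\{b\}$ already forces $a\le b-1$; when $b\in J$ the new parameters are $a-1\le b-2$). So the induction closes. Note that the cyclic-shift symmetry of Exercise~\ref{ex:cyclic} is what makes this work uniformly: a priori the Muir/column-deletion reductions single out the index $b$, but since $\AA(\Sigma_{a,b})=\AA(\Sigma_{a,b}^i)$ for all $i$, there is no loss of generality in privileging~$b$.

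The main obstacle I anticipate is purely bookkeeping rather than conceptual: one must be careful that the inductive hypothesis is applied to the \emph{correct} smaller Pl\"ucker ring and that the embeddings of Lemma~\ref{lem:subpattern} and its Remark really do land $\AA(\Sigma_{a-1,b-1})$ (resp.\ $\AA(\Sigma_{a,b-1})$) \emph{inside} $\AA(\Sigma_{a,b})$ as subalgebras — this is exactly the content of the seed-subpattern notion from Definition~\ref{def:clustersubalgebra}, which guarantees that cluster variables of a subpattern are cluster variables (hence elements) of the ambient cluster algebra. Once that compatibility is invoked correctly, the only substantive input beyond it is Theorem~\ref{th:FFT}, used at the very end to upgrade ``all $P_J$ lie in $\AA(\Sigma_{a,b})$'' to the ring inclusion $R_{a,b}\subset\AA(\Sigma_{a,b})$.
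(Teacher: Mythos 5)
Your overall strategy is the one the paper uses: by Theorem~\ref{th:FFT} it suffices to put each Pl\"ucker coordinate into $\AA(\Sigma_{a,b})$, and the coordinates $P_{I\cup\{b\}}$ are handled by induction via the Muir embedding and Lemma~\ref{lem:subpattern}. (The paper inducts on $a$ alone, with base case $a=2$ coming from the type~$A$ analysis of $\Gr_{2,b}$, and then finishes all remaining $J$ in one stroke using Exercise~\ref{ex:cyclic}.) Where you diverge is in trying to handle the case $b\notin J$ by the column-deletion reduction, and that step as written is wrong.

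The problem is that the isomorphism in the Remark after Lemma~\ref{lem:subpattern} does not send $P_I$ to $P_I$. If you compare labels, a rectangle $r=i\times j$ inside the $a\times(b-a)$ rectangle carries the label $J(r)=\{b-a-j+1,\dots,b-a-j+i\}\cup\{b-a+i+1,\dots,b\}$, whereas the same rectangle inside the $a\times(b-1-a)$ rectangle carries the label $J(r)-1$; for instance the empty rectangle is labeled $567$ in $Q_{3,7}$ but $456$ in $Q_{3,6}$. So the ring embedding $R_{a,b-1}\to R_{a,b}$ realizing that subpattern is $P_I\mapsto P_{I+1}$, and your inductive step in the $b\notin J$ case actually produces $P_{J+1}\in\AA(\Sigma_{a,b})$, i.e.\ it covers the subsets avoiding~$1$, not those avoiding~$b$. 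Combined with the Muir case you therefore still miss every $J$ with $1\in J$ and $b\notin J$ (e.g.\ $P_{135}$ in $R_{3,7}$). The fix is precisely your closing observation, promoted from a parenthetical to the actual argument: once all $P_{I\cup\{b\}}$ are known to be obtainable from $\Sigma_{a,b}$ by mutation, Exercise~\ref{ex:cyclic} shows that the shifted seeds $\Sigma_{a,b}^i$ lie in the same seed pattern and that mutation sequences commute with the relabeling $J\mapsto (J+i)\bmod b$, so every cyclic shift $P_{(I\cup\{b\})+i}$ --- hence every Pl\"ucker coordinate --- lies in $\AA(\Sigma_{a,b})$. With that step made explicit, the column-deletion case is unnecessary and your proof collapses to the paper's.
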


\begin{proof}
Since the Pl\"ucker coordinates $P_J$ generate the Pl\"ucker ring~$R_{a,b}$,
it suffices to show that each $P_J$ lies in the cluster algebra
$\mathcal{A}(\Sigma_{a,b})$.

We will prove this claim by induction on $a$. 
The base case $a=2$ holds from our earlier analysis of $\Gr_{2,b}$.
By induction, for any $(a-1)$-element
subset $I$ of $\{1,2,\dots, b-1\}$, there is a sequence of mutations
that we can apply to $\Sigma_{a-1,b-1}$ to obtain the Pl\"ucker coordinate
$P_I$.  By Lemma \ref{lem:subpattern},
we can apply the same sequence of mutations to 
$\Sigma_{a,b}$ to obtain the Pl\"ucker coordinate $P_{I \cup \{b\}}$.
Consequently all Pl\"ucker coordinates of the form
$P_{I \cup \{b\}}$ belong to the cluster algebra~$\mathcal{A}(\Sigma_{a,b})$.  
It then follows by Exercise~\ref{ex:cyclic} 
that \emph{all} Pl\"ucker coordinates in $R_{a,b}$ lie in~$\mathcal{A}(\Sigma_{a,b})$.  
\end{proof}

\begin{proof}[Proof of Theorem \ref{th:Grassmain}]
In view of \cref{lem:PluckerInCluster}, it remains to show that 
$\mathcal{A}(\Sigma_{a,b})\subset R_{a,b}$.
By the Starfish lemma, all we need to establish is that 
mutating at any vertex $P_J$ of the quiver $Q_{a,b}$ 
yields a cluster variable $(P_J)'$ which is coprime to~$P_J$.

The mutable vertices of $Q_{a,b}$ all have degree $4$ or~$6$.
If we mutate at a degree $4$ vertex of $Q_{a,b}$, then the
corresponding exchange relation is a $3$-term Grassmann-Pl\"ucker relation,
and the resulting cluster variable is a Pl\"ucker coordinate.
Since the determinant is an irreducible polynomial, the old and new cluster variables are coprime.

If we mutate at a degree $6$ vertex, the resulting cluster 
variable is not a Pl\"ucker coordinate; however 
one can use an argument similar to that from Section
\ref{sec:rings-baseaffine} to prove that the old and new
cluster variables are still coprime.  
In this case, our degree $6$ vertex is labeled by 
some Pl\"ucker coordinate $P_{ijkS}$, 
where the subset $S\subset\{1,\dots,b\}$ of size $a-3$ is disjoint from $\{i,j,k\}$. 
The exchange relation has the form 
$$P_{ijkS} P'_{ijkS} = P_{ikfS} P_{ijdS} P_{jkeS} + P_{ikdS} P_{ijeS} P_{jkfS},$$
where the subset $\{d,e,f\}\subset\{1,\dots,b\}$ is disjoint from $\{i,j,k\} \cup S$.
One can then check that 
$P'_{ijkS} = P_{ikfS} P_{jdeS} - P_{jkdS} P_{iefS}$. 

We need to show that $P_{ijkS}$ and $P'_{ijkS}$ are coprime.  
Since the determinant is an irreducible polynomial, 
the only way $P_{ijkS}$ and $P'_{ijkS}$ can fail to be coprime is if 
$P_{ijkS}$ divides~$P'_{ijkS}$.  Let us show that this cannot happen.
Let $z$ be a generic $3 \times b$ matrix; let us augment it to an 
$a \times b$ matrix $\hat{z}$ by adding new rows $4$ through $a$,
where the submatrix located in rows $4\dots a$ and columns $S$
is the identity, and all other entries in rows $4\dots a$ are $0$.
	Then $P_{ijkS}(\hat{z})$ divides~$P'_{ijkS}(\hat{z})$ implies
	that $P_{ijk}(z)$ divides~$P'_{ijk}(z)$. 
If we specialize $z_{1d}=z_{1e}=z_{2d}=z_{2e}=0$, 
\linebreak[3]
then  $P_{ijk}$ is unchanged whereas 
$P_{jde}$ becomes~$0$, $P_{jkd}$ becomes~$z_{3,d} \, \Delta_{12,jk}(z)$,
and $P_{ief}$ becomes~$-z_{3,e}\, \Delta_{12,if}$.
Thus $P'_{ijk}$ specializes to $z_{3,d} \, \Delta_{12,jk}(z) \,z_{3,e} \, \Delta_{12,if}$.
Now if $P_{ijk}$ divides $P'_{ijk}$ then the same is true after specialization.
But $P_{ijk}$ does not divide
$z_{3,d} \, \Delta_{12,jk}(z)\, z_{3,e} \, \Delta_{12,if}$.  
Thus $P_{ijkS}$ and $P'_{ijkS}$ are coprime, and we are done. 
\end{proof}

Typically, the cluster structure in a  Pl\"ucker ring $R_{a,b}$ is of infinite type.  
The few exceptional cases where it has finite
type are listed in Table~\ref{tab:clustertype2}.

\begin{table}[ht]
\begin{center}
\begin{tabular}{| l | l |}
\hline
Ring & Cluster type \\
\hline 
\hline
& \\[-.4cm]
$R_{2,b}$ and $R_{b-2,b}$ & $A_{b-3}$ \\[.1cm] \hline
& \\[-.4cm]
$R_{3,6}$ & $D_4$ \\[.1cm] \hline
& \\[-.4cm]
$R_{3,7}$ and $R_{4,7}$ & $E_6$ \\[.1cm] \hline
& \\[-.4cm]
$R_{3,8}$ and $R_{5,8}$  & $E_8$ \\[.1cm] \hline
& \\[-.4cm]
$R_{a,b}$ for other $a$, $b$  & infinite type \\ \hline
\end{tabular}
\vspace{.2cm}
\end{center}
\caption{The type of the cluster structure of $R_{a,b}$.}
\label{tab:clustertype2}
\vspace{-.2cm}
\end{table}

\begin{remark}
\label{rem:non-plucker}
It is natural to seek an explicit description for all cluster and coefficient variables 
for the cluster structure in~$R_{a,b}$ described above. 
As we have seen, this set contains all Pl\"ucker coordinates. 
In the cases $a=2$ and $a=b-2$, there is nothing else;
in all other cases, the list includes non-Pl\"ucker cluster variables. 
For the finite types listed in \cref{tab:clustertype2},
the formulas for non-Pl\"ucker variables were given in~\cite{scott}. 
Beyond finite type, the problem remains open. 
The case $a=3$ was extensively studied~in~\cite{fomin-pylyavskyy}. 
\end{remark}

\begin{remark}
The above construction  can be adapted to yield a cluster structure 
in the coordinate ring $\CC[\operatorname{Mat}_{a \times (b-a)}]$
of the affine space of $a\times (b-a)$ matrices.
Append an identity matrix to the right of an \hbox{$a \times (b-a)$} matrix~$z$
to obtain an $a \times b$ matrix~$z'$. 
Up to $\SL_a$ action, the only restriction on~$z'$ is that its 
$a\times a$ minor occupying the last $a$ columns is equal~to~1.  
We can now identify the minors of~$z$ 
with the maximal minors of~$z'$ (the Pl\"ucker coordinates):
a Pl\"ucker coordinate $P_J\in R_{a,b}$ 
corresponds to the minor $\varphi(P_J) = \Delta_{KL}(z)\in \CC[\operatorname{Mat}_{a \times (b-a)}]$ 
whose row and column sets are given by 
\begin{align*}
K&=(\{b-a+1,b-a+2,\dots,b\}\setminus J)-b+a, \\
L&=J \cap \{1,2,\dots, b-a\};
\end{align*}
here the notation $S-c$ means $\{s-c\mid  s\in S\}$.
Given a seed for $R_{a,b}$, applying the map $\varphi$ to all cluster and coefficient variables
(except for the coefficient variable $P_{b-a+1,b-a+2,\dots,b}$) 
yields a seed for $\CC[\operatorname{Mat}_{a \times b}]$.

\nopagebreak

In the special case $b=2a$, this identification shows that 
the cluster structures in the rings
$\CC[\operatorname{Mat}_{a \times a}]$ and $\CC[\SL_a]$
introduced in Section~\ref{sec:rings-matrices} 
are very closely related to the cluster structure in the Pl\"ucker ring~$R_{a,2a}$.
\end{remark}




The constructions presented in Sections~\ref{sec:rings-baseaffine}--\ref{sec:plucker-rings}
can be generalized and modified to build cluster structures in many other rings 
naturally arising in the context of classical invariant theory as well as algebraic Lie theory. 
We already mentioned generalizations and extensions to other semisimple Lie groups,
their subgroups, parabolic quotients, and double Bruhat cells;
the excellent survey~\cite{gls-survey-Lie} describes the state of the art circa~2013. 

To keep  our exposition within reasonable bounds, 
we did not discuss the constructions of cluster structures in the rings of $\SL_k$ invariants 
of collections of vectors, covectors, 
and/or matrices \cite{carde, fomin-pylyavskyy, fomin-pylyavskyy-pnas}. 
Likewise, we left out the treatment of the Fock-Goncharov configuration spaces \cite{fock-goncharov-ihes,fock-goncharov-dual} and related topics of higher Teichm\"uller theory. 



\section{Defining cluster algebras by generators and relations}
\label{sec:generators+relations}

One traditional way of describing a commutative algebra~$\AA$ (say over~$\CC$) 
is in terms of generators and relations. 
In this approach, $\AA$~is represented as a quotient of a $\CC$-algebra
$\CC[\zz]=\CC[z_1,z_2,\dots]$ 
freely generated by a (finite or countable) set of ``variables'' $\zz=\{z_1,z_2,\dots\}$ 
modulo an explicitly given ideal~$I\subset \CC[\zz]$. 
In typical applications, the set~$\zz$ is finite (so that $P=\CC[\zz]$ is a polynomial ring) 
and the ideal $I$ is finitely generated: $I=\langle g_1,\dots,g_N\rangle$,
where $g_1,\dots,g_N$ are polynomials in the variables~$z_1,z_2,\dots$. 
(By common abuses of terminology, we identify polynomials $f\in\CC[\zz]$ 
with the elements of $\AA\cong\CC[\zz]/I$ they represent. 
We also conflate the polynomials $g\in I$ 
with the relations $g(z_1,z_2,\dots)=0$ holding in~$\AA$.) 

The definition of a cluster algebra (Definition~\ref{def:cluster-algebra}) is set up differently:  
a~cluster algebra~$\AA$ is defined inside a field~$\mathcal{F}$  
of rational functions in several variables 
as the algebra generated by certain (recursively determined) elements of~$\mathcal{F}$,
the cluster variables of~$\AA$. 
While the relations among these generators are not given explicitly, 
we do know some of them, namely the exchange relations~\eqref{eq:exch-rel-geom}. 

It is natural to try to extract from this definition 
a traditional-style description of a cluster algebra as a quotient of a polynomial ring.
This runs into two issues. 
First, the set of cluster variables is typically infinite. 
Second, the exchange relations do not, in general, generate the ideal of~all relations
among cluster variables. 
We will discuss these two issues one by~one. 

The following statement, provided here without  proof, shows that some cluster algebras
are \emph{not} finitely generated: 

\begin{proposition}[{\cite[Theorem~1.26]{ca3}}] 
\label{pr:ca3-finite-gen}
A cluster algebra of rank~$3$ with trivial coefficients  
is finitely generated if and only if it has an acyclic seed. 
\end{proposition}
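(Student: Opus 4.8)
The plan is to prove the two directions separately; the implication ``finitely generated $\Rightarrow$ acyclic seed exists'' is the substantive one. The converse implication, ``acyclic seed exists $\Rightarrow$ finitely generated'', is not special to rank~$3$ and follows from the general theory of acyclic cluster algebras: if some seed $(\mathbf{x},B)$ in the mutation class is acyclic, with cluster $\mathbf{x}=(x_1,\dots,x_n)$ and $m=n$ (trivial coefficients), then by \cite{ca3} $\mathcal{A}$ coincides with the corresponding upper cluster algebra and is generated, as a $\mathbb{C}$-algebra, by the $2n$ elements $x_1,\dots,x_n,x_1',\dots,x_n'$, where $x_i'$ is the cluster variable obtained from $(\mathbf{x},B)$ by the single mutation~$\mu_i$; so $\mathcal{A}$ is finitely generated. (For $n=3$ one could instead argue by hand: after relabeling, an acyclic quiver on three vertices is ``totally ordered'', and a short induction along the $3$-regular exchange graph shows that the subalgebra generated by those six cluster variables is already closed under every exchange relation.)

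For the converse I would argue by contraposition: suppose that no seed in the mutation class is acyclic, and show that $\mathcal{A}$ is not finitely generated. With trivial coefficients a rank-$3$ seed is encoded by a $3\times 3$ skew-symmetrizable integer matrix; I would present the skew-symmetric (quiver) case in detail, the general case being analogous. Let $Q$ be the quiver. The only non-acyclic orientation of a spanning subgraph of a triangle is the directed $3$-cycle with all three edges present, so every quiver mutation-equivalent to $Q$ is such a $3$-cycle, with multiplicities $(p,q,r)$, all $\ge 1$. A short case analysis shows that if some quiver in the class has an arrow-multiplicity equal to~$1$, then one or two mutations lead to a quiver with a source or a sink, hence to an acyclic one. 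So, under our hypothesis, \emph{every} quiver mutation-equivalent to $Q$ is a directed $3$-cycle all of whose multiplicities are $\ge 2$. (By the classification of such ``mutation-cyclic'' quivers one moreover has $p^2+q^2+r^2-pqr\le 4$, but this will not be needed.)

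Next I would grade $\mathcal{A}$: assign to the initial cluster variables the weights given by the essentially unique nonzero vector in $\ker\tilde B$, which for $Q(p,q,r)$ can be chosen with positive entries. Each exchange relation $x_kx_k'=\prod_i x_i^{[b_{ik}]_+}+\prod_i x_i^{[-b_{ik}]_+}$ is then homogeneous, so all cluster variables and all cluster monomials are homogeneous, and $\mathcal{A}$ is a $\mathbb{Z}_{>0}$-graded domain with $\mathcal{A}_0=\mathbb{C}$. Running an infinite non-backtracking mutation path out of the initial seed, and using that all arrow multiplicities stay $\ge 2$, I would show that the cluster variables it produces have pairwise distinct $\mathbf{g}$-vectors --- indeed, their $\mathbf{g}$-vectors escape to infinity --- and are therefore $\mathbb{C}$-linearly independent (for instance by comparing their dominant Laurent monomials in the initial cluster). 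If $\mathcal{A}$ were finitely generated, then by Proposition~\ref{pr:fin-gen-by-cl-var} it would be generated by finitely many cluster variables, all contained in clusters within a bounded distance $R_0$ of the initial seed, and the contradiction should come from showing that a cluster variable sufficiently far along the path is \emph{not} a polynomial in those finitely many generators.

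That last step is the main obstacle. In the prototypical Markov case $(2,2,2)$ it is immediate: the grading above is the ordinary total degree, and \emph{all} cluster variables then turn out to be homogeneous of one and the same degree, so $\mathcal{A}_1$ contains the infinitely many (linearly independent) cluster variables and $\dim_{\mathbb{C}}\mathcal{A}_1=\infty$, which is impossible for a finitely generated graded $\mathbb{C}$-algebra with $\mathcal{A}_0=\mathbb{C}$. For the remaining non-mutation-acyclic $3\times 3$ quivers the degrees of cluster variables genuinely grow, so one must instead run a quantitative version of the argument --- estimating denominator vectors (equivalently, the growth of the Laurent expansions) along the mutation path, so as to rule out that a distant cluster variable lies in the subalgebra generated by the nearby ones. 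This is exactly where both non-acyclicity and the restriction to rank~$3$ are used in an essential way, and it is carried out in \cite{ca3}.
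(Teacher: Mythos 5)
Note first that the paper does not prove this proposition: it is explicitly ``provided here without proof'' and attributed to \cite[Theorem~1.26]{ca3}, so there is no in-text argument to compare yours against; the assessment below is of your proposal on its own terms.

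Your ``acyclic seed $\Rightarrow$ finitely generated'' direction is fine and is exactly Theorem~\ref{th:acyclic-finite-gen} of this chapter. For the converse, the setup is sensible: reducing to the case where every quiver in the mutation class is an oriented $3$-cycle with all arrow multiplicities at least~$2$ is correct (cf.\ Proposition~\ref{pr:BBH}, though your ``one or two mutations'' claim is looser than the actual case analysis), and grading by a positive vector in $\ker \tilde B$ is the right device. The Markov case $(2,2,2)$ then closes, \emph{provided} you also justify that the infinitely many distinct cluster variables in $\AA_1$ are linearly independent --- distinct denominator vectors plus positivity of Laurent expansions, or the proper-Laurent-monomial argument; this is a genuine input, not a formality. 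The real gap, however, is the one you name yourself: for every non-Markov mutation-cyclic class the degrees of cluster variables grow without bound, so the ``infinite-dimensional graded piece'' contradiction evaporates, and one must instead prove that a cluster variable far along a non-backtracking mutation path is not a polynomial in the finitely many cluster and coefficient variables within a bounded distance of the initial seed. That quantitative estimate is the entire content of the theorem in those cases, and deferring it to \cite{ca3} leaves the proposal as an outline rather than a proof. (A further loose end: the proposition concerns arbitrary rank-$3$ seeds with trivial coefficients, hence skew-symmetrizable exchange matrices, and ``the general case being analogous'' would need to be checked, since both the kernel grading and the mutation-cyclicity analysis change outside the quiver case.)
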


In the terminology of Example~\ref{ex:mutation-acyclic}, 
a cluster algebra defined by a 3-vertex quiver~$Q$ with no frozen vertices 
is finitely generated if and only if $Q$ is mutation-acyclic. 

To illustrate Proposition~\ref{pr:ca3-finite-gen}, the cluster algebra defined by the Markov quiver 
(see Figure~\ref{fig:markov-quiver}) is not finitely generated. 
The following result, combined with  Proposition~\ref{pr:ca3-finite-gen}, 
provides many more examples. 

\begin{proposition}[{\cite[Theorem~1.2]{Beineke-Brustle-Hille}}]
\label{pr:BBH}
Let $Q(a,b,c)$ denote the quiver with
vertices $1, 2, 3$ and $a+b+c$ arrows: 
$a$ arrows $1\to 2$,
$b$ arrows $2\to 3$, and 
$c$ arrows $3\to 1$. 
(See Figure~\ref{fig:rank3-cyclic}.) The following are equivalent:
\begin{itemize}[leftmargin=.2in]
\item 
the quiver $Q(a,b,c)$ is not mutation-acyclic; 
\item 
$a,b,c\ge 2$ and 
$\det\left(\begin{smallmatrix} 2 & a & c \\
a & 2 & b\\
c & b & 2
\end{smallmatrix}\right)\ge 0$. 
\end{itemize}
 
\end{proposition}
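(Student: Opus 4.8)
The plan is to reduce the whole question to elementary arithmetic on the triple $(a,b,c)$. View $Q(a,b,c)$ as a $3$-cycle on vertices $1,2,3$ whose edges $\{1,2\},\{2,3\},\{3,1\}$ carry multiplicities $a,b,c$. First I would record the effect of one seed mutation: mutating at the vertex incident to the $a$- and $b$-edges reverses those two edges and changes the multiplicity of the remaining edge from $c$ to $ab-c$; if $ab-c>0$ the result is again a $3$-cycle, with triple $(a,b,ab-c)$, and if $ab-c\le 0$ the result is acyclic (no $2$-cycles ever arise here), and likewise cyclically for the other two vertices. Consequently $Q(a,b,c)$ is \emph{not} mutation-acyclic if and only if the orbit of $(a,b,c)$ under the three Vieta-type moves $(a,b,c)\mapsto(a,b,ab-c)$, $(a,b,c)\mapsto(a,ac-b,c)$, $(a,b,c)\mapsto(bc-a,b,c)$ consists entirely of triples of positive integers. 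The crucial observation, verified by a one-line computation, is that the Markov-type quantity $C(a,b,c)=a^2+b^2+c^2-abc$ is invariant under each of these moves; since $\det\left(\begin{smallmatrix}2&a&c\\a&2&b\\c&b&2\end{smallmatrix}\right)=2\,(4+abc-a^2-b^2-c^2)$, the stated determinant condition is exactly $C(a,b,c)\le 4$.

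For the implication ``not mutation-acyclic $\Rightarrow$ $a,b,c\ge 2$ and $C\le 4$'', note that all quivers in the mutation class are then $3$-cycles, so I would pick among the triples they realise one, $(p,q,r)$ with $p\le q\le r$, minimizing $p+q+r$. Its three Vieta neighbours $(qr-p,q,r)$, $(p,pr-q,r)$, $(p,q,pq-r)$ are again positive triples, of total $\ge p+q+r$, which forces $qr\ge 2p$, $pr\ge 2q$, $pq\ge 2r$; in particular $pq\ge 2r\ge 2q$ gives $p\ge 2$, hence $p,q,r\ge 2$. Writing $r=\tfrac{pq}{2}-s$ with $s\ge 0$, a short computation gives $C(p,q,r)=p^2+q^2-\tfrac{p^2q^2}{4}+s^2$, and substituting the bound $s\le\tfrac{q(p-2)}{2}$ coming from $q\le r$ yields $C(p,q,r)\le p^2+q^2(2-p)$, which is $\le 4$ when $p=2$ and $\le 0$ when $p\ge 3$. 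By invariance, $C(a,b,c)=C(p,q,r)\le 4$. To see $a,b,c\ge 2$: the vertex relabellings of $Q(a,b,c)$ (cyclic rotations together with the orientation-reversal) realise the full symmetric group on the coordinates, so if some coordinate were $1$ we could assume $c=1$ and $a\le b$; then mutating at the vertex incident to the $a$- and $c$-edges replaces $b$ by $ac-b=a-b\le 0$, producing an acyclic quiver --- a contradiction.

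For the reverse implication, assume $a,b,c\ge 2$ and $C(a,b,c)\le 4$ but, for contradiction, that $Q(a,b,c)$ is mutation-acyclic; fix a shortest mutation path to an acyclic quiver, so every quiver on it except the last is a $3$-cycle. I would show by induction along the path that all of these $3$-cycles have coordinates $\ge 2$: a Vieta step from a triple with all entries $\ge 2$ can only output $0$ by rendering the quiver acyclic (impossible before the last step), and can only output $1$ if $C$ equals $C(p,q,1)=p^2+q^2+1-pq$ for some $p,q\ge 2$, which is $\ge 5$ and contradicts invariance of $C$. Hence the last $3$-cycle $(p,q,r)$ has $p,q,r\ge 2$; but the final mutation makes it acyclic, so after relabelling $r\ge pq$, whence $C(p,q,r)=p^2+q^2+r(r-pq)\ge p^2+q^2\ge 8>4$ --- contradicting $C\le 4$. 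This contradiction proves $Q(a,b,c)$ is not mutation-acyclic, and the equivalence follows.

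I expect the main obstacle to be the descent argument in the second paragraph: one must take the minimum over \emph{all} $3$-cycles in the mutation class, invoke ``not mutation-acyclic'' exactly at the point where it guarantees each Vieta neighbour is still a positive triple, and then combine the inequalities $pq\ge 2r$, $q\le r$, $p\ge 2$ correctly to extract $C\le 4$ without mishandling the equality cases (e.g.\ the Markov quiver $(2,2,2)$, where $C=4$). Once the accompanying ``no coordinate ever drops to $1$'' lemma is in place, the reverse implication is comparatively routine.
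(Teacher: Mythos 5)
The paper does not prove this proposition at all: it is quoted verbatim from Beineke--Br\"ustle--Hille as a black box, so there is nothing in the text to compare your argument against. Your proof is, as far as I can check, complete and correct, and it is essentially the standard ``Markov constant'' argument for rank-$3$ cyclic quivers: the translation of mutation into the three Vieta moves is right, the invariance of $C(a,b,c)=a^2+b^2+c^2-abc$ and the identity $\det=2\bigl(4-C(a,b,c)\bigr)$ check out, the minimal-sum descent (using $pq\ge 2r$, $q\le r$ to force $p\ge 2$ and then $C\le p^2+q^2(2-p)\le 4$) is sound including the equality case $(2,2,2)$, and the reverse direction correctly combines the ``no entry ever drops to $1$'' lemma (via $C(p,q,1)=p^2+q^2-pq+1\ge 5$ for $p,q\ge 2$) with the final estimate $C(p,q,r)\ge p^2+q^2\ge 8$ when $r\ge pq$. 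Two cosmetic points: the parenthetical ``no $2$-cycles ever arise'' is misleading --- the $ab$ new arrows do form $2$-cycles with the $c$ old ones, and their cancellation is precisely what produces $ab-c$; and in the induction of the last paragraph ``can only output $0$'' should read ``can only output a value $\le 0$'', since a negative value also renders the quiver acyclic. Neither affects the validity of the argument.
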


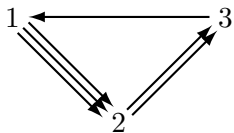
\begin{figure}[ht]
\setlength{\unitlength}{2pt} 
\begin{picture}(40,25)(0,0) 
\thicklines 
\put(2,18){\vector(1,-1){16}} 
\put(3,19){\vector(1,-1){16}} 
\put(1,17){\vector(1,-1){16}} 
\put(21.5,2.5){\vector(1,1){16}} 
\put(22.5,1.5){\vector(1,1){16}} 
\put(37,20){\vector(-1,0){34}} 

\put(0,20){\makebox(0,0){$1$}}
\put(20,0){\makebox(0,0){$2$}}
\put(40,20){\makebox(0,0){$3$}}

\end{picture} 

\caption{The 3-vertex quiver $Q(a,b,c)$ with $a=3$, $b=2$, $c=1$.}
\label{fig:rank3-cyclic}
\end{figure}

Various cluster algebras arising in Lie theory,
such as the ones discussed in 
Sections~\ref{sec:rings-baseaffine}, \ref{sec:rings-matrices}, and \ref{sec:plucker-rings},
are finitely generated, for the reasons given in 
Theorems~\ref{th:hilbert}, \ref{th:hilbert-dolgachev}, and~\ref{th:A^U-fin-generated}. 

Another class of finitely generated cluster algebras is provided by the following result,
stated here without  proof. 

\begin{theorem}[{\cite[Corollary~1.21]{ca3}}]
\label{th:acyclic-finite-gen}
Any cluster algebra defined by an acyclic quiver
(with no frozen vertices) is finitely generated. 
In fact, it is generated by the cluster variables belonging to the initial acyclic seed 
together with the cluster variables obtained from this seed by a single mutation;
the ideal of relations among these cluster variables 
is generated by 
the exchange relations 
out of the initial acyclic seed. 
\end{theorem}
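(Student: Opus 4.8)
The plan is to identify $\AA$ with its \emph{lower bound}. Write $\xx=(x_1,\dots,x_n)$ for the initial cluster attached to the acyclic quiver~$Q$, with exchange matrix $B=(b_{ik})$ and ambient field $\FFcal=\CC(x_1,\dots,x_n)$, and let $x_k'$ be the cluster variable produced from~$\xx$ by the single mutation $\mu_k$, so that
\[
x_k x_k' = M_k^+ + M_k^-, \qquad M_k^+ = \prod_{b_{ik}>0} x_i^{\,b_{ik}}, \qquad M_k^- = \prod_{b_{ik}<0} x_i^{\,-b_{ik}};
\]
since $Q$ has no loops, neither monomial involves $x_k$. I would let $\mathcal{L}\subseteq\FFcal$ be the subalgebra generated by $x_1,\dots,x_n,x_1',\dots,x_n'$, and let $\overline{\AA}\subseteq\FFcal$ be the upper cluster algebra, the intersection of the Laurent rings $\CC[y_1^{\pm1},\dots,y_n^{\pm1}]$ taken over all clusters $(y_1,\dots,y_n)$ of~$\AA$. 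One always has $\mathcal{L}\subseteq\AA\subseteq\overline{\AA}$: the first inclusion is the definition of $\mathcal{L}$, and the second is the Laurent Phenomenon (Theorems~\ref{thm:Laurent} and~\ref{th:Laurent-sharper}). So it is enough to prove $\mathcal{L}=\overline{\AA}$; then $\AA=\mathcal{L}$ is generated by the $2n$ elements $x_1,\dots,x_n,x_1',\dots,x_n'$, giving the first two assertions.

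The hard part --- and the step I expect to be the main obstacle --- is exactly the equality $\mathcal{L}=\overline{\AA}$ for an acyclic seed. Here I would follow the argument of \cite{ca3}, expanding every function as a Laurent polynomial in the fixed cluster~$\xx$. A topological sort of~$Q$ (a linear order on $\{1,\dots,n\}$ along which every arrow increases) induces a term order on Laurent monomials in~$\xx$, and with respect to it one checks: (i) the \emph{standard monomials} $x^a(x')^b:=\prod_i x_i^{a_i}\prod_j (x_j')^{b_j}$, where $a,b\in\ZZ_{\ge 0}^n$ with $\operatorname{supp}(a)\cap\operatorname{supp}(b)=\varnothing$, have pairwise distinct leading terms, hence are linearly independent and in fact form a $\CC$-basis of $\mathcal{L}$; and (ii) any Laurent polynomial in~$\xx$ that lies simultaneously in each of the $n$ Laurent rings $\CC[\mu_k(\xx)^{\pm1}]$ attached to the adjacent clusters must be a $\CC$-combination of these standard monomials, and so lies in~$\mathcal{L}$. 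I expect the bookkeeping in (i)--(ii), which tracks how the relevant leading terms behave under the mutations $\mu_k$ and crucially uses the acyclic order, to be the genuinely delicate point.

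Granting $\AA=\mathcal{L}$, the presentation by generators and relations follows from a straightening argument. Let $\mathcal{P}=\CC[z_1,\dots,z_n,z_1',\dots,z_n']$, let $\pi\colon\mathcal{P}\twoheadrightarrow\AA$ be the surjection with $\pi(z_i)=x_i$ and $\pi(z_i')=x_i'$, set $I=\ker\pi$, and let $I_0\subseteq I$ be generated by the $n$ binomial exchange relations $z_k z_k'-\overline{M}_k^+-\overline{M}_k^-$, where $\overline{M}_k^{\pm}$ is $M_k^{\pm}$ with each $x_i$ rewritten as $z_i$. First I would show that $\mathcal{P}/I_0$ is spanned by standard monomials: if a monomial $\prod_i z_i^{a_i}\prod_j (z_j')^{b_j}$ has some index $k\in\operatorname{supp}(a)\cap\operatorname{supp}(b)$, then replacing the factor $z_k z_k'$ by $\overline{M}_k^++\overline{M}_k^-$ yields a sum of two monomials of strictly smaller total degree in the primed variables, so this rewriting terminates and leaves a $\CC$-combination of standard monomials. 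Then, because the images $\pi(x^a(x')^b)$ of the standard monomials are linearly independent in $\AA=\mathcal{L}$ by step~(i), any $f\in I$ --- which is congruent modulo $I_0$ to some combination $g$ of standard monomials with $\pi(g)=\pi(f)=0$ --- must have $g=0$, hence $f\in I_0$. Thus $I=I_0$, i.e.\ the ideal of relations among $x_1,\dots,x_n,x_1',\dots,x_n'$ is generated by the exchange relations out of the initial acyclic seed, completing the proof.
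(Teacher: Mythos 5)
The paper does not actually prove this theorem (it is ``stated here without proof,'' with a citation to \cite[Corollary~1.21]{ca3}), so I am judging your argument against the proof in that reference, which your proposal essentially reconstructs: lower bound $\mathcal{L}$, upper bound, standard monomial basis, and a straightening argument for the presentation. The architecture is right, and your final step --- deducing $I=I_0$ from $\AA=\mathcal{L}$ together with the linear independence of standard monomials --- is correct and is exactly how the presentation is derived there.

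There is, however, a genuine gap in your step (ii). The inclusion $\mathcal{U}(\Sigma)\subseteq\mathcal{L}(\Sigma)$, where $\mathcal{U}(\Sigma)$ is the intersection of the Laurent rings attached to the initial cluster and its $n$ neighbours, holds for acyclic seeds only under the additional hypothesis that the seed is \emph{coprime}, i.e.\ that the exchange binomials $M_1^++M_1^-,\dots,M_n^++M_n^-$ are pairwise coprime; that hypothesis is not automatic for acyclic quivers with no frozen vertices. Concretely, take the quiver on $\{1,2,3\}$ with arrows $3\to1$ and $3\to2$. It is acyclic, but $x_1x_1'=x_2x_2'=x_3+1$, so the seed is not coprime, and the element
\[
u\;=\;\frac{x_3+1}{x_1x_2}\;=\;\frac{x_1'}{x_2}\;=\;\frac{x_2'}{x_1}\;=\;\frac{1+x_1x_2+x_3'}{x_1x_2x_3'}
\]
lies in all four Laurent rings $\CC[\xx^{\pm1}]$, $\CC[\mu_1(\xx)^{\pm1}]$, $\CC[\mu_2(\xx)^{\pm1}]$, $\CC[\mu_3(\xx)^{\pm1}]$, yet $u\notin\mathcal{L}$: substituting $x_1\mapsto tx_1$, extracting the coefficient of $t^{-1}$, dividing by $x_3+1$ and setting $x_3=-1$ shows that no $\CC$-combination of standard monomials can equal $u$. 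So claim (ii) is false as stated, and your route to $\mathcal{L}=\overline{\AA}$ collapses (note also that you defined $\overline{\AA}$ as an intersection over \emph{all} clusters but then only use the adjacent ones). The theorem survives because one only needs $\AA\subseteq\mathcal{L}$, not $\mathcal{U}\subseteq\mathcal{L}$. The standard repair is to pass to the seed with principal coefficients, which \emph{is} coprime (each exchange polynomial is linear in its own frozen variable with coprime monomial coefficients, so distinct ones cannot share a factor), run your steps (i)--(ii) there to get $\AA_\bullet=\mathcal{L}_\bullet=\mathcal{U}_\bullet$, and then specialize the frozen variables to $1$, which carries cluster variables to cluster variables and $\mathcal{L}_\bullet$ onto $\mathcal{L}$. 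With that detour inserted, the remainder of your argument goes through.
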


Theorem~\ref{th:acyclic-finite-gen} was extended in~\cite{Muller-locally-acyclic} 
to the much larger class of ``locally acyclic'' cluster algebras. 

\begin{remark}
Any cluster algebra of finite type is finitely generated. 
This follows from the appropriate generalization of Theorem~\ref{th:acyclic-finite-gen}:
a cluster algebra of finite type always has an acyclic seed of the kind described in
Theorem~\ref{th:finite-type-classification}. 
(In the quiver case, the quiver  at such a seed is an orientation of the corresponding Dynkin diagram.) See \cite[Remark~1.22]{ca3}. 

Many cluster algebras of infinite type (and even infinite mutation type) 
are finitely generated. For example, any Pl\"ucker ring $R_{a,b}$ is finitely generated
whereas its cluster structure is typically of infinite type, see Table~\ref{tab:clustertype2}. 
\end{remark}

When thinking about finite generation, it is helpful to keep in mind 
that by \cref{pr:fin-gen-by-cl-var}, 
a cluster algebra~$\AA$ is finitely generated if and only if $\AA$ is generated by a finite subset of cluster and coefficient variables. 

We next turn to the problem of describing the ideal of relations satisfied by 
a set of generators of a cluster algebra. 
Even when a cluster algebra is of finite type, this is a delicate issue,
as Examples~\ref{ex:A_T-B2} and~\ref{ex:Gr(3,6)} below demonstrate. 

\pagebreak[3]

\begin{example}
\label{ex:A_T-B2}
Let $\AA=\AA(1,2)$ be the cluster algebra of type~$B_2$ with trivial coefficients. 
In the notation of Example~\ref{example:A(1,2)}, 
$\AA$ is generated by the 6-periodic sequence of cluster variables 
$z_1,z_2,\dots$ satisfying the exchange relations 
\begin{align}
\label{eq:z1z3-B2}
z_1z_3 - z_2^2-1=0,\\
\label{eq:z2z4-B2}
z_2z_4 - z_3-1=0, \\
\label{eq:z3z5-B2}
z_3z_5 - z_4^2-1=0,\\
\label{eq:z4z6-B2}
z_4z_6 - z_5-1=0,\\
\label{eq:z5z1-B2}
z_5z_1 - z_6^2-1=0,\\
\label{eq:z6z2-B2}
z_6z_2 - z_1-1=0. 
\end{align}
It turns out that these relations do \emph{not} generate the ideal of all relations satisfied by $z_1,\dots,z_6$. 
It is not hard to check (using the formulas in Example~\ref{example:A(1,2)}) that 
these cluster variables also satisfy the relations
\begin{align}
\label{eq:z1z4-B2}
z_1z_4-z_2-z_6=0, \\ 
\label{eq:z3z6-B2}
z_3z_6-z_4-z_2=0, \\ 
\label{eq:z2z5-B2}
z_5z_2-z_6-z_4=0, 
\end{align}
none of which lies in the ideal generated by \eqref{eq:z1z3-B2}--\eqref{eq:z6z2-B2}
inside the polynomial ring in six formal variables $z_1,\dots,z_6$. 
\end{example}

The last claim, like several others made below in this section, 
can be readily checked using any of the widely available software packages 
for commutative algebra.

\begin{example}
\label{ex:Gr(3,6)}
Consider the Pl\"ucker ring $R_{3,6}=\CC[\widehat\Gr(3,6)]$,
viewed as a cluster algebra of finite type~$D_4$,
as explained in Section~\ref{sec:plucker-rings}. 
The set of its cluster variables contains the Pl\"ucker coordinates~$P_{ijk}$. 
This cluster algebra is graded, with $\deg(P_{ijk})=1$ for all $i,j,k$. 
All cluster variables are homogeneous elements, 
and all relations among them are homogeneous as well. 
These relations in particular include the Grassmann-Pl\"ucker relation
\begin{equation}
\label{eq:Plucker36}
P_{135} P_{246}-P_{134}P_{256}-P_{136}P_{245}-P_{123}P_{456} = 0.
\end{equation}
The relation \eqref{eq:Plucker36} cannot be written as a polynomial 
combination of exchange relations, since all those relations have degree at least~2, 
and none of them involves the monomial $P_{135} P_{246}$. 
Thus the ideal of relations among the cluster variables of~$R_{3,6}$ 
is \emph{not} generated by the exchange relations. 

This example can be extended to bigger Grassmannians using Muir's Law (Proposition~\ref{prop:Muir}).
\end{example}

On the bright side, the rings discussed in Examples~\ref{ex:A_T-B2} and~\ref{ex:Gr(3,6)} 
do have ``nice'' explicit presentations. 
The ideal of relations among the Pl\"ucker coordinates is generated by the classical (quadratic) Grassmann-Pl\"ucker relations. 
As to the cluster algebra $\AA(1,2)$ from Example~\ref{ex:A_T-B2}, 
the ideal of relations among the six cluster variables $z_1,\dots,z_6$ 
is generated by (the left-hand sides of) 
the relations \eqref{eq:z1z3-B2}--\eqref{eq:z6z2-B2} and \eqref{eq:z1z4-B2}--\eqref{eq:z2z5-B2}. 

In \cref{thm:clusterradical} below,
we will provide a general description of 
a finitely generated cluster algebra~$\AA$ 
in terms of generators and relations. 
This will require some preparation. 

\begin{definition}
\label{def:T-subalgebra}
Recall that a cluster algebra~$\AA$ of rank~$n$ is defined by a seed pattern 
whose seeds are labeled by the vertices of the $n$-regular tree~$\TT_n$,
cf.\ Definition~\ref{def:Tn}.
Let $T$ be a finite subtree of~$\TT_n$. 
For $i\in\{1,\dots,n\}$, let $T[i]$ denote the forest obtained from~$T$
by removing the edges labeled by~$i$. 
We denote by $\zz_T$ the (finite) set of formal variables which includes 
\begin{itemize}[leftmargin=.2in]
\item 
one formal variable for each coefficient variable of~$\AA$, and
\item
	one formal variable for each connected component of $T[i]$, 
for every $i\in\{1,\dots,n\}$. 
\end{itemize}
The formal variable 
associated to a connected component $C$ of $T[i]$ 
naturally corresponds to the unique cluster variable in~$\AA$
that is indexed by~$i$ within each of the labeled seeds in~$C$.

We denote by $\CC[\zz_T]$ the ring of polynomials in the set of variables~$\zz_T$. 
The \emph{exchange ideal} $I_T\subset \CC[\zz_T]$ 
is the ideal generated by the exchange relations corresponding to the edges of~$T$.
More precisely, for each exchange relation $zz'=M_1+M_2$
corresponding to an edge of $T$ (here $z,z'\in\zz_T$, and $M_1,M_2$ are monomials
in the elements of~$\zz_T$), 
the exchange ideal $I_T$ contains the polynomial $zz'-M_1-M_2$. 

Let $\mathcal{Z}_T$ denote the set of all cluster and coefficient variables appearing in the seeds 
labeled by the vertices of~$T$. 
Let $\AA_T\subset\AA$ be the subalgebra generated by~$\mathcal{Z}_T$. 
We are especially interested in the cases where $\AA_T=\AA$, 
so that $\mathcal{Z}_T$~generates the entire cluster algebra~$\AA$. 

In what follows, we habitually use the same notation for a formal variable $z\in\zz_T$
and the corresponding cluster variable $z\in\mathcal{Z}_T$. 
When this abuse of notation becomes dangerously confusing,
we write $f(\zz_T)$ and $f(\mathcal{Z}_T)$ to distinguish between a polynomial 
$f\in\CC[\zz_T]$ and its evaluation in~$\AA_T\subset\AA$. 
\end{definition}

\begin{remark}
\label{rem:fin-gen-by-tree}
Let $\AA$ be a finitely generated cluster algebra. 
By Proposition~\ref{pr:fin-gen-by-cl-var}, $\AA$ is generated
by a finite subset~$\zz$ of cluster and coefficient variables. 
Enlarging $\zz$ if necessary, we may furthermore assume that all cluster variables in~$\zz$
come from clusters connected to each other by mutations that pass through 
clusters all of whose cluster variables belong~to~$\zz$. 
It follows that we can find a finite tree~$T$ such that $\AA_T=\AA$. 
\end{remark}


\begin{example}
\label{ex:A_T-B2-continued}
Let $\AA=\AA(1,2)$, as in Example~\ref{ex:A_T-B2}. 
We first consider the 3-vertex tree~$T$ corresponding to the following triple of clusters:
\begin{equation}
\label{eq:3-vertex-tree}
(z_1,z_2)\overunder{1}{}(z_3,z_2)\overunder{2}{}(z_3,z_4). 
\end{equation}
Then $\mathcal{Z}_T=\{z_1,z_2,z_3,z_4\}$,
in the notation of Definition~\ref{def:T-subalgebra}. 
The relations 
\begin{align}
z_6&=z_1z_4-z_2 \,,\\
\label{eq:z5=}
z_5&=z_4z_6-1=z_1z_4^2-z_2z_4-1=z_1z_4^2-z_3-2 
\end{align}
(cf.~\eqref{eq:z1z4-B2} and~\eqref{eq:z4z6-B2}) imply that 
the cluster algebra $\AA$ is generated by~$\mathcal{Z}_T$. 
This is an instance of Theorem~\ref{th:acyclic-finite-gen}:  
the four cluster variables in~$\mathcal{Z}_T$ come from the cluster $(z_2,z_3)$ and the two clusters
obtained from it by single mutations. 

The four elements of $\mathcal{Z}_T$
satisfy the exchange relations \eqref{eq:z1z3-B2} and~\eqref{eq:z2z4-B2}. 
The left-hand sides of these relations correspond to the generators 
of the exchange ideal~$I_T\subset\CC[\zz_T]$.  
One can check that this exchange ideal contains all relations satisfied by 
the cluster variables $z_1,z_2,z_3,z_4$, in agreement with Theorem~\ref{th:acyclic-finite-gen}. 
Consequently $\AA\cong \CC[\zz_T]/I_T$. 

Alternatively, consider the subtree $T$ spanning the four clusters
\begin{equation}
\label{eq:4-vertex-subtree}
(z_1,z_2)\overunder{1}{}(z_3,z_2)\overunder{2}{}(z_3,z_4)\overunder{1}{}(z_5,z_4). 
\end{equation}
Again, the set $\mathcal{Z}_T=\{z_1,\dots,z_5\}$ generates~$\AA$. 
The three exchange relations associated with the edges of~$T$ are 
\eqref{eq:z1z3-B2}, \eqref{eq:z2z4-B2}, and~\eqref{eq:z3z5-B2}. 
It turns out that the exchange ideal $I_T\subset\CC[\zz_T]$ generated by 
(the left-hand sides of) these relations does not contain 
some of the relations satisfied by the cluster variables $z_1,\dots,z_5$. 
For example, $f(\zz_T)=z_1z_4^2 -z_3-z_5-2\notin I_T$ 
even though $f(\mathcal{Z}_T)=0$ in~$\AA$, cf.~\eqref{eq:z5=}. 
Thus for this choice of a tree~$T$, we have $\AA\not\cong \CC[\zz_T]/I_T$. 
(As the exchange ideal $I_T$ is radical in this instance, the gap cannot be explained by  
the discrepancy between the ideal of an affine variety and an ideal
coming from its set-theoretic description.) 
\end{example}

To describe the relationship between the algebra~$\AA_T$ 
and the quotient $\CC[\zz_T]/I_T$, we will need the following notation. 
Let $M_T$ denote the product of all formal variables in~$\zz_T$
that correspond to the (mutable) cluster variables.
We denote by 
\begin{equation}
\label{eq:sat-I_T}
J_T = (I_T:\langle M_T\rangle^\infty) = \{f\in\CC[\zz_T] : 
	  (M_T)^a f\in I_T \text{ for some }a\}
\end{equation}
the saturation of the exchange ideal $I_T$ by the principal ideal~$\langle M_T\rangle$.
In plain language, $J_T$ consists of all polynomials that can be multiplied
by a monomial so that the product lies in the exchange ideal~$I_T$. 

\begin{theorem}
\label{thm:clusterradical}
For a polynomial $f(\zz_T)$, the following are equivalent:
\begin{itemize}[leftmargin=.2in]
\item 
$f(\mathcal{Z}_T)\!=\!0$, i.e., $f$ describes a relation among cluster variables~in~$\AA_T$;  
\item 
$f(\zz_T)$ lies in the saturated ideal~$J_T$. 
\end{itemize}
We thus have the canonical isomorphism
\begin{equation}
\label{eq:clusterradical}
\AA_T\cong \CC[\zz_T]/J_T. 
\end{equation}
\end{theorem}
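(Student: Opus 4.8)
\emph{Proof plan.} The plan is to analyze the canonical ring surjection $\pi\colon \CC[\zz_T]\twoheadrightarrow \AA_T$ sending each formal variable to the cluster or coefficient variable it labels, and to prove that $\ker\pi=J_T$; the isomorphism \eqref{eq:clusterradical} follows at once. The inclusion $J_T\subseteq\ker\pi$ is easy: the exchange relations are identities among cluster variables, so $I_T\subseteq\ker\pi$, and if $(M_T)^af\in I_T$ then $(M_T)^a\,f(\mathcal{Z}_T)=0$ in the domain $\AA_T$; since $M_T(\mathcal{Z}_T)\ne0$, this forces $f(\mathcal{Z}_T)=0$. All the content is in the reverse inclusion, and the idea is to localize at~$M_T$.

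First I would set $B'=(\CC[\zz_T]/I_T)[M_T^{-1}]$. Since a saturation is the contraction of an extended ideal from a localization, $J_T$ is precisely the kernel of the composite $q\colon\CC[\zz_T]\to\CC[\zz_T]/I_T\to B'$. On the other hand, because $I_T\subseteq\ker\pi$ and $M_T$ maps to a nonzero (hence invertible) element of the field~$\FFcal$, the map $\pi$ factors as $\pi=\tilde\pi\circ q$ for a unique ring homomorphism $\tilde\pi\colon B'\to\FFcal$. Thus it suffices to prove that $\tilde\pi$ is injective, for then $\ker\pi=\ker q=J_T$.

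To prove $\tilde\pi$ injective, fix a root vertex $v_0$ of $T$, with extended cluster $\tilde\xx_{v_0}=(x_1,\dots,x_n,x_{n+1},\dots,x_m)$ whose cluster part is $x_1,\dots,x_n$. By the Laurent phenomenon (Theorems~\ref{thm:Laurent} and~\ref{th:Laurent-sharper}), every $z\in\mathcal{Z}_T$ equals a well-defined element $\hat z$ of $L=\CC[x_1^{\pm1},\dots,x_n^{\pm1},x_{n+1},\dots,x_m]$, so $\AA_T\subseteq L\subseteq\FFcal$. The heart of the argument is the claim that in $B'$ every formal variable $z\in\zz_T$ coincides with $\hat z$, evaluated using the (invertible) images of $x_1,\dots,x_n$ and the images of $x_{n+1},\dots,x_m$. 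I would prove this by induction on the smallest tree distance $d(z)$ from $v_0$ to a seed of $T$ whose extended cluster contains~$z$. The case $d(z)=0$ (which covers all coefficient variables) is trivial. For $d(z)=\ell>0$, pick such a seed $v$ at distance $\ell$ and note that minimality of $\ell$ forces the first edge of the geodesic from $v$ to $v_0$ to carry the label $k$ for which $z=x_k^{(v)}$. The exchange relation $z\,z'=M_1+M_2$ attached to that edge lies in $I_T$; here $z'=x_k^{(v')}\in\zz_T$ is a cluster variable (hence invertible in $B'$) and $M_1,M_2$ are monomials in the remaining $x_j^{(v)}$ and in coefficient variables, all of which already occur at the closer seed $v'$, so the inductive hypothesis applies to $z'$ and to $M_1,M_2$. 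Comparing this, rewritten in $B'$, with the identity $\hat z\,\hat{z'}=\widehat{M_1}+\widehat{M_2}$ (the same exchange relation, valid in $L$) and cancelling the unit $\hat{z'}=z'$ yields $z=\hat z$ in $B'$.

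Granting the claim, $B'$ is generated over $\CC$ by the images of $x_1^{\pm1},\dots,x_n^{\pm1},x_{n+1},\dots,x_m$ together with $M_T^{-1}$; writing $L'$ for the image of the natural map $L\to B'$, this says $B'=L'[M_T^{-1}]$ with $M_T\in L'$. Since an extended cluster is algebraically independent in $\FFcal$, the map $L\to\FFcal$ is injective; as it factors through $L'$, the surjection $L\to L'$ is an isomorphism, so $L'$ is a domain embedded in the field~$\FFcal$. Because $M_T$ maps to an invertible element of $\FFcal$, the localization $B'=L'[M_T^{-1}]$ also embeds into $\FFcal$, so $\tilde\pi$ is injective and the proof is complete. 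I expect the induction behind the key claim — in particular verifying that every variable occurring in the relevant exchange relation is ``closer to $v_0$'', so that the inductive hypothesis genuinely applies — to be the main technical point; everything else (the saturation-as-contraction identity, the algebraic independence of an extended cluster, and localizing an embedding of a domain into a field) is routine.
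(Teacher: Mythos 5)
Your proof is correct, but it follows a genuinely different route from the one in the text. The paper argues entirely inside $\CC[\zz_T]$: it fixes a root $t_0$, linearly orders $\zz_T$ by distance to the root, and runs a double induction (on the largest variable $z$ occurring in $f$, then on $\deg_z f$), replacing $f$ by $f'=z'f-Eg$ where $E=zz'-M_1-M_2$ is the exchange relation pairing $z$ with its ``parent'' $z'$; this explicitly manufactures the monomial multiplier $M$ as a product of such parents, and the only outside input is the algebraic independence of the root extended cluster (for the base case). You instead identify $J_T$ as the kernel of $\CC[\zz_T]\to B'=(\CC[\zz_T]/I_T)[M_T^{-1}]$ and prove that $B'$ embeds into $\FFcal$, by showing inductively (on tree distance to the root) that each formal variable already equals its Laurent expansion $\hat z$ inside $B'$ --- your observation that minimality of the distance forces the first edge of the geodesic to carry the label of $z$, so that $z'$, $M_1$, $M_2$ live at the closer seed, is exactly the point that makes the induction close, and cancelling the unit $z'$ in $B'$ is legitimate since every mutable variable divides $M_T$. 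The two inductions are cousins (both walk exchange relations toward the root), but yours is structural rather than constructive: it invokes the Laurent phenomenon, which the paper's proof does not need, and in exchange it delivers the stronger statement that $(\CC[\zz_T]/I_T)[M_T^{-1}]$ is isomorphic to a subring of the Laurent ring $L$ --- in effect an ``upper bound'' computation in the spirit of \cite{ca3} --- from which the theorem falls out by contracting kernels. The paper's argument is more elementary and exhibits the saturating monomial explicitly; yours explains conceptually \emph{why} saturation at $M_T$ is the right operation.
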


Informally, a polynomial in the cluster variables vanishes in the cluster algebra
if and only if this polynomial can~be multiplied by some monomial 
in cluster variables so that the product lies in the exchange ideal. 

\begin{remark}
If the cluster algebra~$\AA$ is finitely generated, with $\AA=\AA_T$
(cf.\ \cref{rem:fin-gen-by-tree}), then \eqref{eq:clusterradical} provides an implicit presentation 
of~$\AA$ in terms of generators and relations. 
Furthermore, in each specific example, the saturated ideal $J_T$ can be explicitly computed 
using existing efficient algorithms of computational commutative algebra. 
\end{remark}

Before proving \cref{thm:clusterradical}, we illustrate it with a couple of examples. 

\begin{example}
\label{ex:4-vertex-subtree-B2}
Continuing with Example~\ref{ex:A_T-B2-continued}, 
let $\AA=\AA(1,2)$. 
Take the tree~$T$ shown in~\eqref{eq:4-vertex-subtree}. 
We saw that the polynomial $f=z_1z_4^2 -z_3-z_5-2$ does not lie in the exchange ideal~$I_T$,
even though $f$ describes an identity among the generators of~$\AA$.
On the other hand,
\begin{align*}
z_3f
&=z_3(z_1z_4^2 -z_3-z_5-2)\\
&=z_4^2(z_1z_3\!-\!z_2^2\!-\!1)\!+\!(z_2z_4\!+\!z_3\!+\!1)(z_2z_4\!-\!z_3\!-\!1)\!-\!(z_3z_5\!-\!z_4^2\!-\!1)
\in I_T, 
\end{align*}
so $f$ lies in the saturated ideal~$J_T$. 
\end{example}

\pagebreak[3]

\begin{example}
\label{ex:Gr36}
Continuing with Example~\ref{ex:Gr(3,6)}, 
consider the Pl\"ucker ring~$R_{3,6}$.
Although the Grassmann-Pl\"ucker relation \eqref{eq:Plucker36}
does not lie in the ideal generated by the exchange relations, 
we \emph{can} multiply \eqref{eq:Plucker36} by a monomial
(in fact, by a single variable) and get inside the ideal:
\begin{align}
\notag
       P_{124}(P_{135} P_{246}&-P_{134}P_{256}-P_{136}P_{245}-P_{123}P_{456}) \\
                &=P_{246} (P_{124}P_{135}-P_{123}P_{145}-P_{125}P_{134}) \label{3term1}\\
                &-P_{134} (P_{124}P_{256}-P_{125}P_{246}+P_{126}P_{245}) \label{3term2} \\
                &-P_{245} (P_{124}P_{136} - P_{123}P_{146}-P_{126}P_{134}) \label{3term3}\\
                &-P_{123} (P_{124}P_{456}-P_{145}P_{246}+P_{146}P_{245})\in I_T. \label{3term4}
        \end{align}
(Each of the four parenthetical expressions in \eqref{3term1}--\eqref{3term4} 
is a three-term Grassmann-Pl\"ucker relation, 
thus an instance of an exchange relation.) 
\end{example}

\begin{proof}[Proof of \cref{thm:clusterradical}]
Going in one direction, let us verify that if $f(\zz_T)\in J_T$, then 
$f(\mathcal{Z}_T)=0$. 
Suppose that $M(\zz_T)$ is a monomial such that $f(\zz_T)M(\zz_T)\in I_T$. 
Since every exchange relation holds when we substitute cluster variables into it, 
this implies that $f(\mathcal{Z}_T)M(\mathcal{Z}_T)=0$ in~$\AA_T$.
But $\AA_T$ is contained in a field~$\mathcal{F}$, and $M(\mathcal{Z}_T)$
is a nonzero element of~$\mathcal{F}$.
Therefore $f(\mathcal{Z}_T)=0$ as desired. 

To prove the converse, we will need the following definitions. 
Fix a root vertex $t_0$ in the tree~$T$. 
Let us linearly order the set $\zz_T$ so that
\begin{itemize}[leftmargin=.2in]
\item 
the coefficient variables and the variables associated with the root cluster
are smaller than the remaining variables; 
\item 
for each exchange relation $zz'=\cdots$, we have $z'<z$ if the ``cluster'' containing~$z'$
is closer to $t_0$ in~$T$ than the ``cluster'' containing~$z$. 
\end{itemize}
(We put the word ``cluster'' in quotation marks since we are dealing with formal variables 
rather than the associated cluster variables.) 

Let $f=f(\zz_T)$ be a polynomial such that $f(\mathcal{Z}_T)=0$.
We need to show that there exists a monomial $M\in\CC[\zz_T]$ such that $fM\in I_T$.
We will prove this by double induction: 
first, on the largest variable~$z\in\zz_T$ appearing in~$f$, 
and for a given~$z$, on the degree with which $z$ appears. 
In other words, we will argue as follows. 
Let $z\in\zz_T$ be the largest variable appearing in~$f$, say with degree~$d$.
Then we can assume, while proving the claim above, 
that a similar statement holds for any polynomial
that only involves the variables  smaller than~$z$, 
and perhaps also~$z$ in degrees~$<d$. 

We begin by writing
\[
f=zg+h, 
\]
where $g,h\in\CC[\zz_T]$ are polynomials, with $h$ not involving~$z$. 
Let 
\[
E=zz'-M_1-M_2 \in I_T 
\]
be the polynomial associated with the unique exchange relation 
among the variables in~$\zz_T$  that
corresponds to an edge in~$T$ and where $z'<z$. 
Thus $M_1,M_2\in\CC[\zz_T]$ are monomials 
that only involve variables smaller than~$z$.

Now set 
\[
f'=z'f-Eg.
\] 
Then $f'(\mathcal{Z}_T)=0$ because $f(\mathcal{Z}_T)=E(\mathcal{Z}_T)=0$.
Moreover the calculation 
\begin{align*}
f'&=z'f-Eg\\
&=zz'g+z'h-(zz'-M_1-M_2)g\\
&= z'h+(M_1+M_2)g 
\end{align*}
shows that $f'$ satisfies the conditions of the induction hypothesis. 
(Indeed, the polynomials $z'$, $h$, $M_1$ and $M_2$ only involve variables~$<z$ 
whereas $\deg_z(g)=\deg_z(f)-1$.) 
Hence there exists a monomial $M'=M'(\zz_T)$ satisfying $f'M'\in I_T$. 
Now let $M=M'z'$ and conclude that 
\[
fM = M'z'f=M'f'+M'Eg\in I_T, 
\]
proving the claim. 
\end{proof}

\backmatter


\bibliographystyle{acm}
\bibliography{bibliography}
\label{sec:biblio}


\end{document}